\newcommand{\pD}[2]{\frac{\partial #1}{\partial #2}}
\newcommand{\rD}[2]{\frac{d #1}{d #2}}
\newcommand{\vn}[1]{\lVert#1\rVert}
\newcommand{\IP}[2]{\left< #1 , #2 \right>}
\newcommand{\sfrac}[2]{\text{\fontsize{5}{5}\selectfont$\frac{#1}{#2}$}}
\newcommand{\R}{\ensuremath{\mathbb{R}}}
\newcommand{\N}{\ensuremath{\mathbb{N}}}
\newcommand{\SA}{\ensuremath{\mathcal{A{}}}}
\newtheorem{thm}[section]{Theorem}{\bf}{\it}
\newtheorem{cor}[section]{Corollary}{\bf}{\it}
\newtheorem{prop}[section]{Proposition}{\bf}{\it}
\newtheorem{lem}[section]{Lemma}{\bf}{\it}
\newtheorem*{rmk}{Remark}{\bf}{\rm}
\begin{document}

\title{Surface diffusion flow near spheres}
\author{Glen Wheeler}

\address{School of Mathematics and Applied Statistics,
         University of Wollongong,
         Northfields Ave,
         Wollongong, NSW 2500, Australia}

\keywords{global differential geometry\and fourth order\and geometric analysis\and surface diffusion
flow\and higher order partial differential equations}
\subjclass{53C44\and 58J35}

\begin{abstract}
We consider closed immersed hypersurfaces evolving by surface diffusion flow,
and perform an analysis based on local and global integral estimates.
First we show that a properly immersed stationary ($\Delta H \equiv  0$) hypersurface in $\R^3$ or
$\R^4$ with restricted growth of the curvature at infinity and small total tracefree curvature must
be an embedded union of umbilic hypersurfaces.
Then we prove for surfaces that if the $L^2$ norm of the tracefree curvature is globally initially
small it is monotonic nonincreasing along the flow.
We also derive pointwise estimates for all derivatives of the curvature assuming that its $L^2$ norm
is locally small.
Using these results we show that if a singularity develops the curvature must concentrate in a
definite manner, and prove that a blowup under suitable conditions converges to a
nonumbilic embedded stationary surface.
We obtain our main result as a consequence: the surface diffusion flow of a surface initially close
to a sphere in $L^2$ is a family of embeddings, exists for all time, and exponentially converges to
a round sphere.
\end{abstract}

\maketitle
\section{Introduction}

Let $f:M^n\times[0,T)\rightarrow\R^{n+1}$ be a family of compact immersed hypersurfaces $f(\cdot,t)
= f_t: M \rightarrow f_t(M) = M_t$ with associated Laplace-Beltrami operator $\Delta$, unit normal
vector field $\nu$, and mean curvature function $H$.  The surface diffusion flow
\begin{equation}
\label{SD}
\pD{}{t}f = (\Delta H)\nu,
\end{equation}
is the chief object of interest for this paper.  Equation \eqref{SD} is a fourth order degenerate
quasilinear parabolic evolution problem, the local existence of which is now standard in the
literature; see \cite{escher98surface} for example.  Our goal in this paper is to establish
regularity and stability results analagous to the pioneering work of Kuwert \& Sch\"atzle
\cite{kuwert2001wfs,kuwert2002gfw}.

The hallmark geometric characteristics of surface diffusion flow are the following:
using $\text{Vol }M_t$ to denote the volume enclosed by $M_t$ in $\R^{n+1}$ we compute
\begin{align}
\rD{}{t}\text{Vol }M_t &= \int_{M} \Delta Hd\mu = 0
\text{, and}
\label{eqvolcomp}
\\
\rD{}{t}\int_{M} d\mu &= \int_{M} H\Delta H d\mu = -\int_M |\nabla H|^2 d\mu \le 0;
\label{eqsurfareacomp}
\end{align}
so that a manifold evolving by \eqref{SD} will exhibit conservation of enclosed volume and monotonic
decreasing surface area.  Further, surface area is preserved exactly when the mean curvature of
$M_t$ is constant.  This is similar to mean curvature flow (with normal velocity $\partial^\perp_tf
= -H$) where surface area is monotonically decreasing, and stationary if $M_t$ is a minimal surface.
As mean curvature flow may be used as a tool in studying minimal surfaces, surface diffusion
flow may be used to study surfaces of constant mean curvature with a prescribed volume.  These
properties make surface diffusion flow one of the most natural fourth order flows one can consider
(with the other candidate being Willmore flow, where $\partial^\perp_tf = \Delta H + H|A^o|^2$), and
a model problem to be well studied before moving on to more general evolution equations.

Since one may derive \eqref{SD} by considering the $H^{-1}$-gradient flow for the area functional
(see Fife \cite{fife2000mps}), one may suspect that a program of study similar to that presented by
Kuwert \& Sch\"atzle \cite{kuwert2001wfs,kuwert2002gfw} on the gradient flow for the Willmore
functional is not approriate.  Indeed, the natural reaction is to guess that surface diffusion flow
has more in common with mean curvature flow than Willmore flow.  With this in mind, an immediate
goal is to establish a lower bound on the surface area of the evolving hypersurface.  This would
follow from the computation \eqref{eqvolcomp} and the isoperimetric inequality, so long as we can
show that if $f_0$ is an embedding, then $f_t$ is also an embedding.  The crucial point is to prove
(Proposition \ref{propMonotoneCurvature}) that under assumption
\eqref{eqS1maintheoremsmallnessrequirement} below the curvature is monotonically nonincreasing along
the flow.  Combining this with a result from Li \& Yau \cite{LY82} finally allows us to conclude
that embeddedness is preserved for surface diffusion flows satsifying
\eqref{eqS1maintheoremsmallnessrequirement}, and that the surface area is uniformly bounded away
from zero.

With our analysis in this paper we affirm that the general technique of localised integral estimates
and blowup analysis from \cite{kuwert2001wfs,kuwert2002gfw} is effective for surface diffuson flow.
Our main result is the following theorem, analagous to Theorem 5.1 in \cite{kuwert2001wfs}.

\begin{thm}
\label{thmS1maintheorem}
There exists an absolute constant $\epsilon_0>0$ such that if $f:M^2\times[0,T)\rightarrow\R^3$ is a
surface diffusion flow with 
\begin{equation}
\int_M|A^o|^2d\mu\bigg|_{t=0} \le \epsilon_0 < 8\pi
\label{eqS1maintheoremsmallnessrequirement}
\end{equation}
then $T=\infty$, $f:M^2\times[0,\infty)\rightarrow\R^3$ is a family of embeddings, and
$M_t\rightarrow S^2$ exponentially, where $S^2$ is a round sphere.
\label{maintheoremSDflow}
\end{thm}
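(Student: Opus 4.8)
\emph{Proof proposal.} The strategy is to lean on the four pillars established earlier: the monotonicity of the tracefree energy (Proposition~\ref{propMonotoneCurvature}), the Li--Yau inequality \cite{LY82}, the interior estimates for the curvature valid under local $L^2$-smallness together with the attendant concentration dichotomy, and a blowup argument feeding into the rigidity theorem for stationary surfaces. Exponential convergence is then extracted by linearising \eqref{SD} at the limiting sphere.

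\textbf{Topology, embeddedness, and area bounds.} For a closed surface in $\R^3$ the Gauss equation and the Gauss--Bonnet theorem combine to give $\tfrac{1}{4}\int_M H^2\,d\mu=\tfrac{1}{2}\int_M|A^o|^2\,d\mu+2\pi\chi(M)$. Hence \eqref{eqS1maintheoremsmallnessrequirement} forces $\tfrac{1}{4}\int_M H^2\,d\mu\big|_{t=0}<8\pi$; since each component of a closed immersed surface in $\R^3$ carries Willmore energy at least $4\pi$, $M$ must be connected, and then the identity forces $\chi(M)>0$, i.e.\ $\chi(M)=2$, so $M$ is a topological $2$-sphere. By the Li--Yau inequality \cite{LY82} the strict bound $\tfrac{1}{4}\int_M H^2\,d\mu<8\pi$ makes $f_0$ an embedding. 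Proposition~\ref{propMonotoneCurvature} (whose hypothesis is exactly \eqref{eqS1maintheoremsmallnessrequirement}) keeps $\int_M|A^o|^2\,d\mu$ nonincreasing, hence keeps the Willmore energy strictly below $8\pi$ for every $t\in[0,T)$, so $f_t$ remains an embedding throughout. Conservation of enclosed volume \eqref{eqvolcomp} and the isoperimetric inequality then yield a uniform positive lower bound for $\mathrm{Area}(M_t)$, while \eqref{eqsurfareacomp} supplies the upper bound $\mathrm{Area}(M_t)\le\mathrm{Area}(M_0)$ together with $\int_0^T\!\!\int_M|\nabla H|^2\,d\mu\,dt<\infty$.

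\textbf{Global existence.} Suppose $T<\infty$. The concentration result forces a definite amount of curvature to collect inside arbitrarily small balls about some $x_0\in\R^3$ as $t\uparrow T$. Rescaling $f$ parabolically about $(x_0,T)$ along scales chosen to retain a fixed amount of curvature in unit balls, and invoking the interior estimates for $\nabla^kA$, I extract a smooth limit $\hat f\colon\hat M\to\R^3$. Since $\dim M=2$ the tracefree energy $\int|A^o|^2\,d\mu$ is scale invariant, so \eqref{eqS1maintheoremsmallnessrequirement} passes to $\hat f$; the interior estimates control the curvature growth of $\hat f$ at infinity, and $\hat f$ is properly immersed and stationary ($\Delta H\equiv0$). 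The rigidity theorem for stationary surfaces then forces $\hat f(\hat M)$ to be an embedded union of umbilic surfaces --- planes and round spheres --- contradicting the fact that the blowup is set up so that $\hat f$ is nontrivial and nonumbilic. Hence $T=\infty$. I expect this to be the main obstacle: arranging a ``good'' rescaling sequence for which the limit exists, is nontrivial, and is genuinely nonumbilic; carrying out the concentration-compactness without losing control to a non-compact limit or to several bubbles; and checking that a single constant $\epsilon_0$ can be chosen small enough to make Proposition~\ref{propMonotoneCurvature} apply, to keep the Willmore energy under $8\pi$, and to fall inside the smallness range of the rigidity theorem at once.

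\textbf{Convergence.} With the flow now global, the interior estimates and the bounds above give uniform control of $|A|$ and all its derivatives for $t\ge1$. Picking $t_k\to\infty$ with $\int_M|\nabla H|^2\,d\mu\to0$ (possible by the global gradient estimate) and translating each $M_{t_k}$ so that its enclosed barycentre is the origin --- legitimate because conservation of volume together with the area and curvature bounds forces a uniform bound on $\dext M_t$ --- I extract a $C^\infty$ limit $M_\infty$ with $\nabla H\equiv0$, so $H$ is constant; being embedded, $M_\infty$ is a round sphere by Alexandrov's theorem, of radius pinned down by $\mathrm{Vol}\,M_\infty=\mathrm{Vol}\,M_0$. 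To upgrade subconvergence to exponential convergence of the full flow, for large $t$ I write $M_t$ as a normal graph of small height over this sphere and combine \eqref{eqsurfareacomp} with a Poincar\'e-type inequality valid on surfaces $C^1$-close to a round sphere, obtaining $\tfrac{d}{dt}\int_M|A^o|^2\,d\mu\le-\delta\int_M|A^o|^2\,d\mu$; equivalently one linearises \eqref{SD} at the sphere --- the linearised operator being nonpositive with kernel spanned by the dilation and the three translation modes, the former removed by volume conservation and the latter by the recentering --- and invokes the spectral gap on the orthogonal complement. Either route gives exponential decay of $\int_M|A^o|^2\,d\mu$, and bootstrapping through the interior estimates promotes this to exponential convergence $M_t\to S^2$ in $C^\infty$.
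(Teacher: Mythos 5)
Your proposal reproduces the paper's overall architecture faithfully (monotonicity of $\int_M|A^o|^2\,d\mu$, Li--Yau for embeddedness, concentration dichotomy from the lifespan theorem, blowup plus gap lemma for global existence, subconvergence to spheres, exponential upgrade), so the skeleton is right. Two points where you depart or wobble are worth flagging.

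\textbf{Exponential convergence.} You propose either writing $M_t$ as a normal graph over the limit sphere and applying a Poincar\'e-type inequality, or linearising \eqref{SD} at the sphere and using a spectral gap after quotienting out the translation and dilation modes. The paper avoids this machinery entirely: from the localised evolution estimates one extracts the improved inequality
\begin{equation*}
\rD{}{t}\int_M|A^o|^2\,d\mu + \frac{1}{100}\int_M\big(|\nabla_{(2)}A|^2 + |\nabla A|^2H^2 + |A^o|^2H^4\big)\,d\mu \le 0,
\end{equation*}
and the smooth subconvergence of Lemma~\ref{lemS4smoothconv} already furnishes an eventual lower bound $H^2\ge c_H>0$; substituting this into the term $|A^o|^2H^4$ immediately gives $\rD{}{t}\int_M|A^o|^2\,d\mu+\tfrac{c_H^2}{100}\int_M|A^o|^2\,d\mu\le0$, hence exponential decay of $\vn{A^o}_2$, and then an induction on $k$ together with Lemma~\ref{lemS2int} promotes this to exponential decay of all $\vn{\nabla_{(k)}A}_\infty$. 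This is self-contained and does not require control of the graph representation or the spectrum of the linearised operator; your route is valid but heavier.

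\textbf{Two minor gaps.} (i) Your derivation of $\tfrac14\int_MH^2\,d\mu<8\pi$ from $\int_M|A^o|^2\,d\mu<8\pi$ via the Gauss--Bonnet identity requires $\chi(M)\le2$, i.e.\ connectedness; you then invoke the Willmore bound $<8\pi$ to deduce connectedness, which is circular. The paper sidesteps this by (implicitly, as is standard) taking $M$ connected from the start, after which the chain closes as you wrote it. (ii) You list ``losing control to a non-compact limit'' as a danger in the blowup step. In fact the paper's Theorem~\ref{BUnontrivial} shows the opposite: every component of the blowup must be noncompact, because compact components would force $\mu(M_t)\to0$ under the rescaling, contradicting the uniform positive lower bound on area that comes from embeddedness (Proposition~\ref{propMonotoneCurvature}), volume conservation \eqref{eqvolcomp} and the isoperimetric inequality. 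Noncompactness is required, not feared, and it is exactly what makes the Gap Lemma bite (a properly immersed stationary nonumbilic surface with small $\int_M|A^o|^2\,d\mu$ and the required growth control cannot exist). Nontriviality, in turn, is guaranteed by Lemma~\ref{C7L1}: at least $\epsilon_1$ of curvature is captured in the unit ball at the rescaled time $t=0$.
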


Our larger aim is to instigate a systematic study of the asymptotic behaviour of the surface
diffusion flow.  One way to view the condition \eqref{eqS1maintheoremsmallnessrequirement} is that
the deviation of $f$ from being round is small in an averaged sense.  Our result here can then be
viewed as a kind of stability of spheres theorem in the $L^2$ norm.  It then becomes natural to
query on the global behaviour of $f$ when this deviation is greater than $\epsilon_0$, or even if it
is initially very large.  At this time, we do not know of any analytic example which rules out
dropping condition \eqref{eqS1maintheoremsmallnessrequirement}, or allows us to provide an upper
bound for $\epsilon_0$.  As a first step, it would be interesting to determine whether an immersed
sphere with image a symmetric figure 8, having zero enclosed volume, shrinks to a point and vanishes
in finite time.

The surface diffusion flow has been considered for some time in the literature.  First proposed by
Mullins \cite{mullins1957ttg} in 1957 (two years before he proposed the mean curvature flow), it was
originally designed to model the formation of thermal grooves in phase interfaces where the
contribution due to evaporation-condensation was insignificant.  Some time later, Davi, Gurtin, Cahn
and Taylor \cite{cahn1994sms,davi1990mpi} proposed many other physical models which give rise to
the surface diffusion flow.  These all exhibit a reduction of free surface energy and conservation
of volume; essential characteristics of surface diffusion flow.  There are also other motivations
for the study of \eqref{SD}.  For example, two years later Cahn, Elliot and Novick-Cohen
\cite{cahn1996che} proved that \eqref{SD} is the singular limit of the Cahn-Hilliard equation with a
concentration dependent mobility.  Among other applications, this arises in the modeling of
isothermal separation of compound materials.

Analysis of the surface diffusion flow began slowly. Baras, Duchon and Robert \cite{baras1984edi}
showed the global existence of weak solutions for two dimensional strip-like domains.  Later, Elliot
and Garcke \cite{elliott1997erd} analysed the surface diffusion flow of curves, and obtained local
existence and regularity for $C^4$-initial curves, and global existence for small perturbations of
circles.
Escher, Mayer and Simonett \cite{escher98surface} gave several numerical schemes for modeling
\eqref{SD}, and have also given the only two known numerical examples \cite{mayer2001nss} of
the development of a singularity: a tubular spiral and thin-necked dumbbell.
  Ito \cite{Ito1999sdf} showed that convexity will not be preserved under surface diffusion
flow, even for smooth, rotationally symmetric, closed, strictly convex initial hypersurfaces.
Quite recently, Blatt \cite{B10} generalised this and showed loss of convexity and loss of
embeddedness for a large class of flows.

There have been many important works on fourth order flows of a slightly different character,
from Willmore flow of surfaces to Calabi flow, a fourth order flow of metrics.  Significant
contributions to the analysis of these flows by the authors Kuwert, Sch\"atzle, Polden, Huisken,
Mantegazza and Chru\'sciel
\cite{chrusciel1991sge,kuwert2001wfs,kuwert2002gfw,mantegazza2002sge,polden:cas} are particularly
relevant, as the methods employed there are similar to ours here.

Simonett \cite{simonett2001wfn} used centre manifold techniques to show that the statement of
Theorem \ref{thmS1maintheorem} holds under the stronger assumption that $f_0$ is
$C^{2,\alpha}$-close to a round sphere.  Our analysis here is completely different, drawing
inspiration instead from the work of Kuwert \& Sch\"atzle \cite{kuwert2001wfs,kuwert2002gfw} on the
Willmore flow of surfaces.  The most transparent difference between surface diffusion and Willmore
flow is that one lacks the explicit structure of an $L^2$ gradient flow.
While we are able to show that under assumption \eqref{eqS1maintheoremsmallnessrequirement} we have
\[
\rD{}{t}\int_M|A^o|^2d\mu \le -\frac{1}{4}\int_M|\Delta H|^2d\mu,
\]
it is completely unknown whether or not this is true for initial data violating
\eqref{eqS1maintheoremsmallnessrequirement}.

This paper is organised as follows.
In Section 2 we derive integral estimates in the case where the tracefree curvature is locally small
in $L^n$, where $n\in\{2,3\}$.
Using these we conclude our first results, curvature estimates and a gap lemma on the stationary
solutions of \eqref{SD} satisfying a small tracefree curvature assumption and restricted growth of
the second fundamental form in $L^2$ at infinity.
The key ingredient allowing the classification result to go through in three intrinsic dimensions is
a new multiplicative Sobolev inequality.
We also outline the proof of a lifespan theorem and interior estimates,
which are analogous to Theorem 1.2 in \cite{kuwert2002gfw} and Theorem 3.5 in \cite{kuwert2001wfs}.
Section 3 contains the blowup construction we will employ, and shows that for surface diffusion
flows satisfying \eqref{eqS1maintheoremsmallnessrequirement} the blowup at a finite time curvature
singularity is a stationary, nonumbilic surface with $\Delta H \equiv 0$.
In Section 4 we show that this is in contradiction with the gap lemma, and thus obtain long time
existence for surface diffusion flows satisfying \eqref{eqS1maintheoremsmallnessrequirement}.
In Section 4 we also examine the global behaviour of such a flow and show exponential convergence to
spheres.

We have developed the exposition to be particularly relevant to the case of globally constrained
surface diffusion and Willmore flows, where the immersion evolves by
\begin{equation}
\pD{}{t}f = (\Delta H + h)\nu,\quad\text{and}\quad\pD{}{t}f = (\Delta H + H|A^o|^2 + h)\nu
\label{eqnS1csdflow}
\end{equation}
respectively, for some constraint $h:[0,T)\rightarrow\R$.  The flow is non-local, in the sense that
the motion of points on $M$ may depend upon global properties of $M$, such as total curvature,
surface area, mixed volumes, or other quantities.  Choices of $h$ are motivated with some geometric
consideration in mind, and give rise to mixed volume preserving surface diffusion flow, and surface
area, volume preserving Willmore flows for example.  With suitable structure conditions placed upon
the constraint function $h$, one may conclude analagous results to those presented here,
albeit with the distance from a sphere in a possibly higher $L^p$ norm \cite{mythesis,MWW11}.

\section*{Acknowledgements}

This work forms part of the author's PhD thesis under the support of an Australian Postgraduate
Award.  Part of this research was completed during two visits to the Freie Universit\"at in Berlin.
The first visit was under the support of the Deutscher Akademischer Austausch Dienst, and the second
supported by the Research Group in Geometric Analysis at the Freie Universit\"at under the
supervision of Prof. Dr. Klaus Ecker.  Part of this work was also completed during a visit to the
Max Planck Institut in Golm, under the supervision of Prof. Dr. Gerhard Huisken.  He is grateful
for their support and hospitality.

The author is also very grateful to the referee who carefully read the manuscript and suggested
numerous improvements and simplifications to the arguments and exposition presented in this paper.

\section{Local estimates}

This section is in two parts.  In the first part we compute integral estimates which hold for any
immersion, and use these with multiplicative Sobolev inequalities to show curvature estimates and a
gap lemma.
In the second part our primary focus is on proving estimates which will allow us to overcome our
lack of a convenient gradient flow.  There we utilise the evolution of certain integral quantities
along the flow to conclude interior estimates and a lifespan theorem, similar to Theorems 3.5 and
1.2 in \cite{kuwert2001wfs} and \cite{kuwert2002gfw} respectively.

Throughout our analysis we will make extensive use of a cutoff function $\gamma=\tilde{\gamma}\circ
f:M\rightarrow[0,1]$ with compact support in $M$ and $\tilde{\gamma} \in C^2_c(\R^3)$ which
satisfies
\begin{equation}
\vn{\nabla\gamma}_\infty \le c_{\gamma1},\quad
\vn{\nabla_{(2)}\gamma}_\infty \le c_{\gamma2}(1+|A|),
\label{eqnS2eqngamma}
\end{equation}
for some absolute constants $c_{\gamma1}, c_{\gamma2} < \infty$.
Note $\gamma$ automatically has compact support in $M$ so long as $f$ is a proper immersion.
We frequently use the notation $\vn{\cdot}_p$ for the $L^p$ norm, and $\vn{\cdot}_{p,[\gamma>0]}$
for the $L^p$ norm restricted to the set where a function $\gamma$ is positive.  We also use
$P_i^j(T)$ to denote a sum of contractions of $i$ copies of $T$ with $j$ covariant derivatives in
each contraction.


Since we feel this argument is quite robust, we will use the convention $F = \partial^\perp_tf =
\Delta H$ to indicate how one may proceed for other flows.  We begin the first part of this section
by noting some easy consequences of results from \cite{kuwert2001wfs}.

\begin{prop}
For an immersed hypersurface
$f:M^n\rightarrow\R^{n+1}$, $\gamma$ as in \eqref{eqnS2eqngamma} and $s\ge4$ we have
\begin{align*}
\int_M&|\nabla_{(2)}A|^2\gamma^sd\mu
 +  \int_M|A|^2|\nabla A|^2\gamma^sd\mu
 +  \int_M|A|^4|A^o|^2\gamma^sd\mu
\\&
\hskip-5mm\le c\int_M |F|^2\gamma^sd\mu
      + c{(c_{\gamma1})}^4\int_{[\gamma>0]}|A|^2d\mu
      + c_1\int_M (|A^o|^2|\nabla A^o|^2+|A^o|^6)\gamma^sd\mu,
\end{align*}
where $c$ and $c_1$ are functions of $n$ and $s$ only.
\label{propS2ks1}
\end{prop}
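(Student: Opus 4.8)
The plan is to integrate by parts twice starting from the term $\int_M |F|^2 \gamma^s \, d\mu = \int_M |\Delta H|^2 \gamma^s \, d\mu$ and to recognize that, modulo lower-order terms controlled by the cutoff, this quantity dominates $\int_M |\nabla_{(2)} H|^2 \gamma^s \, d\mu$. The idea is standard: $\int |\Delta H|^2 \gamma^s = \int \nabla_i\nabla_j H \, \nabla^i\nabla^j H \, \gamma^s$ after integration by parts, where the commutator terms bring in curvature times $|\nabla H|^2$ or $|\nabla H| |H| |\nabla A|$, and the derivatives landing on $\gamma^s$ produce factors of $\nabla \gamma$ and $\nabla_{(2)}\gamma$ which, by \eqref{eqnS2eqngamma}, are bounded by $c_{\gamma 1}$ and $c_{\gamma 2}(1+|A|)$ respectively. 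One then uses the Simons-type identity $\Delta A = \nabla_{(2)} H + A * A * A$ (schematically, with $P_3^0(A)$ terms) to pass from estimates on $\nabla_{(2)} H$ to estimates on $\nabla_{(2)} A$, picking up the cubic curvature terms $|A|^4 |A^o|^2$ and $|A|^2 |\nabla A|^2$ on the left-hand side, and a $P_3^0(A) * \nabla_{(2)}A$-type term that gets absorbed via Young's inequality after splitting $A = A^o + \frac{H}{n} g$.

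The key organizational step is to split every cubic-in-curvature error term into its tracefree and mean-curvature parts, $A = A^o + \tfrac{1}{n} H g$, and likewise $\nabla A = \nabla A^o + \tfrac{1}{n} \nabla H \, g$. Terms that are "pure mean curvature" — e.g. those reducing to $|H|^k$ or $|H|^j |\nabla H|^2$ — are exactly the quantities one is trying to bound, and they can be moved to the left or absorbed against $\int |F|^2 \gamma^s$ and $\int_{[\gamma>0]} |A|^2$ after applying a multiplicative Sobolev (or interpolation) inequality of the type already available from \cite{kuwert2001wfs}; this is where the $c(c_{\gamma 1})^4 \int_{[\gamma>0]} |A|^2$ term is generated, via the divergence-structure estimates for $\int |\nabla H|^2 |H|^2 \gamma^{s-?}$ etc. The genuinely tracefree terms — those containing at least the factors $|A^o|^2 |\nabla A^o|^2$ or $|A^o|^6$ (equivalently $|A^o|^4 |A^o|^2$) — are precisely the ones collected into the final $c_1 \int_M (|A^o|^2|\nabla A^o|^2 + |A^o|^6)\gamma^s$ term on the right, and are NOT absorbed; this is the point of the proposition, which trades a clean bound for the presence of an $\epsilon$-able tracefree remainder.

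First I would establish the $\nabla_{(2)}H \leftrightarrow \nabla_{(2)}A$ conversion and the basic twice-integrated-by-parts identity for $\int |\Delta H|^2 \gamma^s$. Then I would catalogue the error terms by homogeneity (they are all scaling-equivalent to $\int |\nabla_{(2)} A|^2 \gamma^s$ or lower) and sort them into three bins: absorbable into the LHS by Young's inequality with small constant (the $\nabla_{(2)}A * P_3^0(A)$ cross term); controllable by $\int |F|^2 \gamma^s + c(c_{\gamma 1})^4 \int_{[\gamma>0]}|A|^2$ after a divergence trick and a multiplicative Sobolev inequality (the pure-mean-curvature and mixed lower-order terms, using $s \geq 4$ so that enough powers of $\gamma$ survive differentiation); and the irreducible tracefree bin feeding $c_1$. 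The main obstacle is bookkeeping: ensuring that after all the commutators and integrations by parts, every term that is cubic or quartic in curvature either carries two factors of $A^o$ and one extra $A^o$-or-$\nabla A^o$ (hence lands in the $c_1$ term) or is purely mean-curvature and hence susceptible to interpolation against $\int |F|^2 \gamma^s$. The requirement $s \geq 4$ is exactly what one needs so that $\gamma^{s-4}$ remains a legitimate (bounded) weight after differentiating $\gamma^s$ up to four times, which is what keeps the $(c_{\gamma 1})^4$ factor — and no $c_{\gamma 2}$ — in the final estimate.
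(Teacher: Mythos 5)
Your proposal reconstructs the estimate from first principles, but the paper's proof is a one-liner: cite Proposition~2.4 of Kuwert \& Sch\"atzle \cite{kuwert2001wfs} (which already gives a bound of this form with a mixed term $\int H^2|A^o|^4\gamma^s\,d\mu$ on the right) and convert that term via the Young inequality \eqref{eqnS2simpleest} into $\delta\int H^4|A^o|^2\gamma^s\,d\mu$, absorbed on the left since $|A|^4 \ge H^4/n^2$, plus $c_\delta\int|A^o|^6\gamma^s\,d\mu$, which feeds the $c_1$ error. You are therefore, in effect, re-deriving the cited result from scratch, which is a legitimate route in principle, but there is a concrete flaw in your version.

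You explicitly invoke the bound $\vn{\nabla_{(2)}\gamma}_\infty \le c_{\gamma 2}(1+|A|)$ when your integrations by parts land two derivatives on $\gamma^s$. But the estimate you are proving contains no $c_{\gamma 2}$ at all, only $(c_{\gamma 1})^4$, and this is not cosmetic: the Gap Lemma (Theorem~\ref{thmS2gaplemma}) takes $\gamma = \varphi(|f|/\rho)$ and explicitly observes that ``in our estimates we do not use the second derivative of $\gamma$.'' For such a radial cutoff, $\nabla_{(2)}\gamma$ picks up a factor of $|A|$ via the Gauss formula, and no control is available on $\int_{[\gamma>0]}|A|^3$; if $c_{\gamma 2}$ appeared in the present proposition the $\rho\to\infty$ limit in the Gap Lemma would fail. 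The argument must instead be arranged so that each integration by parts puts at most one derivative on $\gamma^s$, and the resulting first-order cross terms (e.g.\ $\int \nabla H\cdot\nabla_{(2)}H\cdot\nabla\gamma^s\,d\mu$) are closed by Young's inequality without ever differentiating the cutoff twice. Relatedly, your closing claim that $s\ge 4$ ``keeps the $(c_{\gamma1})^4$ factor --- and no $c_{\gamma 2}$ --- in the final estimate'' is internally inconsistent with your earlier use of $\nabla_{(2)}\gamma$: differentiating $\gamma^s$ four times necessarily produces $\nabla_{(2)}\gamma$-terms such as $\gamma^{s-2}(\nabla_{(2)}\gamma)^2$. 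The $(c_{\gamma1})^4$ factor actually arises from repeated single-derivative integrations by parts combined with Young-type interpolation, not from a fourth-order derivative count on the cutoff.
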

\begin{proof}
Combine the obvious estimate
\begin{equation}
\int_M H^2|A^o|^4\gamma^sd\mu
 \le \delta\int_M H^4|A^o|^2\gamma^sd\mu + \frac{1}{4\delta}\int_M |A^o|^6\gamma^sd\mu,
\label{eqnS2simpleest}
\end{equation}
valid for any $\delta > 0$, and Proposition 2.4 from \cite{kuwert2001wfs}.
\qed
\end{proof}

The next four lemmas are generalisations of results from \cite{kuwert2001wfs,kuwert2002gfw} to
at least include the case $n=3$.
We include the first three with a view toward future applications, while the fourth is instrumental
in proving a gap lemma for $n=3$.
Each proof relies in a crucial way on the exponent in the
Michael-Simon Sobolev inequality \cite{michael1973sam}, and for $n\ge4$ we have not been able to
obtain a statement as useful as \eqref{msin3}.
We will omit the proof of the first three lemmas as we feel they are quite straightforward.

\begin{lem}
\label{myLZthm}
Let $f:M^n\rightarrow\R^{n+1}$ be an immersed hypersurface. For $u\in C_c^1(M)$,
$n<p\le\infty$, $0\le \beta\le \infty$ and $0<\alpha\le 1$ where $\frac{1}{\alpha} =
\big(\frac{1}{n}-\frac{1}{p}\big)\beta + 1$ we have
\begin{equation*}
  \vn{u}_\infty \le c\vn{u}_\beta^{1-\alpha}(\vn{\nabla u}_p + \vn{Hu}_p)^\alpha,
\end{equation*}
where $c = c(n,p,\beta)$.
\end{lem}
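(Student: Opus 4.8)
The plan is to derive this interpolation inequality from the Michael–Simon Sobolev inequality by an iteration (Moser-type) argument applied to powers of $|u|$. First I would recall that the Michael–Simon Sobolev inequality on an immersed hypersurface $f:M^n\to\R^{n+1}$ gives, for $v\in C^1_c(M)$,
\[
\vn{v}_{\frac{n}{n-1}} \le c(n)\big(\vn{\nabla v}_1 + \vn{Hv}_1\big).
\]
Applying this with $v = |u|^q$ for a suitable exponent $q>1$, and using $|\nabla |u|^q| \le q|u|^{q-1}|\nabla u|$ together with Hölder's inequality to split $\vn{|u|^{q-1}\nabla u}_1 \le \vn{u}_\beta^{q-1}\vn{\nabla u}_{p}$ up to an adjustment of exponents (the condition $p>n$ is exactly what makes the Hölder conjugate of $p$ large enough for the $|u|^{q-1}$ factor to land in the right $L^{\cdot}$ space while keeping a power of $\vn{u}_\beta$), one obtains a recursion bounding a higher $L^{r_{k+1}}$ norm of $u$ in terms of a lower $L^{r_k}$ norm, $\vn{\nabla u}_p$ and $\vn{Hu}_p$. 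Iterating and letting $k\to\infty$ sends $r_k\to\infty$, and a careful bookkeeping of the geometric-type product of constants and the exponents shows the limit is finite precisely when the exponent relation $\frac{1}{\alpha} = \big(\frac{1}{n}-\frac{1}{p}\big)\beta + 1$ holds; this relation is what balances the scaling so that the product telescopes rather than diverges.

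The key steps, in order, are: (1) fix the starting exponent $r_0 = \beta$ (or $r_0$ related to $\beta$) and set up the chain of exponents $r_{k+1} = \frac{n}{n-1}(r_k + q_k - 1)$ or similar, chosen so that each application of Michael–Simon with $v=|u|^{r_k/\text{(something)}}$ is legitimate; (2) at each stage apply Hölder to peel off a factor $\vn{u}_{r_k}$ raised to an appropriate power and a factor $(\vn{\nabla u}_p + \vn{Hu}_p)$ to the first power — the hypothesis $n<p$ guarantees the remaining exponent on $|u|$ is positive; (3) track the accumulated constant $C_k$, which has the form of a product $\prod c(n)^{a_k} q_k^{b_k}$ with $\sum a_k, \sum b_k < \infty$ because the $r_k$ grow geometrically; (4) absorb the $0\le\beta$ and $0<\alpha\le1$ edge cases ($\beta=\infty$ giving the plain Sobolev-to-$L^\infty$ bound, $\alpha=1$ being the borderline) by direct inspection; (5) pass to the limit to get $\vn{u}_\infty \le c\,\vn{u}_\beta^{1-\alpha}(\vn{\nabla u}_p+\vn{Hu}_p)^\alpha$ with $\alpha$ forced by the exponent identity via a dimensional-analysis check on both sides under the rescaling $u\mapsto \lambda u$ and $f\mapsto \mu f$.

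The main obstacle I anticipate is the constant-tracking in the iteration: showing that the product of Michael–Simon constants and the combinatorial factors $q_k$ accumulated over infinitely many steps converges to a finite $c=c(n,p,\beta)$, and verifying this is uniform enough to survive the degenerate endpoints $p=\infty$ and $\beta\in\{0,\infty\}$. A secondary subtlety is making the Hölder split at each stage produce exactly a \emph{first} power of $(\vn{\nabla u}_p+\vn{Hu}_p)$ rather than a growing power — this is precisely why only one derivative norm appears on the right and is the reason the specific relation among $\alpha,\beta,n,p$ is not a free choice but is dictated by the requirement that the iteration close. Once the exponent algebra is pinned down by this scaling/closure condition, the remaining estimates are routine.
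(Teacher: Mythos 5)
Your plan is the standard proof and matches what the paper leaves implicit (the lemma is credited to the exponent in the Michael--Simon inequality and the proof is omitted as ``quite straightforward,'' following the Kuwert--Sch\"atzle precedent): iterate the $L^1\to L^{n/(n-1)}$ Michael--Simon inequality on powers $|u|^{q_k}$, H\"older-split the gradient and mean-curvature terms, and telescope, then pin down $\alpha$ by scaling. One clarification worth making so the bookkeeping closes: in the H\"older step the factor $|u|^{q-1}$ lands in $L^{(q-1)p'}$, not $L^\beta$, so the recursion is $r_{k+1}=\tfrac{n}{n-1}\bigl(1+(1-\tfrac1p)r_k\bigr)$ with $r_0=\beta$; the hypothesis $p>n$ is exactly what makes the slope $\tfrac{n}{n-1}(1-\tfrac1p)$ exceed $1$ so that $r_k\nearrow\infty$ geometrically, and the relation $\tfrac1\alpha=(\tfrac1n-\tfrac1p)\beta+1$ is then read off from the telescoped exponents (equivalently from the rescaling $f\mapsto\lambda f$), rather than being a convergence condition on the infinite product of constants, which converges for every admissible $p,\beta$.
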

\begin{lem}
Suppose $n\in\{2,3\}$ and let $\gamma$ be as in \eqref{eqnS2eqngamma}.
Then for any tensor $T$ on $M^n$ and $s\ge2$ there is a $c=c(n,s)$ such that
\begin{align*}
\vn{T\gamma^s}^4_{\infty}
\le
c\vn{T\gamma^s}^{4-n}_2\bigg[
\Big(\int_M |\nabla_{(2)}T|^2\gamma^{2s}d\mu\Big)^\frac{n}{2}
 &+ \Big(\int_M H^4|T|^2\gamma^{2s}d\mu\Big)^\frac{n}{2} 
\\
&\hskip-7mm
\qquad+(c_{\gamma1})^{2n}\Big(\int_{[\gamma>0]}|T|^2d\mu\Big)^\frac{n}{2}
\bigg].
\end{align*}
\label{CieL6}
\end{lem}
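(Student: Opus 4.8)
The plan is to combine the scalar interpolation inequality of Lemma \ref{myLZthm} with an $L^2$-based Gagliardo--Nirenberg estimate for $\nabla(T\gamma^s)$, using the fact that for $n\in\{2,3\}$ the Michael--Simon Sobolev exponent allows one to close the estimate at the level of two derivatives in $L^2$. First I would apply Lemma \ref{myLZthm} with $u = T\gamma^s$ (interpreted componentwise, taking the norm of the tensor), $p=2n/(n-2)$ if $n=2$ is handled separately, so more cleanly: take $\beta=2$ and choose $p$ so that $\tfrac1\alpha = (\tfrac1n-\tfrac1p)\beta+1$. The point is that for $n\in\{2,3\}$ one can choose the parameters so that $\alpha = n/4$ (equivalently $1-\alpha = (4-n)/4$), which is exactly the exponent on $\vn{T\gamma^s}_2$ appearing in the claim. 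This reduces matters to estimating $(\vn{\nabla(T\gamma^s)}_p + \vn{H\,T\gamma^s}_p)^{n/4}$, raised to the fourth power, i.e. $(\vn{\nabla(T\gamma^s)}_p + \vn{H\,T\gamma^s}_p)^{n}$.

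Next I would bound the $L^p$ norm of $\nabla(T\gamma^s)$, where $p$ is the subcritical-type exponent dictated above (for $n=2,3$ this $p$ will satisfy $p<\infty$ and $p$ is below the relevant Sobolev exponent), by a further interpolation: $\vn{\nabla(T\gamma^s)}_p \le c\,\vn{\nabla_{(2)}(T\gamma^s)}_2^{\theta}\,\vn{\nabla(T\gamma^s)}_2^{1-\theta}$-type inequality, or more directly one applies the Michael--Simon Sobolev inequality to $\nabla(T\gamma^s)$ to pass from $L^p$ to $L^2$ of one more derivative plus an $H$-term. The Leibniz rule then expands $\nabla_{(2)}(T\gamma^s)$ and $\nabla(T\gamma^s)$ into the genuine curvature-derivative term $|\nabla_{(2)}T|\gamma^s$, plus terms carrying derivatives of $\gamma$; using \eqref{eqnS2eqngamma} the first-derivative-of-$\gamma$ terms contribute the $(c_{\gamma1})$ factors, and the second-derivative-of-$\gamma$ term is controlled by $(1+|A|)|T|\gamma^{s-2}$, whose $L^2$ norm is absorbed into the $H^4|T|^2$ integral and the $|T|^2$ integral on $[\gamma>0]$ after using $s\ge 2$ so that the powers of $\gamma$ remain nonnegative. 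Collecting, every term on the right is of one of the three stated types, and the homogeneity in $\gamma$-powers is consistent because $s\ge 2$ lets us write $\gamma^s = \gamma^{s/2}\cdot\gamma^{s/2}$ and distribute as needed.

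The main obstacle is bookkeeping the powers of $\gamma$ and the interpolation parameters so that (i) the final exponent on $\vn{T\gamma^s}_2$ is exactly $4-n$, (ii) all curvature-derivative integrals appear with weight $\gamma^{2s}$ and not a lower power, and (iii) the case $n=2$ (where $4-n=2$, $n/4=1/2$) and $n=3$ (where $4-n=1$, $n/4=3/4$) are both covered by a single choice of $p$ in Lemma \ref{myLZthm}. The constraint $n\in\{2,3\}$ enters precisely here: it is what guarantees the auxiliary exponent $p$ in the interpolation lies in the range $(n,\infty)$ for which Lemma \ref{myLZthm} and the Michael--Simon inequality give a clean closed estimate; for $n\ge 4$ the analogous $p$ would fail to be subcritical in the required sense, which is the phenomenon the authors allude to in the paragraph preceding the lemma. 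Once the exponents are pinned down, the remainder is a routine application of Young's inequality to separate the mixed $\gamma$-derivative terms into the three listed buckets.
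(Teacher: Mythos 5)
Your overall architecture is the right one — interpolate $L^\infty$ against $L^2$ via Lemma \ref{myLZthm}, then close the estimate with the Michael--Simon Sobolev inequality and the cutoff bounds \eqref{eqnS2eqngamma} — but two of the specific steps you describe would fail as written.

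First, the parameter choice. You ask for $\alpha = n/4$ with $\beta=2$; from $\frac{1}{\alpha} = (\frac{1}{n}-\frac{1}{p})\beta+1$ this forces $\frac{1}{p}=\frac{n-2}{2n}$, i.e.\ $p=\frac{2n}{n-2}$. For $n=3$ this is $p=6$, which is fine, but for $n=2$ it is $p=\infty$, contradicting your assertion that ``$p$ will satisfy $p<\infty$''. With $p=\infty$ Lemma \ref{myLZthm} only yields $\vn{u}_\infty\le c\vn{u}_2^{1/2}(\vn{\nabla u}_\infty+\vn{Hu}_\infty)^{1/2}$, and the Michael--Simon step degenerates (the exponent $\frac{2n}{n-2}$ is exactly the borderline Sobolev exponent, which for $n=2$ means no gain at all). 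For $n=2$ you must instead use, say, $\beta=4$, $p=4$, $\alpha=1/2$, which produces an intermediate $L^4$ norm, and then interpolate further ($\vn{u}_4^2\le\vn{u}_\infty\vn{u}_2$, $\vn{\nabla u}_4^2 \le c\vn{\nabla u}_2(\vn{\nabla_{(2)}u}_2 + \vn{H\nabla u}_2)$ via Michael--Simon on $|\nabla u|^2$, and $\vn{\nabla u}_2^2\le c\vn{u}_2\vn{\nabla_{(2)}u}_2$ by integration by parts) and solve for $\vn{u}_\infty$; the two cases $n=2,3$ genuinely do not share a single choice of $(\beta,p)$.

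Second, your handling of the cutoff derivatives overlooks that the stated conclusion carries only $c_{\gamma1}$, with $c=c(n,s)$ — there is no $c_{\gamma2}$ anywhere on the right. Your plan passes through $\nabla_{(2)}(T\gamma^s)$ via Leibniz, which produces a term with $\nabla_{(2)}\gamma$, hence a $c_{\gamma2}(1+|A|)$ factor from \eqref{eqnS2eqngamma}. Your claim that this is ``absorbed into the $H^4|T|^2$ integral and the $|T|^2$ integral'' does not go through: $|A|^2=|A^o|^2+\frac{1}{n}H^2$, so the offending term contains both $|A^o|^2|T|^2\gamma^{2s-4}$ (nowhere on the right-hand side) and, after Young, a $(c_{\gamma2})^4|T|^2$ contribution (also absent from the stated conclusion). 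The way to obtain a statement free of $c_{\gamma2}$ is to never differentiate $\gamma^s$ twice: apply Lemma \ref{myLZthm} to the \emph{scalar} $u=|T|\gamma^s$ and use Kato's inequality $\big|\nabla|S|\big|\le|\nabla S|$, so that $\nabla u$ carries only one $\nabla\gamma$, and then apply Michael--Simon to $|\nabla T|\gamma^s$ (again a scalar, again only one $\nabla\gamma$ under Kato). The intermediate mixed terms like $\vn{H\nabla T\gamma^s}_2$ are then absorbed into $\vn{\nabla_{(2)}T\gamma^s}_2$ and $\vn{H^2T\gamma^s}_2$ by integration by parts and Young, which is what produces the stated three buckets without ever invoking $\nabla_{(2)}\gamma$.
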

\begin{lem}
\label{lemS2int}
Let $n\in\{2,3\}$.  Then for any tensor $T$ on $f:M^n\rightarrow\R^{n+1}$ and $\gamma$
as in \eqref{eqnS2eqngamma}
there is a $c=c(c_{\gamma1},n)$ such that
\begin{equation*}
  \vn{T}^4_{\infty,[\gamma=1]}
    \le c\vn{T}^{4-n}_{2,[\gamma>0]}\big( \vn{\nabla_{(2)}T}^n_{2,[\gamma>0]}
                                   + \vn{TA^2}^n_{2,[\gamma>0]}
                                   + \vn{T}^n_{2,[\gamma>0]}\big).
\end{equation*}
\end{lem}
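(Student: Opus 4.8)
The plan is to deduce Lemma \ref{lemS2int} from Lemma \ref{CieL6} by specialising the cutoff weight and absorbing curvature terms. First I would note that both statements are scale-breaking multiplicative inequalities with the same structure, so the task is really to pass from the weighted $L^2$ quantities appearing in Lemma \ref{CieL6} (norms with $\gamma^{2s}$, and the $H^4|T|^2$ term) to the unweighted restricted norms $\vn{\cdot}_{2,[\gamma>0]}$ appearing here. On the left-hand side, since $\gamma\equiv1$ on $[\gamma=1]$, we have $\vn{T\gamma^s}_\infty \ge \vn{T}_{\infty,[\gamma=1]}$, so the left side only improves. On the right, $\gamma\le1$ gives $\gamma^{2s}\le1$, hence $\int_M|\nabla_{(2)}T|^2\gamma^{2s}d\mu \le \vn{\nabla_{(2)}T}^2_{2,[\gamma>0]}$, and similarly $\vn{T\gamma^s}_2 \le \vn{T}_{2,[\gamma>0]}$ and $\int_{[\gamma>0]}|T|^2d\mu = \vn{T}^2_{2,[\gamma>0]}$. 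The one genuinely different term is $\int_M H^4|T|^2\gamma^{2s}d\mu$: here I would bound $H^4 \le c|A|^4$ pointwise (since $|H|\le\sqrt{n}|A|$), so that $\int_M H^4|T|^2\gamma^{2s}d\mu \le c\,\vn{TA^2}^2_{2,[\gamma>0]}$, which is exactly the middle term on the right of Lemma \ref{lemS2int} (reading $TA^2$ as the contraction notation for $|T||A|^2$ up to constants).

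Carrying this out, each of the three bracketed terms in Lemma \ref{CieL6} is dominated by the $n/2$-power of one of the three terms $\vn{\nabla_{(2)}T}^2_{2,[\gamma>0]}$, $\vn{TA^2}^2_{2,[\gamma>0]}$, $\vn{T}^2_{2,[\gamma>0]}$, and the prefactor $\vn{T\gamma^s}^{4-n}_2$ is dominated by $\vn{T}^{4-n}_{2,[\gamma>0]}$. The constant $(c_{\gamma1})^{2n}$ from Lemma \ref{CieL6} gets absorbed into the claimed $c=c(c_{\gamma1},n)$, and using $(x+y+z)^{n/2}\le c(x^{n/2}+y^{n/2}+z^{n/2})$ to split the power over the sum (legitimate since $n\in\{2,3\}$ so $n/2\le 2$), one arrives at precisely the stated inequality. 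No choice of $s$ matters beyond $s\ge2$, so one simply fixes $s=2$ (or leaves $s$ arbitrary) and the dependence on $s$ disappears into $c$.

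The only mild subtlety — and the step I would flag as needing a line of care rather than being purely mechanical — is the reinterpretation of the pointwise bound $H^4|T|^2 \le c\,P^2_2(\text{stuff})$ in the paper's $P_i^j$ contraction notation: one must check that $H^4|T|^2$ is controlled by $|TA^2|^2$ in the sense intended by $\vn{TA^2}_{2,[\gamma>0]}$, i.e. that contracting $T$ with two copies of $A$ (no derivatives) produces a quantity comparable to $|T||A|^2$, which is immediate from $|H|\le\sqrt n|A|$ and the Cauchy–Schwarz bound $|T\cdot A\cdot A|\le |T||A|^2$. Everything else is monotonicity of integrals under $\gamma\le1$ and the elementary power inequality. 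I therefore expect the proof to be short; the reason the paper omits it ("we feel they are quite straightforward") is exactly that it is this kind of term-by-term domination argument with no analytic content beyond Lemma \ref{CieL6} itself.
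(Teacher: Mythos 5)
Your proof is correct and is the natural (evidently intended) deduction of Lemma \ref{lemS2int} from Lemma \ref{CieL6}: fix $s=2$, use $\gamma\le 1$ to replace each $\gamma$-weighted $L^2$ quantity by the corresponding restricted norm, and use $H^4\le n^2|A|^4$ to convert the $H^4|T|^2$ integral into $\vn{TA^2}^2_{2,[\gamma>0]}$. One small simplification worth noting: the inequality $(x+y+z)^{n/2}\le c(x^{n/2}+y^{n/2}+z^{n/2})$ is not actually needed, since the right-hand side of Lemma \ref{CieL6} is already written as a sum of $n/2$-th powers of the three integrals rather than the $n/2$-th power of their sum, so each term can be dominated individually and then summed.
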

\begin{lem}
\label{lemS2msiboth}
Suppose $\gamma$ is as in \eqref{eqnS2eqngamma}.  Then for an immersion
$f:M^2\rightarrow\R^3$ and $s\ge4$,
\begin{align}
&\int_M|A^o|^6\gamma^sd\mu + \int_M|A^o|^2|\nabla A^o|^2\gamma^sd\mu
\notag\\*
   &\quad\le  c_2\int_{[\gamma>0]}|A^o|^2d\mu
	  \int_M\big(|\nabla_{(2)}A^o|^2 + |A|^2|\nabla A^o|^2 +
                     |A|^2|A^o|^4\big)\gamma^sd\mu
\notag\\*
   &\hskip+2cm + c{(c_{\gamma1})}^4\Big(\int_{[\gamma>0]}|A^o|^2d\mu\Big)^2,\text{ and}
\label{eqnS2msi1Azero}
\\
&\int_M|A|^6\gamma^sd\mu + \int_M|A|^2|\nabla A|^2\gamma^sd\mu
\notag\\
&\quad\le  c\int_{[\gamma>0]}|A|^2d\mu\int_M(|\nabla_{(2)}A|^2 + |A|^6)\gamma^sd\mu
         + c(c_{\gamma1})^4\Big(\int_{[\gamma>0]}|A|^2d\mu\Big)^2,
\label{eqnS2msi2A}
\end{align}
and for an immersion $f:M^3\rightarrow\R^4$ and $s\ge6$,
\begin{align}
&\int_M|A^o|^6\gamma^sd\mu + \int_M|A^o|^2|\nabla A^o|^2\gamma^sd\mu
\notag\\*
 &\le
\delta\int_M|\nabla_{(2)}A^o|^2\gamma^sd\mu
+
 c_3\vn{A^o}_{3,[\gamma>0]}^\frac{3}{2}
     \int_M \big(|\nabla_{(2)} A^o|^2 + |A|^2|A^o|^4+|A^o|^6\big) \gamma^s d\mu
\notag\\
&\qquad
    + c(c_{\gamma1})^3\big(\vn{A^o}^3_{3,[\gamma>0]} + \vn{A^o}^\frac{9}{2}_{3,[\gamma>0]}\big),
\label{msin3}
\end{align}
where $\delta>0$ and $c = c(s,n)$.
\end{lem}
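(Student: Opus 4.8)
The three inequalities are all of the same flavour: starting from a local $L^2$ smallness of $A^o$ (or $A$), one absorbs the sixth-power and derivative-times-curvature terms into a ``good'' integral of second derivatives. The engine in every case is the Michael--Simon Sobolev inequality, which for $u\in C_c^1(M^n)$ reads $\vn{u}_{n/(n-1)}\le c\int_M(|\nabla u|+|Hu|)\,d\mu$; this is exactly where the dimensional restriction $n\in\{2,3\}$ enters, since the resulting exponents are only favourable enough to close the argument when $n\le 3$.

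The plan for \eqref{eqnS2msi1Azero}: apply Michael--Simon to $u=|A^o|^2\gamma^{s/2}$ on $M^2$, where $n/(n-1)=2$, to obtain $\big(\int_M|A^o|^4\gamma^s\big)^{1/2}\le c\int_M(|A^o||\nabla A^o|\gamma^{s/2}+|H||A^o|^2\gamma^{s/2}+|A^o|^2\gamma^{s/2-1}|\nabla\gamma|)\,d\mu$. Squaring, using Cauchy--Schwarz to split each product, and feeding the result into the elementary interpolation $\int_M|A^o|^6\gamma^s\le\big(\int_{[\gamma>0]}|A^o|^2\big)^{1/2}\big(\int_M|A^o|^4\gamma^s\big)^{1/2}\cdot(\text{sup-type term})$ — more precisely, writing $|A^o|^6\gamma^s=|A^o|^2\cdot|A^o|^4\gamma^s$ and applying Michael--Simon to $|A^o|^2\gamma^{s/2}$ again — produces a factor $\int_{[\gamma>0]}|A^o|^2\,d\mu$ multiplying $\int_M(|\nabla A^o|^2+|H|^2|A^o|^2)\gamma^s\,d\mu$ plus the $(c_{\gamma1})^4(\int|A^o|^2)^2$ error. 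One then rewrites $|H|^2|A^o|^2$ in terms of $|A|^2|A^o|^2$ (since $H^2\le c|A|^2$ in two dimensions) and controls $\int_M|A^o|^2|\nabla A^o|^2\gamma^s$ by the same circle of estimates, arriving at the right-hand side of \eqref{eqnS2msi1Azero}, which carries $|\nabla_{(2)}A^o|^2$, $|A|^2|\nabla A^o|^2$ and $|A|^2|A^o|^4$ — note no separate $|A^o|^6$ term survives because in $n=2$ one has $|A|^2|A^o|^4\ge c|A^o|^6$. The proof of \eqref{eqnS2msi2A} is identical with $A^o$ replaced by $A$ throughout, using $H^2\le 2|A|^2$ to convert the $|H|^2|A|^2\gamma^s$ term directly into $|A|^6\gamma^s$.

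For \eqref{msin3} on $M^3$ the exponent from Michael--Simon is $n/(n-1)=3/2$, so applying it to $u=|A^o|^p\gamma^{sp/2}$ with a suitable power $p$ yields an $L^{3p/2}$ bound, and the bookkeeping is arranged so that the curvature integral $\int_{[\gamma>0]}|A^o|^3\,d\mu=\vn{A^o}_{3,[\gamma>0]}^3$ appears as the small multiplicative factor; the exponents $3/2$, $3$ and $9/2$ on $\vn{A^o}_{3,[\gamma>0]}$ in the statement are precisely what these Hölder splittings generate. Here one cannot fully absorb the top-order term by smallness alone, so a Young-inequality step peels off $\delta\int_M|\nabla_{(2)}A^o|^2\gamma^s$ with a free small constant $\delta$, which is why \eqref{msin3} — unlike \eqref{eqnS2msi1Azero} — contains an explicit $\delta$ and a residual $|A^o|^6\gamma^s$ on the right. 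The cutoff terms are handled using \eqref{eqnS2eqngamma} exactly as in the $n=2$ case, noting that the $\vn{\nabla_{(2)}\gamma}_\infty$ bound only contributes lower-order pieces after the sup-norm estimates of Lemmas \ref{CieL6} and \ref{lemS2int} are invoked.

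The main obstacle is the dimension-three case: the borderline Sobolev exponent $3/2$ leaves no slack, so one must track the powers of $\gamma$ and the Hölder conjugates very carefully to ensure that (i) every occurrence of $|\nabla_{(2)}A^o|^2\gamma^s$ beyond the single $\delta$-term is genuinely multiplied by $\vn{A^o}_{3,[\gamma>0]}$ to some positive power, and (ii) no $\vn{\nabla_{(2)}\gamma}_\infty$ contribution escapes the sup-norm absorption. This is the step where the ``new multiplicative Sobolev inequality'' advertised in the introduction does its work; the $n=2$ estimates, by contrast, are routine adaptations of Kuwert--Schätzle \cite{kuwert2001wfs,kuwert2002gfw} and I would present them briefly.
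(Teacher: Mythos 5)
Your plan for the two--dimensional estimates \eqref{eqnS2msi1Azero} and \eqref{eqnS2msi2A} matches the paper exactly: the paper simply cites Lemma~2.5 of \cite{kuwert2001wfs} and Lemma~4.2 of \cite{kuwert2002gfw}, and your description (Michael--Simon applied to $|A^o|^2\gamma^{s/2}$, $H^2\le c|A|^2$, absorption of $|A^o|^6$ into $|A|^2|A^o|^4$) is the standard Kuwert--Sch\"atzle argument. No issue there.

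For the three--dimensional inequality \eqref{msin3}, however, there is a genuine gap in your mechanism. You propose to apply Michael--Simon to $u=|A^o|^p\gamma^{sp/2}$ and then ``peel off $\delta\int_M|\nabla_{(2)}A^o|^2\gamma^s$ via a Young-inequality step.'' But the Michael--Simon inequality applied to any function of the form $|A^o|^p\gamma^q$ produces only $|\nabla A^o|$ (a first derivative) on the right-hand side, and Young's inequality is an algebraic pointwise inequality that cannot raise the derivative order. So the term $\delta\int_M|\nabla_{(2)}A^o|^2\gamma^s$ simply cannot appear from the route you describe. What actually produces it in the paper is a preliminary integration by parts,
\begin{equation*}
\int_M |\nabla A^o|^3\gamma^s\,d\mu \le \delta\int_M |\nabla_{(2)}A^o|^2\gamma^s\,d\mu + c_\delta\int_M |A^o|^6\gamma^s\,d\mu + c(c_{\gamma1})^3\int_{[\gamma>0]} |A^o|^3\,d\mu,
\end{equation*}
which is stated as equation \eqref{eqS2n31} in the proof and is the real workhorse (this, not Michael--Simon alone, is the ``new multiplicative Sobolev inequality'' advertised in the introduction). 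With \eqref{eqS2n31} in hand, the paper applies Michael--Simon to the specific choice $u=|A^o|^4\gamma^{2s/3}$ (note: the $\gamma$ exponent is $2s/3$, not $sp/2=2s$ as your template $|A^o|^p\gamma^{sp/2}$ would give), so that $\vn{u}_{3/2}^{3/2}=\int_M|A^o|^6\gamma^s\,d\mu$; the resulting $|\nabla A^o|$-terms are then fed back into \eqref{eqS2n31}. You should state \eqref{eqS2n31} explicitly, prove it by integrating $\int|\nabla A^o|^2\,|\nabla A^o|\gamma^s$ by parts and applying Young, and only then combine with the Michael--Simon step; otherwise the $\delta\int_M|\nabla_{(2)}A^o|^2\gamma^s$ and the residual $|A^o|^6\gamma^s$ on the right of \eqref{msin3} have no source. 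Your intuitions about the exponents $\tfrac32$, $3$, $\tfrac92$ on $\vn{A^o}_{3,[\gamma>0]}$ and the requirement $s\ge 6$ are otherwise consistent with the paper's computation.
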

\begin{proof}
As the $n=2$ case is Lemma 2.5 in \cite{kuwert2001wfs} and Lemma 4.2 in \cite{kuwert2002gfw}, we
only prove the $n=3$ case.
First, using integration by parts we estimate
\begin{equation}
\int |\nabla A^o|^3\gamma^sd\mu 
\le \delta\int_M |\nabla_{(2)}A^o|^2\gamma^sd\mu + c_\delta\int_M |A^o|^6\gamma^sd\mu
   + c(c_{\gamma1})^3\int_{[\gamma>0]} |A^o|^3d\mu,
\label{eqS2n31}
\end{equation}
for any $\delta > 0$.
Now we use the Michael-Simon Sobolev inequality \cite{michael1973sam} with
$u=|A^o|^4\gamma^{2s/3}$ to estimate
\begin{align*}
\Big(&\int_M |A^o|^6 \gamma^sd\mu\Big)^\frac{2}{3}
\le c\int_M \big|\nabla \big(|A^o|^4\gamma^{2s/3}\big)\big| d\mu
    + c\int_M |H|\cdot|A^o|^4\gamma^{2s/3} d\mu 
\\
&\le c\int_M |\nabla A^o|^2|A^o|\gamma^sd\mu + 
                \Big(\int_M|A^o|^6\gamma^sd\mu\Big)^\frac{2}{3}
                \Big(\int_{[\gamma>0]}|A^o|^{3}\Big)^\frac{1}{3}
\\*
&\qquad
    + c\Big(\int_{[\gamma>0]}|A^o|^3d\mu\Big)^\frac{1}{3}
       \Big(\int_M |A|^2|A^o|^4\gamma^{s} d\mu\Big)^\frac{2}{3}
    + c(c_{\gamma1})^2\vn{A^o}^3_{3,[\gamma>0]}.
\end{align*}
Note that we needed $s\ge6$.  Estimating the above and then combining the result with
\eqref{eqS2n31} gives
\begin{align*}
\int_M |A^o|^6 \gamma^s d\mu
 &\le c\vn{A^o}_{3,[\gamma>0]}^\frac{3}{2}\int_M |\nabla A^o|^3\gamma^sd\mu
    + c(c_{\gamma1})^3\vn{A^o}^\frac{9}{2}_{3,[\gamma>0]}
\\*
&\qquad
    + c\vn{A^o}_{3,[\gamma>0]}^\frac{3}{2}\int_M \big(|A^o|^6+|A|^2|A^o|^4\big) \gamma^s d\mu
\\
 &\le c\vn{A^o}_{3,[\gamma>0]}^\frac{3}{2}
     \int_M \big(|\nabla_{(2)} A^o|^2 + |A|^2|A^o|^4+|A^o|^6\big) \gamma^s d\mu
\\*
&\qquad
    + c(c_{\gamma1})^3\vn{A^o}^\frac{9}{2}_{3,[\gamma>0]}.
\end{align*}
This estimates the first term.  For the second, simply estimate
\begin{align*}
\int_M|A^o|^2|\nabla A^o|^2\gamma^sd\mu
 &\le c\int_M |A^o|^6 \gamma^s d\mu + c\int_M |\nabla A^o|^3\gamma^s d\mu
\end{align*}
and again use \eqref{eqS2n31}.
This estimates the second term, and combining the two estimates above finishes the proof.
\qed
\end{proof}
We can now prove our main estimate for this part.

\begin{prop}
\label{propS2GLprop5}
Suppose $n\in\{2,3\}$, $\gamma$ is as in \eqref{eqnS2eqngamma} and $f:M^n\rightarrow\R^{n+1}$ is an
immersion.
Then there exists an $\epsilon_0 > 0$ such that if $\vn{A^o}^n_{n,[\gamma>0]}<\epsilon_0$
we have 
\begin{align*}
\int_M&\big(|\nabla_{(2)}A|^2
 +  |A|^2|\nabla A|^2
 +  |A|^4|A^o|^2\big)\gamma^6d\mu
\\*&
\le c\int_M |F|^2\gamma^6d\mu
      + c{(c_{\gamma1})}^4
        \int_{[\gamma>0]}|A|^2d\mu
    + c(c_{\gamma1})^{6-n}\vn{A^o}^n_{n,[\gamma>0]},
\end{align*}
where $c$ depends on $n$ only.
\end{prop}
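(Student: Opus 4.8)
The plan is to start from Proposition~\ref{propS2ks1}, which already gives
\[
\int_M\big(|\nabla_{(2)}A|^2 + |A|^2|\nabla A|^2 + |A|^4|A^o|^2\big)\gamma^6d\mu
\le c\int_M|F|^2\gamma^6d\mu + c(c_{\gamma1})^4\int_{[\gamma>0]}|A|^2d\mu
 + c_1\int_M\big(|A^o|^2|\nabla A^o|^2 + |A^o|^6\big)\gamma^6d\mu,
\]
with $s=6$. The entire task is then to absorb the last term on the right into the left-hand side, using the smallness hypothesis $\vn{A^o}^n_{n,[\gamma>0]}<\epsilon_0$. For this I would invoke Lemma~\ref{lemS2msiboth}: in the case $n=2$ the estimate \eqref{eqnS2msi1Azero} bounds $\int_M(|A^o|^6 + |A^o|^2|\nabla A^o|^2)\gamma^6d\mu$ by $c_2\vn{A^o}^2_{2,[\gamma>0]}$ times $\int_M(|\nabla_{(2)}A^o|^2 + |A|^2|\nabla A^o|^2 + |A|^2|A^o|^4)\gamma^6d\mu$ plus a $(c_{\gamma1})^4\vn{A^o}^4_{2,[\gamma>0]}$ error; in the case $n=3$ the estimate \eqref{msin3} does the analogous job with $\vn{A^o}^{3/2}_{3,[\gamma>0]}$ as the small factor and an extra $\delta\int_M|\nabla_{(2)}A^o|^2\gamma^6d\mu$ term, which is harmless since $\delta$ is at our disposal.

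The key point is to compare the "geometric" integrand appearing on the right of Lemma~\ref{lemS2msiboth} with the left-hand side of the desired inequality. Since $A^o = A - \tfrac1n H g$, we have pointwise $|\nabla_{(2)}A^o| \le |\nabla_{(2)}A|$ (up to a dimensional constant, as $\nabla g = 0$), $|A|^2|\nabla A^o|^2 \le c|A|^2|\nabla A|^2$, and $|A|^2|A^o|^4 \le |A|^4|A^o|^2$ trivially; likewise $|A^o|^6 \le |A|^4|A^o|^2$. Therefore the whole bracket $\int_M(|\nabla_{(2)}A^o|^2 + |A|^2|\nabla A^o|^2 + |A|^2|A^o|^4 + |A^o|^6)\gamma^6d\mu$ is controlled by a constant multiple of $\int_M(|\nabla_{(2)}A|^2 + |A|^2|\nabla A|^2 + |A|^4|A^o|^2)\gamma^6d\mu$, i.e.\ by the left-hand side $L$ of the target estimate. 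Feeding this back, the contribution of $c_1\int_M(|A^o|^2|\nabla A^o|^2 + |A^o|^6)\gamma^6d\mu$ is at most $c\,\vn{A^o}^n_{n,[\gamma>0]}\,L + \delta L + (\text{error terms})$. Choosing $\delta$ small and then $\epsilon_0$ small enough that $c\epsilon_0 + \delta \le \tfrac12$, this term is absorbed into the left, leaving a factor $\tfrac12$ in front of $L$ which is cleared by doubling the constants.

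Finally I need to check the error terms are of the stated form. From \eqref{eqnS2msi1Azero} the leftover error is $c(c_{\gamma1})^4\vn{A^o}^4_{2,[\gamma>0]}$; since $\vn{A^o}^2_{2,[\gamma>0]}<\epsilon_0$ is bounded, $\vn{A^o}^4_{2,[\gamma>0]} \le \epsilon_0\vn{A^o}^2_{2,[\gamma>0]} \le c\vn{A^o}^2_{2,[\gamma>0]}$, which matches $c(c_{\gamma1})^{6-n}\vn{A^o}^n_{n,[\gamma>0]}$ with $n=2$. From \eqref{msin3} the leftover is $c(c_{\gamma1})^3(\vn{A^o}^3_{3,[\gamma>0]} + \vn{A^o}^{9/2}_{3,[\gamma>0]})$, and again $\vn{A^o}^{9/2}_{3,[\gamma>0]} \le \epsilon_0^{1/2}\vn{A^o}^3_{3,[\gamma>0]} \le c\vn{A^o}^3_{3,[\gamma>0]}$, matching the stated form with $n=3$. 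One should also absorb the stray $c(c_{\gamma1})^3\int_{[\gamma>0]}|A^o|^3d\mu$ hidden inside \eqref{eqS2n31}/\eqref{msin3} the same way. The main (and really only) obstacle is the bookkeeping: keeping track of which terms are genuinely absorbable versus which must be kept as errors, and verifying that no term with a bad power of $c_{\gamma1}$ or of $\vn{A^o}$ survives — the analytic content is entirely supplied by Proposition~\ref{propS2ks1} and Lemma~\ref{lemS2msiboth}.
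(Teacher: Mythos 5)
Your proof is correct and follows essentially the same route as the paper: start from Proposition~\ref{propS2ks1}, feed in Lemma~\ref{lemS2msiboth} (\eqref{eqnS2msi1Azero} for $n=2$, \eqref{msin3} for $n=3$), compare the $A^o$-quantities with the $A$-quantities to recover the left-hand side, and absorb using the smallness of $\vn{A^o}^{n/(n-1)}_{n,[\gamma>0]}$ together with \eqref{eqS2glp1} to normalise the error terms. The only minor inaccuracy is your worry about a ``stray'' term hidden inside \eqref{eqS2n31}: that error is already accounted for in the stated right-hand side of \eqref{msin3}, so no additional absorption is needed beyond what you describe.
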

\begin{proof}
First note that
\begin{equation}
\vn{A^o}^4_{2,[\gamma>0]} \le \epsilon_0 \vn{A^o}^2_{2,[\gamma>0]},
\ \ \text{ and }\ \ 
\vn{A^o}^\frac{9}{2}_{3,[\gamma>0]} \le \epsilon^\frac{1}{2}_0 \vn{A^o}^3_{3,[\gamma>0]},
\label{eqS2glp1}
\end{equation}
for $n=2$ and $n=3$ respectively.
Combining Proposition \ref{propS2ks1} with \eqref{eqnS2msi1Azero} for $n=2$, \eqref{msin3} for
$n=3$, gives
\begin{align*}
  \int_M &\big(|\nabla_{(2)}A|^2 + |\nabla A|^2|A|^2 + |A|^4|A^o|^2 \big)\gamma^sd\mu
\\
 &\le c\int_M |F|^2\gamma^sd\mu
    + c{(c_{\gamma1})}^4\vn{A}^2_{2,[\gamma>0]}
    + c_1\delta(n-2)\int_M|\nabla_{(2)}A|^2\gamma^sd\mu
\\&\quad
    + c_1c_n\vn{A^o}_{n,[\gamma>0]}^\frac{n}{n-1}
      \int_M \big(|\nabla_{(2)}A|^2 + |\nabla A|^2|A|^2 + |A|^4|A^o|^2\big) \gamma^s d\mu
\\
&\quad
    + c(n-2)(c_{\gamma1})^3\big(\vn{A^o}^3_{3,[\gamma>0]} + \vn{A^o}^\frac{9}{2}_{3,[\gamma>0]}\big)
\\
&\quad
    + c(3-n)(c_{\gamma1})^4\vn{A^o}^4_{2,[\gamma>0]},
\end{align*}
for any $\delta>0$.
Absorbing and using \eqref{eqS2glp1},
\begin{align*}
 (1-c_1c_n&\epsilon_0^\frac{1}{n-1} - c_1\delta(n-2))
 \int_M\big(|\nabla_{(2)}A|^2 + |\nabla A|^2|A|^2 + |A|^4|A^o|^2\big)\gamma^6d\mu
\\
 &\le c\int_M|F|^2\gamma^6d\mu
    + c{(c_{\gamma1})}^4\vn{A}^2_{2,[\gamma>0]}
    + c(c_{\gamma1})^{6-n}\vn{A^o}^n_{n,[\gamma>0]}.
\end{align*}
Therefore, for $\delta, \epsilon_0$ sufficiently small we may conclude the result.
\qed
\end{proof}

We can now give our main theorem for this part.

\begin{thm}[Gap Lemma]Suppose $n\in\{2,3\}$ and $f:M^n\rightarrow\R^{n+1}$ is a properly immersed
surface with $F = \Delta H \equiv 0$.  Then if
\begin{equation*}
\int_M |A^o|^n d\mu < \epsilon_0,\quad\text{ and }\quad
\liminf_{\rho\rightarrow\infty} \frac{1}{\rho^4} \int_{f^{-1}(B_\rho(0))} |A|^2 d\mu = 0,
\end{equation*}
$f$ is an embedded plane or sphere.
\label{thmS2gaplemma} 
\end{thm}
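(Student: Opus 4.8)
The plan is to feed the stationarity assumption $F = \Delta H \equiv 0$ into Proposition~\ref{propS2GLprop5} together with the curvature growth hypothesis to force the full second fundamental form to vanish on the tracefree part, and then to argue geometrically. First I would apply Proposition~\ref{propS2GLprop5} with the cutoff $\gamma$ taken to be (a function of) a radial cutoff $\tilde\gamma_\rho$ which is $1$ on $B_\rho(0)$, supported in $B_{2\rho}(0)$, with $\vn{\nabla\tilde\gamma_\rho}_\infty \le c/\rho$ and $\vn{\nabla_{(2)}\tilde\gamma_\rho}_\infty \le c/\rho^2 \le c(1+|A|)$, so that \eqref{eqnS2eqngamma} holds with $c_{\gamma1} = c/\rho$. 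Since $F \equiv 0$ the first term on the right-hand side disappears, leaving
\[
\int_M\big(|\nabla_{(2)}A|^2 + |A|^2|\nabla A|^2 + |A|^4|A^o|^2\big)\gamma^6 d\mu
 \le \frac{c}{\rho^4}\int_{[\gamma>0]}|A|^2 d\mu + \frac{c}{\rho^{6-n}}\vn{A^o}^n_{n,[\gamma>0]},
\]
valid once $\vn{A^o}^n_{n,[\gamma>0]} < \epsilon_0$, which is guaranteed globally by the first hypothesis. Now I would let $\rho\to\infty$ along a sequence realising the $\liminf$ in the statement: the first term tends to zero along that sequence, and the second tends to zero since $6-n\ge 3 > 0$ and $\vn{A^o}_{n,[\gamma>0]}^n$ is bounded by the global quantity $\int_M|A^o|^n\,d\mu < \epsilon_0$. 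By monotone convergence on the left (the integrands are nonnegative and $\gamma^6 \uparrow 1$), we conclude $\int_M|A|^4|A^o|^2 d\mu = 0$ and $\int_M|\nabla_{(2)}A|^2 d\mu = 0$, hence also $\nabla A \equiv 0$: the surface has parallel second fundamental form.

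From $\nabla A \equiv 0$ the geometric classification is classical. A hypersurface in $\R^{n+1}$ with parallel second fundamental form has constant principal curvatures, and by the structure theory for such submanifolds (or directly: the distributions given by the eigenspaces of $A$ are parallel and the leaves are totally umbilic), each connected component is an open subset of an affine subspace $\R^k$, a round sphere $S^k$, or a product $S^k\times\R^{n-k}$. The relation $|A|^4|A^o|^2 \equiv 0$ then forces, on each component, either $A \equiv 0$ (so $|A^o|^2 = 0$ trivially and the component is a piece of a hyperplane) or $A^o \equiv 0$ (the component is totally umbilic, hence a piece of a round sphere or a hyperplane). Combined with \emph{properness} and the fact that $M$ is connected (implicitly; otherwise "an embedded plane or sphere" should read "a union thereof" — I would state this as in the theorem), the component is a \emph{complete} totally geodesic or totally umbilic hypersurface, i.e.\ a whole plane or a whole round sphere. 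Properness upgrades the immersion to an embedding in both cases (a complete affine subspace is embedded; a round sphere of constant radius, properly immersed, is embedded). This gives the conclusion.

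The main obstacle is the interchange of limit and integral and making sure the growth hypothesis is used exactly where needed: one must check that the smallness requirement $\vn{A^o}^n_{n,[\gamma>0]} < \epsilon_0$ of Proposition~\ref{propS2GLprop5} is met for \emph{every} $\rho$ (it is, since the global $L^n$ norm of $A^o$ is below $\epsilon_0$), and that the error term $\rho^{-(6-n)}\vn{A^o}^n_{n,[\gamma>0]}$ genuinely vanishes — here it is crucial that $n\le 3$ so that $6-n\ge 3>0$, which is precisely the dimensional restriction flagged in the text. A secondary point requiring care is the passage from $\nabla A\equiv 0$ plus $|A|^4|A^o|^2\equiv 0$ to the global statement: one should note that $|A^o|$ is then a constant on each component, so either it is zero everywhere on that component, or it is a nonzero constant and then $|A|\ne 0$ everywhere, forcing $|A|^4|A^o|^2 > 0$, a contradiction; thus $A^o\equiv 0$ on every component, and the completeness coming from properness closes the argument with no further analysis.
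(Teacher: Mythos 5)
Your argument follows the paper's proof almost verbatim: localise Proposition~\ref{propS2GLprop5} with a radial cutoff so that $c_{\gamma1}=c/\rho$, drop the $F$-term since $\Delta H\equiv0$, let $\rho\to\infty$ along a sequence realising the $\liminf$, and conclude that the left-hand side vanishes. Two small remarks are worth making, neither of which is fatal. First, the step ``$\int_M|\nabla_{(2)}A|^2\,d\mu=0$, hence $\nabla A\equiv0$'' is not a direct implication on a noncompact hypersurface: $\nabla_{(2)}A=0$ only makes $\nabla A$ parallel, hence $|\nabla A|$ is constant on each component, and you need the third vanishing integral $\int_M|A|^2|\nabla A|^2\,d\mu=0$ (also part of Proposition~\ref{propS2GLprop5}, which you omitted to mention) to rule out a nonzero constant. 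Second, the detour through the classification of hypersurfaces with parallel second fundamental form (cylinders, etc.) is heavier machinery than required: from $|A|^4|A^o|^2\equiv0$ alone one gets $A^o\equiv0$ \emph{pointwise} (at a point where $A\neq0$ this forces $A^o=0$, and where $A=0$ one has $A^o=0$ trivially), so the surface is totally umbilic outright; the classical rigidity of connected, complete, totally umbilic hypersurfaces then gives a plane or round sphere with no need to first establish $\nabla A\equiv0$ or to exclude $S^k\times\R^{n-k}$. This is essentially how the paper closes the argument, stating directly that $|A|=0$, $|A^o|=0$, or both, and invoking completeness.
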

\begin{proof}
We set the cutoff function $\gamma$ to be such that $\gamma(p) =
\varphi\left(\frac{1}{\rho}|f(p)|\right)$, where $\varphi\in C^1(\R)$ and
$\chi_{B_{1/2}(0)} \le \varphi \le \chi_{B_1(0)}$.
With this choice ${c_{\gamma1}} = \frac{c}{\rho}$.  Recall that in our estimates we do not use the
second derivative of $\gamma$.  Taking $\rho\rightarrow\infty$ in Proposition \ref{propS2GLprop5}
gives
\[
  \int_M |\nabla_{(2)}A|^2 + |\nabla A|^2|A|^2 + |A|^4|A^o|^2d\mu
 \le c\liminf_{\rho\rightarrow\infty}\frac{1}{\rho^4}\vn{A}^2_{2,{f^{-1}(B_\rho(0))}},
\]
where the terms involving $A^o$ vanished since $\vn{A^o}^n_2 < \epsilon_0 < \infty$.
Therefore $|A| = 0$, $|A^o|=0$, or both, which implies that $f$ maps into a round sphere or plane
$S\subset\R^{n+1}$, and since $f$ is complete the map $f:M\rightarrow S$ is a global isometry.
\qed
\end{proof}


We now begin the second part of this section, changing focus by proving estimates which rely on the
parabolic nature of the evolution equation \eqref{SD}. To begin, we state the following elementary
evolution equations, whose proof is standard \cite{huisken1999gee}.
\begin{lem}
\label{lemS2evolutionequations}
For a surface diffusion flow $f:M^n\times[0,T)\rightarrow\R^{n+1}$ the following equations hold:
\begin{align*}
  \pD{}{t}g_{ij} &= 2FA_{ij},\quad
  \pD{}{t}g^{ij} = -2FA^{ij},\quad
  \pD{}{t}d\mu = (HF)d\mu,\\
  \pD{}{t}\nu &= -\nabla F,\quad
  \pD{}{t}A_{ij} = -\nabla_{ij}F + FA_i^pA_{pj},\\
  \pD{}{t}A^o_{ij} &= -S^o(\nabla_{(2)}F) + F\big[A_i^pA_{pj}
                                          + \sfrac{1}{n}g_{ij}|A|^2-\sfrac{2}{n}HA_{ij}\big],
  \quad\text{ and }\\
  \pD{}{t}\nabla_{(k)}A &= -\Delta^2\nabla_{(k)}A + P_3^{k+2}(A),\ k\in\N_0,
\end{align*}
where $S^o(T)$ is the symmetric tracefree part of a bilinear form $T$.
\end{lem}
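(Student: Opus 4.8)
These are the standard variation formulas for a normal flow with speed $F = \Delta H$, so the plan is to record how each line follows from the generic first-variation identities specialised to $\partial_t^\perp f = F\nu$, emphasising only the points where the particular structure of $F$ matters. First I would invoke the well-known formulas valid for any normal variation $\partial_t f = F\nu$ (see \cite{huisken1999gee}): $\partial_t g_{ij} = 2FA_{ij}$ (from $g_{ij} = \langle \partial_i f, \partial_j f\rangle$ and $\partial_i\partial_t f = (\partial_i F)\nu + F\partial_i\nu$, with $\partial_i\nu = -A_i^{\,p}\partial_p f$), and then $\partial_t g^{ij} = -g^{ik}g^{jl}\partial_t g_{kl} = -2FA^{ij}$, and $\partial_t d\mu = \tfrac12 g^{ij}(\partial_t g_{ij})\,d\mu = (HF)\,d\mu$. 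The normal evolves by $\partial_t\nu = -\nabla F$, which follows from differentiating $|\nu|^2 = 1$ (forcing $\partial_t\nu$ tangential) and $\langle\nu,\partial_i f\rangle = 0$ (which gives $\langle\partial_t\nu,\partial_i f\rangle = -\langle\nu,\partial_i\partial_t f\rangle = -\partial_i F$).

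Next I would treat the second fundamental form. Writing $A_{ij} = -\langle \nu, \partial_i\partial_j f - \Gamma_{ij}^k\partial_k f\rangle = \langle\partial_i\nu,\partial_j f\rangle$ up to sign convention, one differentiates in $t$ and uses the formulas already obtained for $\partial_t\nu$ and $\partial_t(\partial_j f)$; the commutator terms and the Christoffel variation reorganise into $\partial_t A_{ij} = -\nabla_{ij}F + FA_i^{\,p}A_{pj}$. The tracefree part $A^o_{ij} = A_{ij} - \tfrac1n g_{ij}H$ then follows by the quotient rule: $\partial_t A^o_{ij} = \partial_t A_{ij} - \tfrac1n (\partial_t g_{ij})H - \tfrac1n g_{ij}\partial_t H$, where $\partial_t H = \partial_t(g^{ij}A_{ij}) = (\partial_t g^{ij})A_{ij} + g^{ij}\partial_t A_{ij} = -2FA^{ij}A_{ij} + g^{ij}(-\nabla_{ij}F + FA_i^pA_{pj}) = -\Delta F - F|A|^2$. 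Substituting and extracting the symmetric tracefree part $S^o$ of $\nabla_{(2)}F$ collects the Hessian contributions into $-S^o(\nabla_{(2)}F)$ and the zeroth-order-in-$F$ terms into $F[A_i^pA_{pj} + \tfrac1n g_{ij}|A|^2 - \tfrac2n HA_{ij}]$, matching the stated line.

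The last equation is the only one where the fourth-order parabolic structure enters, so that is the main point of the argument. Starting from $\partial_t A_{ij} = -\nabla_{ij}F + FA_i^pA_{pj}$ with $F = \Delta H$, one uses the contracted Simons-type identity $\nabla_{ij}\Delta H = \Delta A_{ij} + P_3^0(A)$ (interchanging derivatives costs curvature terms which, for a hypersurface, are themselves quadratic in $A$), together with $H = g^{kl}A_{kl}$, to rewrite $-\nabla_{ij}\Delta H$ as $-\Delta^2 A_{ij}$ plus a sum of contractions of three copies of $A$ with a total of two derivatives, i.e. a $P_3^2(A)$ term; the reaction term $FA_i^pA_{pj} = (\Delta H)A_i^pA_{pj}$ is also of type $P_3^2(A)$. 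Passing to $\nabla_{(k)}A$ one commutes $k$ covariant derivatives past $\partial_t$, each commutation again producing only $P_3^{\le k+2}(A)$ terms, and one absorbs everything of lower derivative order into the schematic $P_3^{k+2}(A)$ notation, yielding $\partial_t\nabla_{(k)}A = -\Delta^2\nabla_{(k)}A + P_3^{k+2}(A)$. The bookkeeping of which curvature terms arise on commuting derivatives — and checking that the count of $A$-factors is exactly three and the derivative count is exactly $k+2$ — is the part that requires care; all of it is routine and I would simply cite \cite{huisken1999gee} for the details.
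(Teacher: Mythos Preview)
Your proposal is correct and matches the paper's approach: the paper does not give a proof at all, merely stating that these equations are standard and citing \cite{huisken1999gee}, and your sketch simply unpacks that citation with the usual first-variation computations. The only added content beyond the reference is your handling of the last line, where you correctly identify that the Simons-type identity $\nabla_{ij}H = \Delta A_{ij} + P_3^0(A)$ together with derivative commutation turns $-\nabla_{ij}\Delta H$ into $-\Delta^2 A_{ij} + P_3^2(A)$, which is exactly what the paper intends by deferring to \cite{huisken1999gee}.
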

Using the above evolution equations, and the previous estimates, we may now prove that the Willmore
energy is nonincreasing for surface diffusion flows satisfying
\eqref{eqS1maintheoremsmallnessrequirement}.

\begin{prop}Suppose $f:M^2\times[0,T)\rightarrow\R^3$ is a surface diffusion flow satisfying
\eqref{eqS1maintheoremsmallnessrequirement}.  Then
\[
\rD{}{t}\int_M|A^o|^2d\mu = \frac{1}{2}\rD{}{t}\int_M|H|^2d\mu
                          \le -\frac{1}{4}\int_M|\Delta H|^2d\mu,
\]
and $f(\cdot,t)$ is a family of embeddings.
\label{propMonotoneCurvature}
\end{prop}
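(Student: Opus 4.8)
\emph{Strategy.} I would reduce the differential inequality to a single identity for the Willmore energy and then absorb the resulting lower-order term with the interior estimates of this section. For the equality, note that in dimension two $|A^o|^2 = |A|^2-\tfrac12H^2$ and the Gauss equation $2K = H^2-|A|^2$ give $|A^o|^2 = \tfrac12H^2-2K$; integrating and applying Gauss--Bonnet,
\[
\int_M|A^o|^2\,d\mu \;=\; \tfrac12\int_M H^2\,d\mu-4\pi\chi(M).
\]
Since $\chi(M)$ is a topological invariant, preserved along a smooth flow, this is precisely $\rD{}{t}\int_M|A^o|^2d\mu=\tfrac12\rD{}{t}\int_M|H|^2d\mu$. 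As $\chi(M)\le2$ for a closed surface it also follows that $\int_MH^2d\mu=2\int_M|A^o|^2d\mu+8\pi\chi(M)\le2\int_M|A^o|^2d\mu+16\pi$, which under \eqref{eqS1maintheoremsmallnessrequirement} is strictly below $32\pi$ at $t=0$.

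Next I would compute $\rD{}{t}\int_M|A^o|^2d\mu$ directly from Lemma \ref{lemS2evolutionequations}. Using $\pD{}{t}d\mu=HF\,d\mu$, $\pD{}{t}g^{ij}=-2FA^{ij}$ and the evolution of $A^o_{ij}$, integrating by parts twice in the leading term $\langle A^o,\nabla_{(2)}F\rangle$ with the Codazzi identity $\nabla^jA^o_{ij}=\tfrac12\nabla_iH$ (valid for $n=2$), and using that $(A^o)^3$ has vanishing trace for a tracefree symmetric $2\times2$ form, all curvature-cubic contributions cancel and one is left with
\[
\rD{}{t}\int_M|A^o|^2\,d\mu \;=\; -\int_M|\Delta H|^2\,d\mu\;-\;\int_M(\Delta H)\,H\,|A^o|^2\,d\mu .
\]
(The identical right-hand side results from computing $\tfrac12\rD{}{t}\int_MH^2d\mu$, reproving the equality independently.) By Young's inequality the last term is at most $\tfrac12\int_M|\Delta H|^2d\mu+\tfrac12\int_MH^2|A^o|^4d\mu$, so the proposition reduces to showing that, for $\epsilon_0$ small, $\int_MH^2|A^o|^4d\mu$ is a small multiple of $\int_M|\Delta H|^2d\mu$.

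This last estimate is the crux. Since $M$ is compact we may take the cutoff $\gamma\equiv1$, hence $c_{\gamma1}=0$, throughout Section 2. Proposition \ref{propS2GLprop5} then gives $\int_M\big(|\nabla_{(2)}A|^2+|A|^2|\nabla A|^2+|A|^4|A^o|^2\big)d\mu\le c\int_M|\Delta H|^2d\mu$, and with the pointwise bounds $|\nabla A^o|\le|\nabla A|$, $|\nabla_{(2)}A^o|\le c|\nabla_{(2)}A|$, $|A|^2|A^o|^4\le|A|^4|A^o|^2$ this turns \eqref{eqnS2msi1Azero} into $\int_M|A^o|^6d\mu+\int_M|A^o|^2|\nabla A^o|^2d\mu\le c\,\vn{A^o}_2^2\int_M|\Delta H|^2d\mu$. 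To bound $\int_MH^2|A^o|^4d\mu$ I would use $\int_MH^2d\mu<32\pi$ to write $\int_MH^2|A^o|^4d\mu\le\vn{A^o}_\infty^4\int_MH^2d\mu<32\pi\,\vn{A^o}_\infty^4$ and then apply Lemma \ref{lemS2int} to $T=A^o$ (with $\gamma\equiv1$):
\[
\vn{A^o}_\infty^4\;\le\;c\,\vn{A^o}_2^2\Big(\int_M|\nabla_{(2)}A^o|^2\,d\mu+\int_M|A|^4|A^o|^2\,d\mu+\vn{A^o}_2^2\Big)\;\le\;c\,\vn{A^o}_2^2\Big(\int_M|\Delta H|^2\,d\mu+\vn{A^o}_2^2\Big),
\]
the second step by Proposition \ref{propS2GLprop5} again. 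Hence $\int_MH^2|A^o|^4d\mu\le c\,\epsilon_0\int_M|\Delta H|^2d\mu$ up to the lower-order term $c\,\epsilon_0\vn{A^o}_2^2$, which is absorbed using the boundedness of $\int_M|A^o|^2d\mu$ and the very inequality being proved; for $\epsilon_0$ small enough this gives $\rD{}{t}\int_M|A^o|^2d\mu\le-\tfrac14\int_M|\Delta H|^2d\mu$.

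Finally, embeddedness drops out: by the monotonicity just proved $\int_M|A^o|^2d\mu\big|_t\le\epsilon_0<8\pi$ for every $t\in[0,T)$, so $\int_MH^2d\mu\big|_t=2\int_M|A^o|^2d\mu\big|_t+8\pi\chi(M)<32\pi$ and the Willmore energy $\tfrac14\int_MH^2d\mu\big|_t$ stays strictly below $8\pi$; by the Li--Yau inequality \cite{LY82} an immersion of a closed surface with Willmore energy below $8\pi$ cannot have a point of multiplicity $\ge2$, so $f(\cdot,t)$ is injective, hence an embedding. The step I expect to be the main obstacle is making the bound on $\int_MH^2|A^o|^4d\mu$ genuinely small: Proposition \ref{propS2GLprop5} controls the relevant geometric integrals only up to a fixed constant, and it is the $\vn{A^o}_2$-dependence built into the multiplicative Sobolev inequalities (Lemma \ref{lemS2int} and \eqref{eqnS2msi1Azero}) that supplies the smallness — carefully separating the honestly absorbable contributions from the merely bounded ones, while keeping the coefficient $\tfrac14$ intact, is the delicate part.
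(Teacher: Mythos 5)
Your overall strategy coincides with the paper's: compute the time derivative of $\int_M|A^o|^2\,d\mu$ from Lemma \ref{lemS2evolutionequations} to obtain $-\int_M(\Delta H+H|A^o|^2)\Delta H\,d\mu$, apply Young's inequality, and reduce the proposition to showing $\int_M H^2|A^o|^4\,d\mu$ is a small multiple of $\int_M|\Delta H|^2\,d\mu$ via Proposition \ref{propS2GLprop5} and a multiplicative Sobolev inequality with $\gamma\equiv1$. The Gauss--Bonnet identity and the Li--Yau embeddedness argument also match.

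However, your final estimate has a genuine gap. You bound $\int_M H^2|A^o|^4\,d\mu \le 32\pi\,\vn{A^o}^4_\infty$ and then invoke Lemma \ref{lemS2int} with $T=A^o$, which as stated yields
\[
\vn{A^o}^4_\infty \le c\,\vn{A^o}^2_2\big(\vn{\nabla_{(2)}A^o}^2_2+\vn{A^oA^2}^2_2+\vn{A^o}^2_2\big)
\le c\,\epsilon_0\textstyle\int_M|\Delta H|^2\,d\mu + c\,\epsilon_0\vn{A^o}^2_2.
\]
The residual term $c\,\epsilon_0\vn{A^o}^2_2$ is a constant (of order $\epsilon_0^2$) and cannot be absorbed into $-\tfrac14\int_M|\Delta H|^2\,d\mu$: no absolute constant makes $\vn{A^o}^2_2\le c\int_M|\Delta H|^2\,d\mu$, since the left side is scale-invariant while the right scales like $\lambda^{-4}$. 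Your remark that it ``is absorbed using the boundedness of $\int_M|A^o|^2\,d\mu$ and the very inequality being proved'' is circular — at best it gives $\rD{}{t}\int_M|A^o|^2\,d\mu\le-\tfrac14\int_M|\Delta H|^2\,d\mu + c\epsilon_0^2$, which is strictly weaker and does not imply monotonicity.

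The fix is small. Either use Lemma \ref{CieL6} directly (where the extraneous term carries the explicit factor $(c_{\gamma1})^{2n}$, which vanishes when $\gamma\equiv1$ on a closed surface, so no leftover appears), or — as the paper does — avoid the $L^\infty$ detour entirely: apply the Peter--Paul split \eqref{eqnS2simpleest}, $\int_M H^2|A^o|^4\,d\mu \le \delta\int_M H^4|A^o|^2\,d\mu+\tfrac{1}{4\delta}\int_M|A^o|^6\,d\mu$, bound the first term directly by Proposition \ref{propS2GLprop5}, and bound the second via \eqref{eqnS2msi1Azero}, which with $c_{\gamma1}=0$ gives $\int_M|A^o|^6\,d\mu\le c\,\epsilon_0\int_M|\Delta H|^2\,d\mu$ with no residual constant. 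Choosing first $\delta$ then $\epsilon_0$ small preserves the coefficient $\tfrac14$.
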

\begin{proof}
Proposition \ref{propS2GLprop5} with $\gamma \equiv 1$ gives
\begin{equation*}
\int_M|\nabla_{(2)}A|^2 + |A|^2|\nabla A|^2 + |A|^4|A^o|^2 d\mu
\le c\int_M |\Delta H|^2d\mu.
\label{eqS2mono1}
\end{equation*}
Combining this with \eqref{eqnS2simpleest} and \eqref{eqnS2msi1Azero} we obtain
\begin{align}
\int_M|A^o|^4H^2 d\mu
&\le c\delta\int_M |\Delta H|^2d\mu + c_\delta\int_M |A^o|^6 d\mu
\notag\\*
&\le c(\delta + c_\delta\epsilon_0)\int_M |\Delta H|^2d\mu.
\label{eqS2mono2}
\end{align}
Using Gauss-Bonnet, Lemma \ref{lemS2evolutionequations} and \eqref{eqS2mono2} we compute
\begin{align*}
\rD{}{t}&\int_M|A^o|^2d\mu = \frac{1}{2}\rD{}{t}\int_M|H|^2d\mu
\\
&= - \int_M (\Delta H + H|A^o|^2)(\Delta H)d\mu
\\
&\le - \frac{1}{2}\int_M |\Delta H|^2 d\mu
     + c\int_M|A^o|^4H^2d\mu
\\
&\le - \frac{1}{4}\int_M |\Delta H|^2 d\mu,
\end{align*}
for $\delta,\epsilon_0$ sufficiently small.  We have thus shown, for all $t$,
\[
\frac{1}{4}\int_MH^2d\mu < 8\pi,
\]
and it follows from Theorem 6 in \cite{LY82} that each $f(\cdot,t)$ is an embedding.
\qed
\end{proof}

\begin{lem}
\label{lemS2gradests1}
Let $\gamma$ be as in \eqref{eqnS2eqngamma}.  The following equalities hold
for a surface diffusion flow $f:M^n\times[0,T)\rightarrow\R^{n+1}$:
\begin{align}
\rD{}{t}\int_M\frac{1}{2}H^2\gamma^sd\mu + \int_M|F|^2\gamma^sd\mu
  &=
   \frac{1}{2}\int_M H^2(\partial_t\gamma^s)d\mu - \int_MFH|A^o|^2\gamma^sd\mu
\notag\\*
  &\hskip-1.5cm + \int_M H\IP{\nabla F}{\nabla\gamma^s} - F\IP{\nabla H}{\nabla\gamma^s}d\mu,
\text{ and}
\notag
\end{align}
\vspace{-5mm}
\begin{align}
\rD{}{t}\int_M|A^o|^2\gamma^sd\mu + \int_M|F|^2\gamma^sd\mu
  &=
   \int_M |A^o|^2(\partial_t\gamma^s)d\mu + \int_MFH|A^o|^2\gamma^sd\mu
\notag\\*
  &\hskip-1.5cm + 2\int_M H\IP{A^o}{\nabla F\nabla\gamma^s} + F\IP{\nabla^* A^o}{\nabla\gamma^s}d\mu
\notag\\
  &\hskip-1.5cm - 2 \int_M F(A^o)^i_j(A^o)^j_k(A^o)^k_i \gamma^sd\mu.
\label{eqnS2lemgradest1eq2}
\end{align}
\end{lem}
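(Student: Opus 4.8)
The plan is to differentiate each integral, substitute the pointwise evolution equations of Lemma~\ref{lemS2evolutionequations}, and integrate by parts twice so as to expose $\int_M|F|^2\gamma^sd\mu$. For the first identity: from $\pD{}{t}g^{ij}=-2FA^{ij}$ and $\pD{}{t}A_{ij}=-\nabla_{ij}F+FA_i^pA_{pj}$ one gets $\pD{}{t}H=-\Delta F-F|A|^2$, so with $\pD{}{t}d\mu=HFd\mu$,
\[
\rD{}{t}\int_M\tfrac12H^2\gamma^sd\mu
=\int_M H(-\Delta F-F|A|^2)\gamma^sd\mu+\tfrac12\int_MH^2(\partial_t\gamma^s)d\mu+\tfrac12\int_MH^3F\gamma^sd\mu.
\]
Writing $|A|^2=|A^o|^2+\tfrac1nH^2$ splits $-\int_M HF|A|^2\gamma^sd\mu$ into $-\int_M HF|A^o|^2\gamma^sd\mu$ and $-\tfrac1n\int_MH^3F\gamma^sd\mu$, and for $n=2$ the cubic-in-$H$ piece cancels the contribution from $\pD{}{t}d\mu$. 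Integrating $-\int_M H\Delta F\gamma^sd\mu$ by parts once gives $\int_M\gamma^s\IP{\nabla H}{\nabla F}d\mu+\int_MH\IP{\nabla\gamma^s}{\nabla F}d\mu$, and integrating the first summand by parts again with $\Delta H=F$ gives $-\int_M|F|^2\gamma^sd\mu-\int_MF\IP{\nabla\gamma^s}{\nabla H}d\mu$; rearranging yields the first equation.

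For \eqref{eqnS2lemgradest1eq2} the scheme is the same but the algebra is heavier. Contracting the evolution equation for $A^o_{ij}$ with $2(A^o)^{ij}$, differentiating both metric factors in $|A^o|^2=g^{ik}g^{jl}A^o_{ij}A^o_{kl}$, substituting $A=A^o+\tfrac1nHg$ throughout and using $\mathrm{tr}\,A^o=0$, the algebraic terms collapse to leave $\pD{}{t}|A^o|^2=-2\IP{A^o}{\nabla_{(2)}F}$ plus a multiple of $HF|A^o|^2$ and a multiple of $F(A^o)^i_j(A^o)^j_k(A^o)^k_i$. Adding the $\pD{}{t}d\mu$ term and integrating $-2\int_M\IP{A^o}{\nabla_{(2)}F}\gamma^sd\mu$ by parts, the Codazzi identity $\nabla^j(A^o)_{ij}=\tfrac{n-1}{n}\nabla_iH$ (equivalently $\nabla^*A^o=\tfrac{n-1}{n}\nabla H$) turns the divergence term into a multiple of $\int_M\gamma^s\IP{\nabla H}{\nabla F}d\mu$, and a second integration by parts using $\Delta H=F$ produces $-\int_M|F|^2\gamma^sd\mu$ along with the cutoff-gradient terms $2\int_M H\IP{A^o}{\nabla F\,\nabla\gamma^s}+F\IP{\nabla^*A^o}{\nabla\gamma^s}d\mu$. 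Collecting and rearranging gives \eqref{eqnS2lemgradest1eq2}.

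The only real work — and the place an error is most likely — is the algebraic bookkeeping in the second identity: after $A$ is replaced by $A^o+\tfrac1nHg$, several $HF|A^o|^2$ and $H^3F$ contributions arise (from the metric variation inside $\pD{}{t}|A^o|^2$, from re-expressing $|A|^2$, and from $\pD{}{t}d\mu$), and one must check that they combine to exactly the stated coefficient with the cubic term $(A^o)^i_j(A^o)^j_k(A^o)^k_i$ correctly isolated; for $n=2$ that cubic term vanishes identically but is kept for later use with the constrained flows of \eqref{eqnS1csdflow}. All integrations by parts are boundary-free since $\gamma$ has compact support in $M$, so no further subtlety enters.
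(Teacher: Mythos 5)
Your proof follows the same plan as the paper's: differentiate in time via Lemma \ref{lemS2evolutionequations}, decompose $|A|^2=|A^o|^2+\tfrac{1}{n}H^2$ to see the $n=2$ cancellation with the $\partial_t d\mu$ contribution, and integrate by parts twice (using $\nabla^*A^o=\tfrac{n-1}{n}\nabla H$ for the second identity) so that $\Delta H=F$ produces $\int_M|F|^2\gamma^s d\mu$. Your explicit remark that one must also differentiate the metric factors in $|A^o|^2=g^{ik}g^{jl}A^o_{ij}A^o_{kl}$ is a correct and worthwhile refinement: that contribution is what flips the sign of the cubic term from the $+2F(A^o)^{ij}A^o{}_i^pA^o_{pj}$ obtained by contracting $2(A^o)^{ij}$ against the algebraic part of $\partial_t A^o_{ij}$ to the $-2$ appearing in \eqref{eqnS2lemgradest1eq2}, and also supplies part of the $HF|A^o|^2$ bookkeeping you flag.
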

\begin{proof}
Using Lemma \ref{lemS2evolutionequations},
\begin{align*}
\rD{}{t}\int_M\frac{1}{2}H^2\gamma^sd\mu
 &=  \int_M H(-\Delta F - |A|^2F)\gamma^sd\mu
   + \int_M \frac{1}{2}H^2(\partial_t\gamma^s)d\mu
   + \int_M \frac{1}{2}H^3F\gamma^sd\mu
\\
 &=  -\int_M H(\Delta F) \gamma^s d\mu - \int_MFH|A^o|^2\gamma^sd\mu 
  + \frac{1}{2}\int_MH^2(\partial_t\gamma^s)d\mu.
\end{align*}
Integrating by parts twice,
\begin{align*}
\rD{}{t}\int_M\frac{1}{2}H^2\gamma^sd\mu + \int_MF^2\gamma^sd\mu
 &=  \frac{1}{2}\int_MH^2(\partial_t\gamma^s)d\mu - \int_MFH|A^o|^2\gamma^sd\mu
\\*
 &\quad + \int_MH\IP{\nabla F}{\nabla\gamma^s} - F\IP{\nabla H}{\nabla\gamma^s}d\mu.
\end{align*}
This proves the first statement.  For the second, first compute
\begin{align*}
\rD{}{t}\int_M|A^o|^2\gamma^sd\mu
 &=  2\int_M \IP{A^o_{ij}}{-S^o_{ij}(\nabla_{(2)}F)
      + F\big[A_i^pA_{pj} + \sfrac{1}{n}g_{ij}|A|^2-\sfrac{2}{n}HA_{ij}\big]}\gamma^sd\mu
\\
 &\quad
   + \int_M |A^o|^2HF\gamma^sd\mu
   + \int_M |A^o|^2(\partial_t\gamma^s)d\mu
\\
 &= -2\int_M \IP{A^o}{\nabla_{(2)}F}\gamma^sd\mu
    -2\int_M F(A^o)^i_j(A^o)^j_k(A^o)^k_i \gamma^sd\mu
\\
 &\quad
   + \int_M |A^o|^2HF\gamma^sd\mu
   + \int_M |A^o|^2(\partial_t\gamma^s)d\mu,
\end{align*}
since $\IP{A^o_{ij}}{A_i^pA_{pj}  - \frac{2}{n}HA_{ij}} = (A^o)_{ij}(A^o)^i_k(A^o)^{kj}$.  Using now
$\nabla_{(2)}^*A^o = \frac{n-1}{n}\Delta H$
 and integration by parts twice
gives the second statement, and so we are finished.
\qed
\end{proof}

Considering for the moment the tracefree curvature estimate only, we tailor the right hand side of
the previous identity to be compatible with the multiplicative Sobolev inequalities of Lemma
\ref{lemS2msiboth}.

\begin{lem} 
Let $\gamma$ be as in \eqref{eqnS2eqngamma}, $s\ge4$, and $\delta_1,
\delta_2, \delta_3$ be fixed positive numbers.  The following inequality holds for a surface
diffusion flow $f:M^n\times[0,T)\rightarrow\R^{n+1}$:
\label{lemS2gradests2}
\begin{align*}
\rD{}{t}\int_M|A^o|^2\gamma^sd\mu + (1-\delta_1)\int_MF^2\gamma^sd\mu
  &\le 
   \delta_2\int_M|\nabla A^o|^2H^2\gamma^sd\mu\\
  &\hskip-4cm
         + \delta_3\int_M|A|^4|A^o|^2\gamma^sd\mu 
         + c\int_M(|A^o|^6 + |\nabla A^o|^2|A^o|^2)\gamma^sd\mu \\
  &\hskip-4cm
         + c\big[(c_{{\gamma}2})^2 + (c_{\gamma1})^4\big]
            \int_M|A^o|^2\gamma^{s-4}d\mu.
\end{align*}
where $c = c(s,n,\delta_i)$.
\end{lem}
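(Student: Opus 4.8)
The starting point is the identity \eqref{eqnS2lemgradest1eq2} from Lemma \ref{lemS2gradests1}, which already isolates the good term $\int_M|F|^2\gamma^sd\mu$ on the left. The task is purely to estimate each of the error terms on its right-hand side by something that can be absorbed into the left (hence the coefficient $1-\delta_1$), into the three ``structural'' terms with coefficients $\delta_2,\delta_3$, or into the lower-order quantities $\int_M(|A^o|^6+|\nabla A^o|^2|A^o|^2)\gamma^sd\mu$ and $[(c_{\gamma2})^2+(c_{\gamma1})^4]\int_M|A^o|^2\gamma^{s-4}d\mu$. So the plan is: write down the four error terms, and dispose of each with a Cauchy--Schwarz/Young inequality of the shape $ab\le\eta a^2+\tfrac{1}{4\eta}b^2$, choosing the split each time so the $F^2$-part carries a small constant and the remaining part matches a permitted term.

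\textbf{Term by term.} First, the cubic term $-2\int_M F(A^o)^i_j(A^o)^j_k(A^o)^k_i\gamma^sd\mu$: bound $|(A^o)^i_j(A^o)^j_k(A^o)^k_i|\le c|A^o|^3$ and apply Young to $|F|\cdot|A^o|^3$, giving $\delta_1'\int_M F^2\gamma^sd\mu + c\int_M|A^o|^6\gamma^sd\mu$, both allowed. Second, $\int_M FH|A^o|^2\gamma^sd\mu$: split as $F\cdot(H|A^o|^2)$, Young into $\delta_1''\int_M F^2\gamma^sd\mu + c\int_M H^2|A^o|^4\gamma^sd\mu$; then since $H^2\le c|A|^2$ we have $H^2|A^o|^4\le c|A|^4|A^o|^2$ wait --- more carefully $H^2|A^o|^4 = (H^2|A^o|^2)|A^o|^2\le c|A|^2|A^o|^2\cdot|A^o|^2$; better to write $H^2|A^o|^4\le c|A|^4|A^o|^2$ using $H^2|A^o|^2\le|A|^2\cdot c|A^o|^2\le c|A|^4$ is false in general, so instead just use $H^2\le c|A|^2$ directly to get $H^2|A^o|^4\le c|A|^2|A^o|^4$, and then $|A|^2|A^o|^4\le \tfrac12|A|^4|A^o|^2+\tfrac12|A^o|^6$ by Young, landing in the $\delta_3$ term and the $|A^o|^6$ term. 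Third, the term $2\int_M H\IP{A^o}{\nabla F\,\nabla\gamma^s}d\mu$: here $\nabla\gamma^s = s\gamma^{s-1}\nabla\gamma$, so $|\nabla\gamma^s|\le c_{\gamma1}s\gamma^{s-1}$; integrate by parts once to move a derivative off $\nabla F$ (trading it for $\nabla A^o$, $\nabla H$, and $\nabla_{(2)}\gamma^s$ terms, using $\vn{\nabla_{(2)}\gamma}_\infty\le c_{\gamma2}(1+|A|)$), then apply Cauchy--Schwarz repeatedly so that the resulting $\int_M|\nabla A^o|^2H^2\gamma^s$ piece is absorbed into $\delta_2$, the $|F|^2$ pieces absorbed, and the lowest-order leftovers collected into $[(c_{\gamma2})^2+(c_{\gamma1})^4]\int_M|A^o|^2\gamma^{s-4}d\mu$; this is the step where the powers $\gamma^{s-4}$ and the exact constants $(c_{\gamma2})^2$, $(c_{\gamma1})^4$ are pinned down, and where $s\ge4$ is needed so that negative powers of $\gamma$ do not appear. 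Fourth, $2\int_M F\IP{\nabla^*A^o}{\nabla\gamma^s}d\mu$: estimate $|\nabla^*A^o|\le c|\nabla A^o|$, $|\nabla\gamma^s|\le c_{\gamma1}s\gamma^{s-1}$, Young into $\delta_1'''\int_M F^2\gamma^sd\mu + c(c_{\gamma1})^2\int_M|\nabla A^o|^2\gamma^{s-2}d\mu$, and then interpolate $\int_M|\nabla A^o|^2\gamma^{s-2}d\mu$ --- via integration by parts in the spirit of \eqref{eqS2n31} or a Young inequality $|\nabla A^o|^2\le\eta|\nabla_{(2)}A^o|^2\cdots$ --- no; more directly, control $\int|\nabla A^o|^2\gamma^{s-2}$ by $c\int|A^o|^2|\nabla A^o|^2\gamma^s$-type terms only after a further splitting; the cleanest route is $|\nabla A^o|^2\le \tfrac12|A^o|^2|\nabla A^o|^2 + \tfrac12|\nabla A^o|^2$ is circular, so instead I would integrate by parts to get $\int|\nabla A^o|^2\gamma^{s-2}\le\delta\int|\nabla_{(2)}A^o|^2\gamma^{s-2}+\cdots$ --- but $|\nabla_{(2)}A^o|^2$ is not among the allowed terms here. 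The resolution is that $\nabla^*A^o$ is (up to lower order) $\tfrac{n-1}{n}\nabla H$ by the contracted Codazzi identity, so this term is really $c\int_M F\IP{\nabla H}{\nabla\gamma^s}d\mu$ up to $P_3^1(A^o)$-corrections, and one estimates $|F||\nabla H||\nabla\gamma^s|$ the same way as the third term, producing $|\nabla A^o|^2H^2$-, $F^2$-, and $\gamma^{s-4}$-contributions; I would set this up carefully.

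\textbf{Main obstacle.} The genuinely delicate step is the third one (and its sibling, the fourth): handling $\int_M H\IP{A^o}{\nabla F\,\nabla\gamma^s}d\mu$ requires integrating by parts to avoid leaving a bare $\nabla F = \nabla\Delta H$, which is a third-order quantity not controlled by any allowed term; after the integration by parts one gets contributions involving $\nabla H\cdot\nabla A^o$, $A^o\cdot\nabla_{(2)}\gamma^s$, and curvature times lower derivatives, and the bookkeeping to ensure every piece lands in exactly one of the six permitted slots --- with the claimed powers $\gamma^{s-4}$ and constants $(c_{\gamma2})^2+(c_{\gamma1})^4$ --- is where all the care goes. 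One must also be vigilant that $|\nabla H|^2\gamma^s$ is \emph{not} on the allowed list, so any such term must itself be converted (via $|\nabla H|^2\le c|\nabla A|^2\le c(|\nabla A^o|^2 + |\nabla H|^2)$, hence $|\nabla H|^2\le c|\nabla A^o|^2$ modulo constants, or rather by writing $\nabla H = \mathrm{tr}\,\nabla A$ and $|\nabla A|^2 = |\nabla A^o|^2 + \tfrac1n|\nabla H|^2$ so $|\nabla H|^2\le n|\nabla A^o|^2\cdot$const is wrong --- in fact $|\nabla A|^2\ge\tfrac1n|\nabla H|^2$ gives nothing; one uses $|\nabla H|^2 = |\nabla\,\mathrm{tr}A|^2\le c|\nabla A|^2 = c(|\nabla A^o|^2+\tfrac1n|\nabla H|^2)$ hence $(1-c/n)|\nabla H|^2\le c|\nabla A^o|^2$, which is fine for the correct constant) into $|\nabla A^o|^2H^2$ after multiplying by the right weight, or absorbed through a Young inequality against $|\nabla_{(2)}A^o|^2$ and then back via \eqref{eqS2n31}. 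Modulo this accounting, all inequalities used are elementary Young/Cauchy--Schwarz and the contracted Codazzi identity $\nabla^*_{(2)}A^o = \tfrac{n-1}{n}\Delta H$ already quoted in the proof of Lemma \ref{lemS2gradests1}, so no new ingredient is required.
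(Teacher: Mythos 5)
Your strategy for the cubic term and for $\int_M FH|A^o|^2\gamma^s\,d\mu$ is fine; the latter is actually a cleaner route than the paper's (which integrates by parts there, using $\nabla^*_{(2)}A^o=\tfrac{n-1}{n}\Delta H$, and only then Young), because the lemma's $\delta_2\int|\nabla A^o|^2H^2\gamma^s\,d\mu$ term is simply not needed in your approach. But two things go wrong.

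First, you omit the error term $\int_M|A^o|^2(\partial_t\gamma^s)\,d\mu$ entirely. Since $\partial_t\gamma=\IP{D\tilde{\gamma}}{\nu}F$, this term again contains $F$ and requires its own Young step; it accounts for part of the $(c_{\gamma1})^4\int_M|A^o|^2\gamma^{s-4}\,d\mu$ contribution in the statement and cannot be silently dropped.

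Second, and more seriously, the treatment of the two ``boundary'' terms
$2\int_M\IP{A^o}{\nabla F\,\nabla\gamma^s}\,d\mu$ and $2\int_M F\IP{\nabla^*A^o}{\nabla\gamma^s}\,d\mu$
does not close, and you half-acknowledge this. Estimating them separately inevitably produces either a bare $\nabla F$ (uncontrolled) or, after one integration by parts and Young, a term of the shape $c(c_{\gamma1})^2\int_M|\nabla A^o|^2\gamma^{s-2}\,d\mu$, which is not absorbable into any of the six permitted slots: it has the wrong weight and no $H^2$ or $|A^o|^2$ factor, and re-integrating by parts introduces $|\nabla_{(2)}A^o|^2$, which is also forbidden here. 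Your proposed escape, that ``$\nabla^*A^o$ is up to lower order $\tfrac{n-1}{n}\nabla H$,'' is also off: by the contracted Codazzi identity the equality $\nabla^*A^o=\tfrac{n-1}{n}\nabla H$ is exact, with no $P_3^1(A^o)$ corrections, and substituting it does not change the offending structure. The point you are missing is that the two terms are designed to be combined \emph{before} any estimate: one integration by parts on $2\int_M A^o_{ij}(\nabla^i\gamma^s)(\nabla^j F)\,d\mu$ produces $-2\int_M F\IP{\nabla^*A^o}{\nabla\gamma^s}\,d\mu-2\int_M F\IP{A^o}{\nabla_{(2)}\gamma^s}\,d\mu$, and the first of these exactly cancels the other boundary term. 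What remains, $-2\int_M F\IP{A^o}{\nabla_{(2)}\gamma^s}\,d\mu$, has no derivative of $F$ or $A^o$ at all; expanding $\nabla_{(2)}\gamma^s$ via \eqref{eqnS2eqngamma} and applying Young twice (this is where $s\ge4$ is used) produces exactly the $\tfrac{\delta_1}{2}\int_M F^2\gamma^s$, $\int_M|A^o|^6\gamma^s$ and $[(c_{\gamma2})^2+(c_{\gamma1})^4]\int_M|A^o|^2\gamma^{s-4}$ terms. Without this cancellation your bookkeeping cannot be made to land in the allowed slots, so as written the argument has a genuine gap.

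(A small side remark: the displayed identity \eqref{eqnS2lemgradest1eq2} has a stray factor of $H$ in $2\int_M H\IP{A^o}{\nabla F\,\nabla\gamma^s}\,d\mu$; the proof of the present lemma uses the term without it, and the cancellation above requires it to be absent. You took the statement at face value, which compounds the difficulty.)
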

\begin{proof}
We first compute, using integration by parts and the definition of $\gamma$,
\begin{align*}
2\int_M &(A^o)_{ij}(\nabla^i\gamma^s)(\nabla^jF)d\mu + 2\int_MF\IP{\nabla^*A^o}{\nabla\gamma^s}d\mu
  = -2\int_M F\IP{A^o}{\nabla_{(2)}\gamma^s}d\mu
\\
  &= -2s\int_MF\Big(\gamma\IP{A^o}{(D^2\tilde{\gamma}\circ f)g+(D\tilde{\gamma}\circ f)A}
                  + (s-1)\IP{A^o}{\nabla\gamma\nabla\gamma}\Big)\gamma^{s-2}d\mu.
\intertext{Note that $\IP{A^o}{(D^2\tilde{\gamma}\circ f)g} = \sum_{i,j=1}^n
A^o_{ij}(D_{ij}\tilde{\gamma})g_{ij}$ is not in general zero, as each term in
the sum is scaled by the second derivatives of $\tilde{\gamma}$.  Continuing,}
  &\hskip-6mm
2\int_M (A^o)_{ij}(\nabla^i\gamma^s)(\nabla^jF)d\mu + 2\int_MF\IP{\nabla^*A^o}{\nabla\gamma^s}d\mu
\\*
  &\le
   c\int_M|F|\big(
  |A^o|(c_{{\gamma}2})+|A^o|^2(c_{{\gamma}1})
                     \big)\gamma^{s-1}d\mu
 + c\int_M|F|\cdot|A^o|(c_{\gamma1})^2\gamma^{s-2}d\mu
\\
  &\le
   \frac{\delta_1}{2}\int_M|F|^2\gamma^sd\mu
 + \int_M|A^o|^6\gamma^{s}d\mu
 + c\delta_1^{-2}
              \big[(c_{{\gamma}2})^2 + (c_{\gamma1})^4 \big]
              \int_M|A^o|^2\gamma^{s-4}d\mu,
\end{align*}
We now estimate the terms $\int_M FH|A^o|^2\gamma^sd\mu$ and $\int_M
F(A^o)^i_j(A^o)^j_k(A^o)^k_i \gamma^sd\mu$ from Lemma \ref{lemS2gradests1}.  Using integration by
parts and 
$\nabla_{(2)}^*A^o = \frac{n-1}{n}\Delta H$,
\begin{align*}
\int_M FH|A^o|^2\gamma^sd\mu
  &=
-\int_M |\nabla H|^2|A^o|^2\gamma^sd\mu
-s\int_M H|A^o|^2\IP{\nabla H}{\nabla\gamma}\gamma^{s-1}d\mu
\\*
  &\quad
-2\int_M H|A^o|\IP{\nabla H}{\nabla|A^o|}\gamma^sd\mu
\\
  &\le c_{\delta_3}\int_M|\nabla A^o|^2|A^o|^2\gamma^sd\mu
    + \delta_3\int_M|\nabla A^o|^2H^2\gamma^sd\mu
\\
  &\quad + c_{\delta_2}(c_{\gamma1})^4\int_M|A^o|^2\gamma^{s-4}d\mu
    + \delta_2\int_M|A|^4|A^o|^2\gamma^sd\mu.
\end{align*}
This estimates the first integral.  The second is easily estimated by
\[
\int_MF(A^o*A^o*A^o)\gamma^sd\mu
 \le \frac{\delta_1}{4}\int_MF^2\gamma^sd\mu + c_{\delta_1}\int_M|A^o|^6\gamma^sd\mu.
\]
Finally we estimate the time derivative of $\gamma$ as follows.
\begin{align*}
\int_M|A^o|^2(\partial_t\gamma^s)d\mu
 &= 
s\int_M|A^o|^2(\gamma^{s-1}\IP{D\tilde{\gamma}}{\nu}F)d\mu
\\
 &\le \frac{\delta_1}{4}\int_M F^2\gamma^sd\mu +
c_{\delta_1}(c_{\tilde{\gamma}1})^4\int_M|A^o|^2\gamma^{s-4}d\mu + \int_M|A^o|^6d\mu.
\end{align*}
Combining these inequalities with Lemma \ref{lemS2gradests1} and absorbing finishes the proof.
\qed
\end{proof}

The following proposition demonstrates some local control of the $L^2$ norm of the tracefree
curvature.

\begin{prop}
Suppose $f:M^2\times[0,T^*]\rightarrow\R^{3}$ is a surface diffusion flow where for all $x\in\R^3$
there exists $\sigma < \infty$ such that $\vn{A}^2_{2,f^{-1}(B_\rho(x))}\le\sigma$.
Then there exist constants $\epsilon_1>0$ and $c_4 = c_4(\epsilon_0) > 0$ such that if
\begin{equation}
\sup_{x\in\R^3}\int_{f^{-1}(B_\rho(x))}|A^o|^2d\mu\bigg|_{t=0}
 < \epsilon_1
\label{eqnS2propapsmallness}
\end{equation}
then at any time $0 \le t < t_1 = \min\{c_4\sigma^{-1}\rho^4, T^*\}$ and for any $x\in\R^3$ we have
\begin{align}
\int_{f^{-1}(B_\rho(x))} |A^o|^2d\mu + \frac{1}{2}\int_0^t \int_{f^{-1}(B_\rho(x))}
&\big(|\nabla_{(2)}A|^2 + |\nabla A|^2|A|^2 + |A|^4|A^o|^2\big) d\mu d\tau
\notag\\*
&\le c[\epsilon_1+\sigma\rho^{-4}t].
\label{CieP8e3}
\end{align}
\label{propS2apest}
\end{prop}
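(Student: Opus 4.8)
The plan is to run a Gronwall-type argument on the localized quantity $g(t) = \sup_{x\in\R^3}\int_{f^{-1}(B_\rho(x))}|A^o|^2\gamma^sd\mu$, where $\gamma$ is a cutoff of the kind in \eqref{eqnS2eqngamma} adapted to a ball $B_\rho(x)$ (so $c_{\gamma1}\le c/\rho$ and $c_{\gamma2}\le c(1+|A|)/\rho^2$, as in the proof of Theorem \ref{thmS2gaplemma}). The starting point is the local energy inequality of Lemma \ref{lemS2gradests2} with $n=2$: it gives
\[
\rD{}{t}\int_M|A^o|^2\gamma^sd\mu + (1-\delta_1)\int_MF^2\gamma^sd\mu
 \le \delta_2\int_M|\nabla A^o|^2H^2\gamma^sd\mu + \delta_3\int_M|A|^4|A^o|^2\gamma^sd\mu + c\int_M(|A^o|^6+|\nabla A^o|^2|A^o|^2)\gamma^sd\mu + c\big[(c_{\gamma2})^2+(c_{\gamma1})^4\big]\int_M|A^o|^2\gamma^{s-4}d\mu.
\]
The first two error terms on the right are controlled by Proposition \ref{propS2GLprop5} (which requires $\vn{A^o}^2_{2,[\gamma>0]}<\epsilon_0$, hence the smallness hypothesis \eqref{eqnS2propapsmallness} together with a bootstrap keeping $\vn{A^o}^2_{2,[\gamma>0]}$ small on $[0,t_1)$), absorbing the good fourth-order term $\int F^2\gamma^s$ from the left; the cubic terms $|A^o|^6$ and $|\nabla A^o|^2|A^o|^2$ are handled by \eqref{eqnS2msi1Azero}, again absorbing into the integrated $\int_0^t\int(|\nabla_{(2)}A|^2+|A|^2|\nabla A|^2+|A|^4|A^o|^2)\gamma^sd\mu\,d\tau$ term. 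The term with $(c_{\gamma2})^2$ carries a factor $(1+|A|)^2/\rho^4$, so $\int_M|A^o|^2(c_{\gamma2})^2\gamma^{s-4}d\mu\le c\rho^{-4}\int_{[\gamma>0]}(1+|A|^2)|A^o|^2d\mu$; the $|A|^2|A^o|^2$ piece is absorbed by the good terms and the $|A^o|^2$ piece by $\vn{A^o}^2$ itself, while $(c_{\gamma1})^4\le c\rho^{-4}$.

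Assembling these estimates, on the time interval where the bootstrap smallness $\vn{A^o}_{2,[\gamma>0]}^2<\epsilon_0$ holds we obtain, after absorbing, a differential inequality of the schematic form
\[
\rD{}{t}\int_M|A^o|^2\gamma^sd\mu + \frac12\int_M\big(|\nabla_{(2)}A|^2+|A|^2|\nabla A|^2+|A|^4|A^o|^2\big)\gamma^sd\mu \le c\rho^{-4}\int_{[\gamma>0]}|A|^2d\mu \le c\sigma\rho^{-4},
\]
using the hypothesis $\vn{A}^2_{2,f^{-1}(B_\rho(x))}\le\sigma$. Integrating in time from $0$ to $t$ and taking the supremum over $x\in\R^3$ yields exactly \eqref{CieP8e3} with the right-hand side $c[\epsilon_1+\sigma\rho^{-4}t]$, since the initial term is bounded by $\epsilon_1$ via \eqref{eqnS2propapsmallness}.

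The one point requiring care — and the main obstacle — is closing the bootstrap: the above differential inequality is only valid while $\vn{A^o}^2_{2,[\gamma>0]}<\epsilon_0$, yet that quantity is what we are trying to estimate. I would argue by a standard continuity/open-closed argument: let $t_1$ be the largest time such that $g(t)\le 2c\epsilon_1$ (with $c$ the constant appearing in \eqref{CieP8e3}) on $[0,t_1)$; on that interval the hypotheses of Proposition \ref{propS2GLprop5} and Lemma \ref{lemS2msiboth} apply provided $2c\epsilon_1<\epsilon_0$, so the integrated estimate gives $g(t)\le c[\epsilon_1+\sigma\rho^{-4}t]$, and as long as $t<c_4\sigma^{-1}\rho^4$ with $c_4$ chosen so that $c[\epsilon_1+c_4]\le 2c\epsilon_1$ (say $c_4\le\epsilon_1$), this is strictly less than $2c\epsilon_1$, so $t_1$ cannot be the first exit time unless $t_1\ge c_4\sigma^{-1}\rho^4$ or $t_1=T^*$. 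Hence \eqref{CieP8e3} holds for all $t<t_1=\min\{c_4\sigma^{-1}\rho^4,T^*\}$. Choosing $\epsilon_1$ small enough (depending on $\epsilon_0$ and the various $\delta_i$) to make every absorption step legitimate completes the proof.
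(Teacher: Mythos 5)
Your approach is essentially the paper's: you combine Lemma \ref{lemS2gradests2} with Proposition \ref{propS2GLprop5} and the multiplicative Sobolev inequality \eqref{eqnS2msi1Azero} to derive the differential inequality
\[
\rD{}{t}\int_M|A^o|^2\gamma^sd\mu
  + \frac{1}{2}\int_M\big(|\nabla_{(2)}A|^2 + |\nabla A|^2|A|^2 + |A|^4|A^o|^2\big)\gamma^sd\mu
  \le \frac{c}{\rho^4}\vn{A}^2_{2,[\gamma>0]},
\]
integrate in time, and close a bootstrap. The paper does the same. However, there is one gap that you gloss over and that the paper handles explicitly: the cutoff $\gamma$ adapted to $B_\rho(x)$ equals $1$ only on $B_{\rho/2}(x)$, so integrating and taking the supremum over $x$ gives control of $\sup_x\int_{f^{-1}(B_{\rho/2}(x))}|A^o|^2d\mu$, not of $\sup_x\int_{f^{-1}(B_{\rho}(x))}|A^o|^2d\mu$. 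The latter quantity is precisely what Proposition \ref{propS2GLprop5} needs to be below $\epsilon_0$ (it requires $\vn{A^o}^2_{2,[\gamma>0]}<\epsilon_0$, i.e.\ smallness over the \emph{full} ball $B_\rho$ supporting $\gamma$). Your bootstrap quantity $g(t)$ involves $\gamma^s$ and therefore only sees the inner ball, so showing $g(t)\le 2c\epsilon_1<\epsilon_0$ is \emph{not} by itself sufficient to verify the hypothesis of Proposition \ref{propS2GLprop5} on $[\gamma>0]$; the open-closed argument as you state it does not close.

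The missing step is the covering argument: the paper covers $B_\rho(x)$ by $2^4$ balls of radius $\rho/2$, so that the inner-ball estimate $\le\epsilon_1 + c\sigma\rho^{-4}t$ gives $\sup_x\int_{f^{-1}(B_{\rho}(x))}|A^o|^2d\mu \le 2^4(\epsilon_1+c\sigma\rho^{-4}t)$, and then requires $\epsilon_1\le\epsilon_0/(4\cdot 2^4)$ and $c_4\sigma^{-1}\rho^4$ chosen so that the right-hand side stays $\le\epsilon_0/2$. This is exactly what lets the continuity argument close and is what determines the choice of $c_4$ (hence the dependence $c_4=c_4(\epsilon_0)$), so it is not a cosmetic detail. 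Once this covering step is inserted between the integrated inner-ball estimate and the bootstrap closure, your proposal matches the paper's proof. (A minor misreading: in \eqref{eqnS2eqngamma}, $c_{\gamma2}$ is already the constant and the factor $(1+|A|)$ appears outside, so $(c_{\gamma2})^2\le c\rho^{-4}$ and there is no additional $|A|^2|A^o|^2$ term to absorb; that term does not arise in Lemma \ref{lemS2gradests2}.)
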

\begin{proof}
\vspace{-0.7cm}
We first establish an estimate for the tracefree curvature.
Assuming that \eqref{eqnS2propapsmallness} is satisfied on $[0,t]$, $t<t_1$, we combine Proposition
\ref{propS2GLprop5} with the multiplicative Sobolev inequality \eqref{eqnS2msi1Azero} and Lemma
\ref{lemS2gradests2} to obtain
\begin{align*}
\rD{}{t}\int_M|A^o|^2\gamma^sd\mu
 &+ c(1-\text{$\textstyle\sum\delta_i$}- c_2\epsilon_0)
 \int_M(|\nabla_{(2)}A|^2 + |\nabla A|^2|A|^2 + |A|^4|A^o|^2)\gamma^sd\mu
\\
  &\le 
   \frac{c}{\rho^4}\vn{A^o}^4_{2,[\gamma>0]} + \frac{c}{\rho^4} \vn{A}^2_{2,[\gamma>0]}
  \le 
   \frac{c}{\rho^4} \vn{A}^2_{2,[\gamma>0]}.
\end{align*}
where $\epsilon_0 > 0$ is as in Proposition \ref{propS2GLprop5}.
Choosing $\delta_i$ small enough and requiring $\epsilon_0$ to be such that
$c(1-\sum_i\delta_i-c_2\epsilon_0) = \frac{1}{2}$
we have shown
\begin{equation}
\rD{}{t}\int_M|A^o|^2\gamma^sd\mu
  + \frac{1}{2}\int_M(|\nabla_{(2)}A|^2 + |\nabla A|^2|A|^2 + |A|^4|A^o|^2)\gamma^sd\mu
  \le \frac{c}{\rho^4} \vn{A}^2_{2,[\gamma>0]}.
\label{eqnS2apesteq1}
\end{equation}
Now integrate \eqref{eqnS2apesteq1} to obtain
\begin{align}
\int_{f^{-1}(B_{\rho/2}(x))}\hskip-5mm|A^o|^2d\mu
&+ \frac{1}{2}\int_0^t\int_{f^{-1}(B_{\rho/2}(x))}\hskip-5mm 
|\nabla_{(2)}A|^2 + |\nabla A|^2|A|^2 + |A|^4|A^o|^2 d\mu d\tau
\notag\\*
&\le \epsilon_1 + c\sigma\rho^{-4}t.
\label{CieP8e2}
\end{align}
Motivated by the fact that $2^{4}$ balls $B_{\rho/2}$ can be used to cover a ball $B_{\rho}$, we
require $\epsilon_1 \le \frac{\epsilon_0}{4\cdot2^{4}}$.
Since
$0 < t \le \frac{\epsilon_0}{c4\cdot2^4}\rho^4\sigma^{-1} = c_4\sigma^{-1}\rho^4$,
we use \eqref{CieP8e2} and a covering argument to derive
\begin{align*}
\int_{f^{-1}(B_{\rho}(x))}|A^o|^2d\mu
&+ \frac{1}{2}\int_0^t\int_{f^{-1}(B_{\rho}(x))}
|\nabla_{(2)}A|^2 + |\nabla A|^2|A|^2 + |A|^4|A^o|^2 d\mu d\tau
\\
&\le 2^4\Big(\epsilon_0\frac{1}{4\cdot2^4} + \epsilon_0\frac{1}{4\cdot2^4}\Big)
 \le \frac{\epsilon_0}{2}.
\end{align*}
Therefore the smallness assumption of Proposition \ref{propS2GLprop5} holds up to time $t=t_1$ and
\eqref{CieP8e3} follows.
\qed
\end{proof}

For the interested reader we also state an easy corollary.

\begin{cor}
Under the assumptions of Proposition \ref{propS2apest} above,
\begin{align}
\int_0^t\vn{A^o}^4_{\infty,f^{-1}(B_\rho(x_1))} d\tau 
  &\le c[\epsilon_1 + \sigma\rho^{-4}t],\text{ and}
\notag
\\
\int_{f^{-1}(B_{\rho'/2}(x_1))} |A|^2 d\mu \bigg|_{t=\tau}
&\le 
\int_{f^{-1}(B_{\rho'}(x_1))} |A|^2 d\mu \bigg|_{t=0} + c(\sigma(\rho')^{-4})\tau,
\notag
\end{align}
where $0<\rho'\le\rho$ and $\tau\le\min\{c_4\sigma^{-1}(\rho')^4,T^*\}$.
\end{cor}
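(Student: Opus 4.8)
The plan is to harvest both estimates directly from the integrated differential inequality \eqref{eqnS2apesteq1} and its space-integrated consequence \eqref{CieP8e3}, since these already contain essentially all the analytic content; the corollary is a packaging exercise. For the first estimate, I would feed the pointwise-in-time interior estimate Lemma \ref{lemS2int} (with $T = A^o$ and $\gamma$ the same bump function as in Proposition \ref{propS2apest}, adapted to $B_\rho(x_1)$) into the integrand, obtaining for a.e.\ $\tau$
\[
\vn{A^o}^4_{\infty,f^{-1}(B_\rho(x_1))}
\le c\,\vn{A^o}^{2}_{2,[\gamma>0]}\big(\vn{\nabla_{(2)}A^o}^2_{2,[\gamma>0]} + \vn{A^oA^2}^2_{2,[\gamma>0]} + \vn{A^o}^2_{2,[\gamma>0]}\big),
\]
and then bound the $\vn{A^o}^2_{2,[\gamma>0]}$ prefactor uniformly by $\epsilon_0$ (which holds on $[0,t_1]$ by the conclusion of Proposition \ref{propS2apest}). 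Integrating in $\tau$ and recognising the remaining factor as exactly the spacetime integral controlled in \eqref{CieP8e3} — note $\vn{\nabla_{(2)}A^o}^2_2 \le c\vn{\nabla_{(2)}A}^2_2 + \text{lower order}$, and $\vn{A^oA^2}^2_2 = \int|A|^4|A^o|^2$, while the bare $\vn{A^o}^2_2$ term is absorbed into the $c\epsilon_1$ already present — yields $\int_0^t \vn{A^o}^4_{\infty,f^{-1}(B_\rho(x_1))}\,d\tau \le c[\epsilon_1 + \sigma\rho^{-4}t]$.

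For the second estimate I would run the evolution identity for $\int_M |A|^2\gamma^s\,d\mu$ in the style of Lemma \ref{lemS2gradests1}: using $\partial_t A_{ij} = -\nabla_{ij}F + FA_i^pA_{pj}$ and $\partial_t d\mu = HF\,d\mu$, integrating by parts twice, and estimating the resulting terms (which are of the same shape as those already handled in Lemma \ref{lemS2gradests2}) by Young's inequality so that the "good" terms $\int F^2\gamma^s$ and $\int(|\nabla_{(2)}A|^2 + |\nabla A|^2|A|^2 + |A|^4|A^o|^2)\gamma^s$ absorb the dangerous contributions, leaving a remainder of order $(c_{\gamma1})^4\vn{A}^2_{2,[\gamma>0]} = c(\rho')^{-4}\sigma$. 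Integrating in time from $0$ to $\tau$ over the bump function supported in $B_{\rho'}(x_1)$ and dropping the nonnegative spacetime integral gives
\[
\int_{f^{-1}(B_{\rho'/2}(x_1))}|A|^2\,d\mu\bigg|_{t=\tau}
\le \int_{f^{-1}(B_{\rho'}(x_1))}|A|^2\,d\mu\bigg|_{t=0} + c\sigma(\rho')^{-4}\tau,
\]
valid for $\tau \le \min\{c_4\sigma^{-1}(\rho')^4, T^*\}$ so that the smallness hypothesis of Proposition \ref{propS2apest} (with $\rho$ replaced by $\rho' \le \rho$) remains in force.

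The main obstacle is a bookkeeping one rather than a conceptual one: in the first estimate I must make sure the prefactor $\vn{A^o}^2_{2,[\gamma>0]}$ is controlled \emph{before} integrating in time, which requires knowing the local $L^2$-smallness of $A^o$ persists on the whole interval $[0,t_1]$ — this is precisely what Proposition \ref{propS2apest} delivers, so the corollary is genuinely a corollary of that proposition and not merely of the inequality \eqref{eqnS2apesteq1}. A secondary subtlety is converting $\nabla_{(2)}A^o$-norms into $\nabla_{(2)}A$-norms plus controlled lower-order terms (using $A = A^o + \frac{1}{n}Hg$ and interpolation), but this is routine given the estimates already assembled in this section.
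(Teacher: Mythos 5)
Your proposal is correct and follows essentially the paper's strategy for both estimates: Lemma~\ref{lemS2int} applied to $A^o$, fed into the spacetime bound \eqref{CieP8e3}, plus a covering argument for the first; a new local differential inequality integrated in time for the second. Two small refinements are worth noting. For the first estimate, your remark that the bare $\vn{A^o}^2_{2,[\gamma>0]}$ term is ``absorbed into the $c\epsilon_1$ already present'' is not quite the right bookkeeping; the cleaner accounting (which works uniformly up to $t_1$) is to observe that Lemma~\ref{lemS2int} already attaches a factor $c(c_{\gamma 1})$ to this term, so with $c_{\gamma1}=c/\rho$ and the crude bound $\vn{A^o}^2_{2,[\gamma>0]}\le\vn{A}^2_{2,[\gamma>0]}\le\sigma$ it integrates to $c\sigma\rho^{-4}t$, which is absorbed into the \emph{second} summand on the right, not the first. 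For the second estimate, the paper does not recompute the evolution of $\int|A|^2\gamma^s\,d\mu$ from scratch; rather it exploits $|A|^2 = |A^o|^2 + \tfrac12 H^2$, derives the companion inequality
\[
\rD{}{t}\int_MH^2\gamma^sd\mu
  + \frac{1}{2}\int_M\big(|\nabla_{(2)}A|^2 + |\nabla A|^2|A|^2 + |A|^4|A^o|^2\big)\gamma^sd\mu
  \le \frac{c}{\rho^4} \vn{A}^2_{2,[\gamma>0]}
\]
(the $H^2$ analogue of \eqref{eqnS2apesteq1}, built from the first identity in Lemma~\ref{lemS2gradests1}), integrates it, and adds the already-established tracefree bound from Proposition~\ref{propS2apest}. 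Your direct evolution of $\int|A|^2\gamma^s$ is the sum of the two constituent identities and hence leads to the same conclusion; the outcome is identical, but recycling the existing $|A^o|^2$ estimate is a bit more economical and is what the paper intends.
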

\begin{proof}
To obtain the first estimate, combine \eqref{CieP8e3} with Lemma \ref{lemS2int} and use a covering
argument.  For the second, first note that an argument completely analagous to that used to obtain
\eqref{eqnS2apesteq1} also gives
\[
\rD{}{t}\int_MH^2\gamma^sd\mu
  + \frac{1}{2}\int_M(|\nabla_{(2)}A|^2 + |\nabla A|^2|A|^2 + |A|^4|A^o|^2)\gamma^sd\mu
  \le \frac{c}{\rho^4} \vn{A}^2_{2,[\gamma>0]}.
\]
Now the second statement follows from integrating this and combining with the estimate of
Proposition \ref{propS2apest}.
\qed
\end{proof}

We now turn to controlling the higher derivatives of curvature.  Using Lemma
\ref{lemS2evolutionequations}, one may derive the following local integral estimate.  The proof is
long but somewhat standard; see \cite{kuwert2002gfw} for the case of Willmore flow.  One computes
the time derivative of $\vn{(\nabla_{(k)}A)\gamma^s}^2_2$, integrates by parts, estimates the
result, interpolates, and absorbs higher derivative terms on the left.

\begin{prop}
\label{propS2evolutionest1}
Let $f:M^2\times[0,T)\rightarrow\R^{3}$ be a surface diffusion flow and $\gamma$ 
as in \eqref{eqnS2eqngamma}.  Then for a fixed $\theta > 0$ and $s\ge2k+4$,
\begin{align}
&\rD{}{t}\int_M |\nabla_{(k)}A|^2\gamma^sd\mu
 + (2-\theta)\int_M |\nabla_{(k+2)}A|^2\gamma^sd\mu\notag\\*
&\qquad \le c\int_M |A|^2\gamma^{s-4-2k}d\mu
            + c\int_M \left([P_3^{k+2}(A)+P_5^k(A)]*\nabla_{(k)}A\right)\gamma^{s} d\mu,
\notag
\end{align}
where $c=c(c_{\gamma1},c_{\gamma2},s,k,\theta)$.
\end{prop}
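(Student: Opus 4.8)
The plan is to follow the standard Kuwert--Sch\"atzle template for local higher-order integral estimates, adapted to the fourth-order operator $\Delta^2$ appearing in the evolution of $\nabla_{(k)}A$. First I would start from the evolution equation $\partial_t\nabla_{(k)}A = -\Delta^2\nabla_{(k)}A + P_3^{k+2}(A)$ from Lemma \ref{lemS2evolutionequations}, and compute $\rD{}{t}\int_M|\nabla_{(k)}A|^2\gamma^sd\mu$. This produces three groups of terms: the principal term $-2\int_M\IP{\nabla_{(k)}A}{\Delta^2\nabla_{(k)}A}\gamma^sd\mu$, the reaction term $2\int_M\IP{\nabla_{(k)}A}{P_3^{k+2}(A)}\gamma^sd\mu$, and the term coming from $\partial_t d\mu = HF\,d\mu$, which is of the form $\int_M|\nabla_{(k)}A|^2(HF)\gamma^sd\mu$ and can be absorbed into a $P_5^k(A)*\nabla_{(k)}A$-type term after writing $F=\Delta H = P_1^2(A)$.

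Next I would integrate the principal term by parts four times. Moving all four derivatives off $\Delta^2\nabla_{(k)}A$ and onto $\nabla_{(k)}A\,\gamma^s$, the leading piece is $-2\int_M|\nabla_{(k+2)}A|^2\gamma^sd\mu$, which, up to the $\theta$ loss, is exactly the good term on the left-hand side. Every time a derivative falls on $\gamma^s$ rather than on $\nabla_{(k)}A$ one picks up a factor controlled by \eqref{eqnS2eqngamma}, i.e. by $c_{\gamma1}$ or by $c_{\gamma2}(1+|A|)$; these lower-order error terms involve at most $\nabla_{(k+3)}A$ multiplied against fewer derivatives and powers of $\gamma$, and can be handled by Young's inequality. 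The key bookkeeping is that all such terms either (a) are of the schematic type $P_3^{k+2}(A)*\nabla_{(k)}A$ or $P_5^k(A)*\nabla_{(k)}A$ times powers of $\gamma$, or (b) contain a factor of $\nabla_{(k+2)}A$ (or higher) that can be interpolated. For (b) one uses the Gagliardo--Nirenberg-type interpolation inequalities on $M$ (with the cutoff $\gamma$) to bound $\vn{\nabla_{(j)}A\,\gamma^{\cdot}}$ for $k<j<k+2$ by a small multiple of $\vn{\nabla_{(k+2)}A\,\gamma^{s/2}}_2$ plus a large multiple of $\vn{A\,\gamma^{\cdot}}_2$-type terms and lower-order $P$-terms; the requirement $s\ge2k+4$ is precisely what guarantees enough powers of $\gamma$ to carry out these interpolations and keep all exponents nonnegative. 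After interpolation, absorb all the $\nabla_{(k+2)}A$ and higher terms into the left-hand side at the cost of shrinking the coefficient $2$ to $2-\theta$; what survives on the right is the $c\int_M|A|^2\gamma^{s-4-2k}d\mu$ term (the worst lower-order contribution, arising when all available derivatives land on $\gamma$) together with the reaction-type integral $c\int_M([P_3^{k+2}(A)+P_5^k(A)]*\nabla_{(k)}A)\gamma^sd\mu$.

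Finally I would assemble the pieces: the principal term gives $-(2-\theta)\int_M|\nabla_{(k+2)}A|^2\gamma^sd\mu$ on the left after absorption; the reaction term from $P_3^{k+2}(A)$ is already in the stated form; the metric/measure term $\int_M|\nabla_{(k)}A|^2P_1^2(A)\gamma^sd\mu$ is of type $P_5^k(A)*\nabla_{(k)}A$ after one integration by parts to rebalance derivatives (or simply absorbed as $P_3^{k+2}*\nabla_{(k)}A$-type if one counts $\Delta H$ without integrating by parts); and the $\gamma$-error terms are all controlled as above. The main obstacle I expect is the interpolation bookkeeping: one must verify that every intermediate term produced by the four integrations by parts and the $\partial_t\gamma^s$ term has enough powers of $\gamma$ (hence the hypothesis $s\ge 2k+4$) to be interpolated and absorbed, and that the scaling weights on $c_{\gamma1},c_{\gamma2}$ come out correctly so that $c$ depends only on $c_{\gamma1},c_{\gamma2},s,k,\theta$. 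This is exactly the long-but-standard computation referenced in \cite{kuwert2002gfw}, and no new idea beyond careful Young's and Gagliardo--Nirenberg estimates is needed.
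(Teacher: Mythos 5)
Your proposal follows the same approach as the paper's (deliberately brief) proof sketch: differentiate $\int_M|\nabla_{(k)}A|^2\gamma^s\,d\mu$ using $\partial_t\nabla_{(k)}A=-\Delta^2\nabla_{(k)}A+P_3^{k+2}(A)$, integrate the biharmonic term by parts, control the cutoff errors via \eqref{eqnS2eqngamma}, interpolate intermediate-order derivatives (the hypothesis $s\ge 2k+4$ ensuring nonnegative powers of $\gamma$ survive), and absorb the top-order pieces at the cost of $\theta$. One minor slip: integrating $\int_M|\nabla_{(k)}A|^2H\Delta H\,\gamma^s\,d\mu$ by parts once redistributes derivatives but does not change the number of $A$-factors, so this term remains of type $P_3^{k+2}(A)*\nabla_{(k)}A$ rather than becoming $P_5^k(A)*\nabla_{(k)}A$; your parenthetical alternative is the correct classification, and the genuine $P_5^k(A)$ contributions instead come from the commutator/curvature corrections in passing between $\Delta\nabla_{(k)}A$ and $\nabla_{(k+2)}A$.
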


Note that in the above local smallness of curvature is not assumed.  Using this we obtain the
following.

\begin{prop} 
\label{propS2evolutionest2}
Suppose $f:M^2\times[0,T^*]\rightarrow\R^{3}$ is a surface diffusion flow and $\gamma$
is as in \eqref{eqnS2eqngamma}.  Then there is an $\epsilon_0>0$ such that if
\begin{equation*}
\epsilon = \sup_{[0,T^*]}\int_{[\gamma>0]}|A|^2d\mu\le\epsilon_0
\end{equation*}
then for any $t\in[0,T^*]$ we have
\begin{equation}
\label{eq10}
  \begin{split}
&\int_{[\gamma=1]} |A|^2 d\mu + \int_0^t\int_{[\gamma=1]} (|\nabla_{(2)}A|^2 +
|A|^2|\nabla A|^2 + |A|^6) d\mu d\tau \\
&\qquad\qquad\qquad \le \int_{[\gamma > 0]} |A|^2 d\mu\Big|_{t=0}
 + c\epsilon t,
  \end{split}
\end{equation}
where $c=c(c_{\gamma1},c_{\gamma2})$.
\end{prop}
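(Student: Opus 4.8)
The plan is to run an energy/absorption argument on the localised second-fundamental-form energy, exactly in the style of Proposition \ref{propS2apest} but now controlling $\vn{A\gamma^s}_2^2$ rather than $\vn{A^o\gamma^s}_2^2$, and feeding in the smallness of $\vn{A}_{2,[\gamma>0]}^2$ through the multiplicative Sobolev inequality \eqref{eqnS2msi2A}. First I would take Proposition \ref{propS2evolutionest1} with $k=0$ and $s=6$: this gives
\begin{align*}
\rD{}{t}\int_M |A|^2\gamma^sd\mu
 + (2-\theta)\int_M |\nabla_{(2)}A|^2\gamma^sd\mu
&\le c\int_M |A|^2\gamma^{s-4}d\mu
\\*
&\quad + c\int_M \big([P_3^2(A)+P_5^0(A)]*A\big)\gamma^{s}d\mu,
\end{align*}
so the only work is to bound the error integral $\int_M (|\nabla_{(2)}A||A|^2 + |A|^6)\gamma^s d\mu$ (schematically) by a small multiple of $\int_M(|\nabla_{(2)}A|^2 + |A|^2|\nabla A|^2 + |A|^6)\gamma^s d\mu$ plus a harmless $c\,\vn{A}_{2,[\gamma>0]}^2\,\rho^{-4}$-type term.

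The key step is the absorption. For the sixth-order term $\int_M|A|^6\gamma^s d\mu$ I would apply \eqref{eqnS2msi2A} directly, which produces $c\vn{A}_{2,[\gamma>0]}^2\int_M(|\nabla_{(2)}A|^2+|A|^6)\gamma^s d\mu + c(c_{\gamma1})^4\vn{A}_{2,[\gamma>0]}^4$; the first factor is $\le c\epsilon_0$ times the good gradient integral, hence absorbed once $\epsilon_0$ is small, and the last term is bounded by $c(c_{\gamma1})^4\epsilon\cdot\vn{A}_{2,[\gamma>0]}^2 \le c\epsilon$ after renaming constants. For the term $\int_M |\nabla_{(2)}A||A|^2\gamma^s d\mu$, Cauchy-Schwarz splits it as $\theta'\int_M|\nabla_{(2)}A|^2\gamma^s d\mu + c_{\theta'}\int_M |A|^4\gamma^s d\mu$; the first piece is absorbed into $(2-\theta)\int_M|\nabla_{(2)}A|^2\gamma^s d\mu$, and $\int_M|A|^4\gamma^s d\mu$ is handled by interpolation — e.g. $\int_M|A|^4\gamma^s d\mu \le \delta\int_M|A|^6\gamma^s d\mu + c_\delta\int_M|A|^2\gamma^s d\mu$, or better by first estimating it against $\vn{A}_{2,[\gamma>0]}^2$ and $\int_M(|\nabla_{(2)}A|^2+|A|^6)\gamma^sd\mu$ via Lemma \ref{CieL6} / \eqref{eqnS2msi2A}, so that it too gets absorbed up to a $c\epsilon$ remainder. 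The other constituents of $P_3^2(A)$ and $P_5^0(A)$ (lower-order derivative patterns such as $|A||\nabla A|^2$ or $|A|^3|\nabla A|$) are handled identically after one Young's inequality, again landing inside $\int_M(|\nabla_{(2)}A|^2+|A|^2|\nabla A|^2+|A|^6)\gamma^s d\mu$; note that $\int_M|A|^2|\nabla A|^2\gamma^s d\mu$ itself is controlled by \eqref{eqnS2msi2A}, so it is legitimate to treat it as a ``good'' term here.

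After the absorption I obtain, for $\epsilon_0$ sufficiently small,
\begin{equation*}
\rD{}{t}\int_M|A|^2\gamma^sd\mu
 + \tfrac12\int_M(|\nabla_{(2)}A|^2 + |A|^2|\nabla A|^2 + |A|^6)\gamma^sd\mu
 \le c\big[(c_{\gamma1})^4 + (c_{\gamma2})^2\big]\,\vn{A}_{2,[\gamma>0]}^2
 \le c\epsilon,
\end{equation*}
with $c=c(c_{\gamma1},c_{\gamma2})$. Integrating this differential inequality in time from $0$ to $t$, using $\gamma\equiv1$ on $[\gamma=1]$ on the left and $\gamma\le 1$ supported in $[\gamma>0]$ on the initial term on the right, gives exactly \eqref{eq10}. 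The main obstacle I anticipate is purely bookkeeping: making sure every monomial appearing in $P_3^{k+2}(A)$ and $P_5^k(A)$ for $k=0$ can genuinely be absorbed — i.e. that no term forces a power of $|\nabla_{(2)}A|$ higher than $2$ or a power of $|A|$ higher than $6$ after applying Young and the Sobolev inequalities of Lemma \ref{lemS2msiboth} — and tracking that the smallness threshold $\epsilon_0$ can be chosen uniformly (independent of $t$, $\rho$, and $x$) so that all absorptions are simultaneous; this is the same mechanism as in \cite{kuwert2002gfw} and presents no conceptual difficulty, only careful constant-chasing.
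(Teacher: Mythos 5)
Your proposal is correct and follows essentially the same route the paper sketches (the paper explicitly says: integrate Proposition \ref{propS2evolutionest1}, apply \eqref{eqnS2msi2A} to put a factor $\vn{A}^2_{2,[\gamma>0]}\le\epsilon_0$ in front of the high-order terms, and absorb on the left for $\epsilon_0$ small, then integrate in time). One minor bookkeeping slip: the mixed term coming from $P_3^2(A)*A$ is schematically $|\nabla_{(2)}A|\,|A|^3$ rather than $|\nabla_{(2)}A|\,|A|^2$, so Young's inequality lands you directly on $\int|A|^6\gamma^s\,d\mu$ and the extra $|A|^4$ interpolation you introduce is not actually needed.
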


The idea of the proof is to integrate Proposition \ref{propS2evolutionest1}, and then use the
multiplicative Sobolev inequality \eqref{eqnS2msi2A}.  This will introduce a multiplicative factor
of $\vn{A}^2_{2,[\gamma>0]}$ in front of several integrals, which we can then absorb on the left.
Since this is similar to \cite{kuwert2002gfw}, we omit the proof.

We will need the below proposition to obtain interior estimates.  This is proved using
Proposition \ref{propS2evolutionest1}, interpolating the $P$-terms using Corollary 5.5 from
\cite{kuwert2002gfw} and then absorbing.

\begin{prop}
\label{p:prop45}
Suppose $f:M^2\times[0,T]\rightarrow\R^{3}$ is a surface diffusion flow and
$\gamma$ a localisation function as in \eqref{eqnS2eqngamma}.  Then, for $s \ge 2k+4$
the following estimate holds:
\begin{equation}
\label{e:prop45}
  \begin{split}
&\rD{}{t}\int_{M} |\nabla_{(k)}A|^2\gamma^s d\mu
 + \int_{M} |\nabla_{(k+2)}A|^2 \gamma^s d\mu \\
&\quad
  \le c\vn{A}^4_{\infty,[\gamma>0]}\int_M\vn{\nabla_{(k)}A}^2\gamma^sd\mu
       + c\vn{A}^2_{2,[\gamma>0]}(1+\vn{A}^4_{\infty,[\gamma>0]}).
  \end{split}
\end{equation}
\end{prop}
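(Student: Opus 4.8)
The plan is to start from Proposition \ref{propS2evolutionest1}, whose conclusion already isolates the top-order term $(2-\theta)\int_M|\nabla_{(k+2)}A|^2\gamma^sd\mu$ on the left and leaves us with two kinds of error terms: a ``background'' term $c\int_M|A|^2\gamma^{s-4-2k}d\mu$ and a collection of $P$-terms of the form $\int_M\big([P_3^{k+2}(A)+P_5^k(A)]*\nabla_{(k)}A\big)\gamma^sd\mu$. The background term is harmless: since $0\le\gamma\le1$ and $s\ge2k+4$ we have $\gamma^{s-4-2k}\le\gamma^s\cdot\gamma^{-4-2k}\le$ a constant times $\gamma^{s-4-2k}$ supported in $[\gamma>0]$, so it is bounded by $c\vn{A}^2_{2,[\gamma>0]}$, which is already of the form appearing on the right of \eqref{e:prop45}. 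So the real work is entirely in the $P$-terms.

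For the $P$-terms I would split according to the two summands. First, $\int_M\big(P_3^{k+2}(A)*\nabla_{(k)}A\big)\gamma^sd\mu$: here $P_3^{k+2}(A)$ is a sum of contractions of three copies of $A$ carrying $k+2$ derivatives total, so together with the factor $\nabla_{(k)}A$ the integrand is a product of four curvature factors carrying $2k+2$ derivatives, hence ``of type $P_4^{2k+2}$''. The strategy is to use Hölder/Cauchy-Schwarz to peel off two factors of $A$ in $L^\infty$ (producing the $\vn{A}^4_{\infty,[\gamma>0]}$ or at worst $\vn{A}^2_{\infty}$ weight — note the right side of \eqref{e:prop45} has both a $\vn{A}^4_\infty\int|\nabla_{(k)}A|^2$ term and a $\vn{A}^2_2(1+\vn{A}^4_\infty)$ term, giving enough room) and then invoke Corollary 5.5 from \cite{kuwert2002gfw} to interpolate the remaining derivatives of $A$ between $\int_M|\nabla_{(k+2)}A|^2\gamma^sd\mu$ and lower-order quantities controlled by $\vn{A}^2_{2,[\gamma>0]}$ and $\vn{A}^4_{\infty,[\gamma>0]}$; the small constant coming out of that interpolation is then absorbed into the $(2-\theta)$ coefficient on the left. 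The $P_5^k(A)$ term is handled the same way — it is lower order in derivatives (only $k$ total, the same as the $\nabla_{(k)}A$ factor it multiplies, so $2k$ derivatives on six curvature factors), so it needs \emph{less} interpolation; peeling off factors of $A$ in $L^\infty$ immediately brings it to the form $\vn{A}^4_{\infty}\int|\nabla_{(k)}A|^2\gamma^s$ plus terms absorbable into $\vn{A}^2_2\vn{A}^4_\infty$ after one more application of the interpolation inequality.

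The one point requiring care — and the step I expect to be the main obstacle — is bookkeeping the cutoff powers through the interpolation inequality of Corollary 5.5 in \cite{kuwert2002gfw}: each integration by parts used inside that interpolation moves derivatives onto $\gamma$, producing factors of $\nabla\gamma$ and $\nabla_{(2)}\gamma$ which by \eqref{eqnS2eqngamma} cost a factor $(1+|A|)$, and one must check that the hypothesis $s\ge2k+4$ is exactly what guarantees enough spare powers of $\gamma$ so that every term that emerges still has a nonnegative power of $\gamma$ and is dominated by the two quantities on the right of \eqref{e:prop45}. Once that accounting is done and all small-constant top-order terms are absorbed on the left against $(2-\theta)\int|\nabla_{(k+2)}A|^2\gamma^sd\mu$ (choosing $\theta$ small), one reads off \eqref{e:prop45} with a constant $c$ depending only on $c_{\gamma1}$, $c_{\gamma2}$, $s$, $k$. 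Since all of this is standard and close to the Willmore-flow computation, I would present the argument at roughly this level of detail and refer to \cite{kuwert2002gfw} for the interpolation machinery.
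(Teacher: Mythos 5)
Your proposal matches the paper's (one-sentence) proof exactly: start from Proposition \ref{propS2evolutionest1}, dispose of the background term $c\int_M|A|^2\gamma^{s-4-2k}d\mu \le c\vn{A}^2_{2,[\gamma>0]}$ using $0\le\gamma\le1$ and $s\ge2k+4$, then interpolate the $P_3^{k+2}(A)*\nabla_{(k)}A$ and $P_5^k(A)*\nabla_{(k)}A$ terms via Corollary 5.5 of \cite{kuwert2002gfw} and absorb the small top-order contribution into $(2-\theta)\int|\nabla_{(k+2)}A|^2\gamma^sd\mu$. Your emphasis on tracking the cutoff powers through the integration-by-parts inside the interpolation, with $s\ge 2k+4$ supplying just enough spare powers of $\gamma$, is the right thing to worry about and is indeed what that corollary's hypotheses are calibrated to handle.
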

We next state local interior estimates for \eqref{SD}, which hold where and when curvature is
locally small.  Once the estimates \eqref{eq10} and \eqref{e:prop45} are established, the proof
follows similarly to Theorem 3.5 in \cite{kuwert2001wfs}.  Briefly, one utilises cutoff functions in
time as well as the familiar spatial localisation functions $\gamma$ to obtain integral estimates of
Gronwall type, which in turn gives the estimates in $L^2$.  Then, using interpolation (such as in
Lemma \ref{lemS2int}), one obtains the $L^\infty$ estimates.

\begin{thm}[Interior estimates]
Suppose $f:M^2\times(0,T^*]\rightarrow\R^{3}$ is a surface diffusion flow satisfying
\begin{equation*}
\sup_{0<t\le T^*} \int_{f^{-1}(B_\rho(x))} |A|^2d\mu \le \epsilon_0,
\text{ for } T^* \le c\rho^4.
\end{equation*}
Then for any $k\in \N_0$ we have at time $t\in(0,T^*]$ the estimates
\begin{align*}
\vn{\nabla_{(k)}A}_{2,f^{-1}(B_{\rho/2}(x))} &\le c_k\sqrt{\epsilon_0}t^{-\frac{k}{4}}
\\
\vn{\nabla_{(k)}A}_{\infty,f^{-1}(B_{\rho/2}(x))} &\le c_k\sqrt{\epsilon_0}t^{-\frac{k+1}{4}},
\end{align*}
where $c_k = c_k\big(k, \rho, T^*,
\vn{\nabla_{(k)}A}_{2,f^{-1}(B_\rho(x_0))}|_{t=0}\big)$.
\label{thmS2ie}
\end{thm}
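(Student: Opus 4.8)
The plan is to bootstrap from the $k=0$ case, which is already essentially Proposition \ref{propS2apest} (together with its corollary), upward in $k$, using Proposition \ref{p:prop45} as the engine at each stage. The mechanism throughout is the same as in Theorem 3.5 of \cite{kuwert2001wfs}: combine a spatial cutoff $\gamma$ (supported in $f^{-1}(B_\rho(x))$, equal to $1$ on $f^{-1}(B_{\rho/2}(x))$, with $c_{\gamma1}\le c/\rho$ and $c_{\gamma2}\le c(1+|A|)/\rho^2$) with a cutoff in time, and feed the resulting differential inequality into a Gronwall argument. The hypothesis $\sup_{0<t\le T^*}\vn{A}^2_{2,f^{-1}(B_\rho(x))}\le\epsilon_0$ with $T^*\le c\rho^4$ is exactly what is needed: it makes the smallness assumption of Proposition \ref{propS2evolutionest2} available on the whole time interval, so that \eqref{eq10} gives the spacetime bound $\int_0^{T^*}\int_{[\gamma=1]}(|\nabla_{(2)}A|^2+|A|^2|\nabla A|^2+|A|^6)\,d\mu\,d\tau\le c\epsilon_0$, and (via the corollary to Proposition \ref{propS2apest} and Lemma \ref{lemS2int}) it gives $\int_0^{T^*}\vn{A}^4_{\infty,f^{-1}(B_{\rho'}(x))}\,d\tau\le c\epsilon_0$ for $\rho'\le\rho$. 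This last fact is the crucial input: in \eqref{e:prop45} the coefficient $\vn{A}^4_{\infty,[\gamma>0]}$ multiplying $\int_M|\nabla_{(k)}A|^2\gamma^s\,d\mu$ is not pointwise small, but it is small \emph{in $L^1$ in time}, which is all Gronwall needs.

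The inductive step runs as follows. Fix $k\ge0$ and assume the $L^2$ estimate $\vn{\nabla_{(j)}A}_{2,f^{-1}(B_{\rho_j}(x))}\le c_j\sqrt{\epsilon_0}\,t^{-j/4}$ for all $j\le k$ on a slightly larger ball (the radii shrink from $\rho$ toward $\rho/2$ over the finitely many steps, which is harmless). Introduce a monotone time cutoff $\eta(t)$ vanishing for $t$ near $0$ and equal to $1$ for $t\ge t_0$, arranged so that $|\eta'|\le c/t_0$; multiply \eqref{e:prop45} (with localisation radius between $\rho_k$ and $\rho/2$) by an appropriate power of $t$ — specifically one works with $t^{k/2}$ or with $\eta^{2}t^{?}$ so as to produce the decay rate $t^{-k/2}$ in the final $L^2$ bound — and integrate from $0$ to $t$. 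The term $\rD{}{t}\big(\cdot\big)$ produces the boundary value $\int_{[\gamma=1]}|\nabla_{(k)}A|^2\,d\mu\big|_t$ times the weight plus a term involving $\int_0^t(\text{weight})'\int_M|\nabla_{(k)}A|^2\gamma^s\,d\mu\,d\tau$, which is controlled by the induction hypothesis at level $k$ (after a covering argument relating $[\gamma=1]$ to the smaller ball, and after handling the weight's time derivative, which only improves integrability near $\tau=0$). The $\vn{A}^4_\infty\int_M|\nabla_{(k)}A|^2\gamma^s$ term is handled by Gronwall using $\int_0^{T^*}\vn{A}^4_\infty\,d\tau\le c\epsilon_0$, and the last term $c\vn{A}^2_{2,[\gamma>0]}(1+\vn{A}^4_\infty)$ integrates in time against the weight to something bounded by $c\epsilon_0$ (again using the $L^1_t$ bound on $\vn{A}^4_\infty$ and $T^*\le c\rho^4$). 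This yields $\sup_{t\le T^*}t^{k/2}\vn{\nabla_{(k)}A}^2_{2,f^{-1}(B_{\rho/2}(x))}\le c_k\epsilon_0$ plus the spacetime bound $\int_0^{T^*}\tau^{k/2}\vn{\nabla_{(k+2)}A}^2_{2}\,d\tau\le c_k\epsilon_0$; the latter, fed back in at the next stage for the boundary term, closes the induction on the $L^2$ estimates.

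Finally, the $L^\infty$ estimates follow from the $L^2$ ones purely by interpolation: apply Lemma \ref{lemS2int} to the tensor $T=\nabla_{(k)}A$ on $f^{-1}(B_{\rho/2}(x))$, which bounds $\vn{\nabla_{(k)}A}^4_{\infty,[\gamma=1]}$ by $\vn{\nabla_{(k)}A}^2_{2,[\gamma>0]}$ times $\big(\vn{\nabla_{(k+2)}A}^2_{2,[\gamma>0]}+\vn{(\nabla_{(k)}A)A^2}^2_{2,[\gamma>0]}+\vn{\nabla_{(k)}A}^2_{2,[\gamma>0]}\big)$; inserting the already-proved $L^2$ bounds at levels $k$, $k+1$ (for the $A^2$ term, via Hölder and the $L^\infty$ bound on $A$ at one order lower, established in an earlier substep), and $k+2$, together with $t\le T^*\le c\rho^4$, gives $\vn{\nabla_{(k)}A}^4_{\infty,f^{-1}(B_{\rho/2}(x))}\le c_k\epsilon_0^2\, t^{-(k+1)}$, i.e. the claimed estimate. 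I expect the main obstacle to be purely bookkeeping rather than conceptual: correctly choosing the powers of $t$ (and of the time cutoff $\eta$) at each inductive level so that the decay rates $t^{-k/4}$ propagate exactly, and ensuring the constants $c_k$ depend only on the quantities listed — in particular that the initial data enters only through $\vn{\nabla_{(k)}A}_{2,f^{-1}(B_\rho(x_0))}|_{t=0}$, which requires that the $\eta'$-boundary term absorbed at the previous level not reintroduce dependence on higher norms of the initial data. Since every analytic ingredient (Proposition \ref{propS2evolutionest1}, Proposition \ref{propS2evolutionest2}, Proposition \ref{p:prop45}, Lemma \ref{lemS2int}, and the corollary to Proposition \ref{propS2apest}) is already in hand, and the scheme is verbatim that of \cite[Theorem 3.5]{kuwert2001wfs}, we only indicate the argument and omit the detailed computation.
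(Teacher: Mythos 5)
Your proposal takes essentially the same approach as the paper: the paper itself only sketches the proof, invoking \eqref{eq10} and \eqref{e:prop45}, cutoffs in time and space, a Gronwall argument for the $L^2$ estimates (using the $L^1_t$-smallness of $\vn{A}^4_\infty$), and Lemma \ref{lemS2int} to pass to $L^\infty$, all following \cite[Theorem 3.5]{kuwert2001wfs}. Your account matches that scheme point for point, with the only cosmetic slip being the attribution of the bound $\int_0^{T^*}\vn{A}^4_\infty\,d\tau\lesssim\epsilon_0$ partly to the corollary of Proposition \ref{propS2apest} (which controls $\vn{A^o}^4_\infty$); that bound really comes from \eqref{eq10} together with Lemma \ref{lemS2int}, as you also note.
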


Armed with these estimates, we may conclude the following concentration-compactness alternative.
The proof is similar to that of Theorem 1.2 in \cite{kuwert2002gfw}: one defines a time parameter
$t_0$ and corresponding interval $[0,t_0]$, which is the time during which the curvature is locally
small in $L^2$.  Using a continuity argument, one shows that either $t_0 = c_0\rho^4$, in which case
one can conclude the theorem, or that $t_0 = T$.  For the latter case, we then use the interior
estimates and a tiling argument such as can be found in \cite{RH} or \cite{huisken1984fmc} to show
that $T=\infty$, and we again conclude the theorem.

\begin{thm}[Lifespan Theorem]
\label{thmS2lifespan}
Suppose $f:M^2\times[0,T)\rightarrow\R^{3}$ is a surface diffusion flow with $f_0$ smooth.
Then there are constants $\rho, \epsilon_0, c_0>0$
such that if $\rho$ is chosen with
\begin{equation*}
\int_{f^{-1}(B_\rho(x))} |A|^2 d\mu\Big|_{t=0} \le \epsilon_0
\qquad
\text{ for any $x\in\R^{3}$},
\end{equation*}
then the maximal time $T$ satisfies
\begin{equation*}
T \ge c_0\rho^4,
\end{equation*}
and we have the estimate
\begin{equation*}
\int_{f^{-1}(B_\rho(x))} |A|^2 d\mu \le c\epsilon_0
\qquad\text{ for }\qquad
0\le t \le c_0\rho^4.
\end{equation*}
\end{thm}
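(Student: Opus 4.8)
The plan is to run the standard concentration--compactness/continuity argument of Kuwert--Sch\"atzle \cite{kuwert2001wfs,kuwert2002gfw}, using the local $L^2$ estimate \eqref{CieP8e3} from Proposition \ref{propS2apest} as the engine. First I would record that since $f_0$ is smooth and $M$ is compact, $\sup_{x\in\R^3}\int_{f^{-1}(B_\rho(x))}|A|^2d\mu$ tends to $0$ as $\rho\to 0$; in particular, choosing $\rho$ small enough, we can guarantee both $\sup_x\int_{f^{-1}(B_\rho(x))}|A^o|^2d\mu\big|_{t=0}<\epsilon_1$ (the smallness hypothesis \eqref{eqnS2propapsmallness}) and $\sup_x\int_{f^{-1}(B_\rho(x))}|A|^2d\mu\big|_{t=0}\le\tfrac12\epsilon_0$, where $\epsilon_0,\epsilon_1$ are the constants furnished by Proposition \ref{propS2apest} (we may shrink $\epsilon_0$ further as needed below). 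The constant $\sigma$ in Proposition \ref{propS2apest} can then be taken to be $\epsilon_0$ itself on a short time interval, by the continuity argument that follows.

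Next I would set up the continuity argument. Define
\[
t_0 = \sup\Big\{\, t\in[0,T) \ :\ \sup_{x\in\R^3}\int_{f^{-1}(B_\rho(x))}|A|^2d\mu\Big|_{\tau}\le\epsilon_0 \ \text{ for all } \tau\in[0,t] \,\Big\}.
\]
Since $f_0$ is smooth and the flow is smooth on $[0,T)$, the integral $\int_{f^{-1}(B_\rho(x))}|A|^2d\mu$ depends continuously on $t$, so $t_0>0$ and the defining inequality holds (as a closed condition) up to and including $t_0$. On $[0,t_0]$ we may apply Proposition \ref{propS2apest} with $\sigma=\epsilon_0$: estimate \eqref{CieP8e3} gives, for every $x$ and every $t\le t_1 := \min\{c_4\epsilon_0^{-1}\rho^4,\,t_0\}$,
\[
\int_{f^{-1}(B_\rho(x))}|A^o|^2d\mu \le c[\epsilon_1 + \epsilon_0\rho^{-4}t] \le c[\epsilon_1 + c_4\epsilon_0^{-1}\epsilon_0\rho^{-4}\cdot\rho^4] = c[\epsilon_1+c_4].
\]
To control $\int_{f^{-1}(B_\rho(x))}|A|^2d\mu$ rather than just the tracefree part, I would invoke the second estimate of the Corollary following Proposition \ref{propS2apest} (the $H^2$ analogue of \eqref{eqnS2apesteq1}, which is noted there to hold by an identical argument), giving
\[
\int_{f^{-1}(B_{\rho/2}(x))}|A|^2d\mu\Big|_{t} \le \int_{f^{-1}(B_\rho(x))}|A|^2d\mu\Big|_{0} + c\,\epsilon_0\rho^{-4}t \le \tfrac12\epsilon_0 + c\,c_4,
\]
and after a covering argument (the $2^4$ balls $B_{\rho/2}$ covering $B_\rho$, exactly as in the proof of Proposition \ref{propS2apest}) we get $\sup_x\int_{f^{-1}(B_\rho(x))}|A|^2d\mu\big|_t\le \tfrac12\epsilon_0 + c\,c_4$ for $t\le t_1$. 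Since $c_4=c_4(\epsilon_0)$ is, tracing the proof of Proposition \ref{propS2apest}, of the form $\epsilon_0/(c\cdot4\cdot2^4)$, the extra term $c\,c_4$ can be made $\le\tfrac14\epsilon_0$ by choosing the absolute constants appropriately; hence the quantity stays $\le\tfrac34\epsilon_0<\epsilon_0$ on $[0,t_1]$.

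Finally I would close the continuity argument: the strict inequality just obtained, together with smoothness of the flow, forces $t_1<t_0$ to be impossible unless $t_1=c_4\epsilon_0^{-1}\rho^4$ or $t_0=T$. If $t_0<T$, then by definition of $t_0$ we must have $t_1=c_4\epsilon_0^{-1}\rho^4\le t_0<T$; but this already yields $T > c_4\epsilon_0^{-1}\rho^4 =: c_0\rho^4$ with $c_0=c_4\epsilon_0^{-1}$, which is the claimed lower bound, and the estimate \eqref{CieP8e3} together with the covering argument gives $\int_{f^{-1}(B_\rho(x))}|A|^2d\mu\le c\epsilon_0$ on $[0,c_0\rho^4]$ as required. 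If instead $t_0=T$, then the bound $\sup_x\int_{f^{-1}(B_\rho(x))}|A|^2d\mu\le\epsilon_0$ persists on all of $[0,T)$, and re-running the estimates shows $T\ge c_0\rho^4$ a fortiori (if $T$ were smaller than $c_0\rho^4$ one could apply short-time existence to extend past $T$, contradicting maximality). Either way both conclusions follow. The main obstacle I anticipate is purely bookkeeping: one must track the dependence of $c_4$ on $\epsilon_0$ carefully enough to guarantee that the feedback term $c\,c_4$ genuinely reabsorbs below $\epsilon_0$, and one must be slightly careful that the auxiliary hypothesis "$\vn{A}^2_{2,f^{-1}(B_\rho(x))}\le\sigma$ for some finite $\sigma$" of Proposition \ref{propS2apest} is satisfied --- this is automatic here since $M$ is compact and $f$ smooth, with $\sigma$ taken to be the running bound $\epsilon_0$.
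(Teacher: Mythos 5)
Your continuity argument is the right skeleton and matches the paper's sketch, but there are two places where it deviates from what the paper actually sets up, and one of them is a real gap.

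First, a route issue: you pipe everything through Proposition~\ref{propS2apest} and its Corollary, which are estimates for the \emph{tracefree} curvature that then yield the full $|A|^2$ control only after a second pass. The paper supplies Proposition~\ref{propS2evolutionest2} precisely so you do not have to do this: it gives
\[
\int_{[\gamma=1]} |A|^2\, d\mu + \int_0^t\int_{[\gamma=1]} \bigl(|\nabla_{(2)}A|^2 + |A|^2|\nabla A|^2 + |A|^6\bigr)\,d\mu\, d\tau \le \int_{[\gamma>0]} |A|^2\,d\mu\Big|_{t=0} + c\,\epsilon\, t
\]
directly under the running hypothesis $\sup_{[0,T^*]}\int_{[\gamma>0]}|A|^2\,d\mu\le\epsilon_0$, with no auxiliary $|A^o|^2$ smallness condition to track and with both sides in the same $|A|^2$ scale. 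Using this, the continuity argument closes in one step: defining $t_0$ as you do, apply Proposition~\ref{propS2evolutionest2} on $[0,t_0]$ with $\gamma$ a cutoff on $B_\rho(x)$ to get $\int_{f^{-1}(B_{\rho/2}(x))}|A|^2\,d\mu \le \epsilon_0 + c\,\epsilon_0\rho^{-4}t$, and then the same $2^4$-ball cover you use gives the reabsorption. Your $|A^o|^2$ detour is not \emph{wrong} (the Lifespan hypothesis on $|A|^2$ does imply \eqref{eqnS2propapsmallness}), but it doubles the bookkeeping and obscures which constant controls what; the constant chase around $c_4(\epsilon_0)$ that you flag as the main obstacle is an artifact of this detour.

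Second, and this is the genuine gap: your treatment of the case $t_0 = T$. You write that if $T < c_0\rho^4$ one ``could apply short-time existence to extend past $T$, contradicting maximality.'' Short-time existence for \eqref{SD} needs smooth (or at least $C^{4,\alpha}$) data at time $T$, and the persistence of the \emph{local $L^2$ bound} $\sup_x\int_{f^{-1}(B_\rho(x))}|A|^2\,d\mu\le\epsilon_0$ on $[0,T)$ does not by itself give you that. What makes this work, and what the paper explicitly points to, is the Interior Estimates (Theorem~\ref{thmS2ie}): under exactly the hypothesis you have established (local $L^2$ smallness on a parabolic time interval), they provide uniform pointwise bounds $\vn{\nabla_{(k)}A}_{\infty,f^{-1}(B_{\rho/2}(x))}\le c_k\sqrt{\epsilon_0}\,t^{-(k+1)/4}$ for every $k$ and every $x$. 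A tiling/covering argument (covering $f(M)$ by finitely many such balls, which is possible because Proposition~\ref{propS2evolutionest2} also bounds the area) then promotes these to global $C^\infty$ bounds uniform up to $T$, and it is \emph{only after this} that short-time existence applies to extend the flow past $T$ and produce the contradiction. Without invoking Theorem~\ref{thmS2ie} and the tiling step, the extension claim is unjustified, and this is the step the paper singles out as the content of the $t_0=T$ branch.
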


\section{Blowup analysis}

This section is split into several parts.  First we state the required compactness theorem, and
detail the construction of the blowup.  Our primary concern is to demonstrate that under suitable
conditions the blowup is a stationary solution of surface diffusion flow.
Although we do not enjoy an explicit gradient flow of curvature, Proposition
\ref{propMonotoneCurvature} tells us that surface diffusion flow is `almost' a gradient flow of
curvature, in the sense that if the tracefree curvature is initially globally small enough, this
condition is preserved and in fact the $L^2$ norm of the curvature is nonincreasing.
Combining this with our estmates from Section 2, we are able to essentially follow
\cite{kuwert2001wfs}.  For the convenience of the reader, we detail the entire argument.

Observe that by Theorem \ref{thmS2lifespan}, $T<\infty$ implies that the curvature has
concentrated in $L^2$ (see \eqref{eqnS3curvconc}).  Assuming
\eqref{eqS1maintheoremsmallnessrequirement}, we prove that the blowup at a time where the
curvature has concentrated in $L^2$ is a stationary, noncompact, nonumbilic surface satisfying the
assumptions of Theorem \ref{thmS2gaplemma}.
We apply this in the next section, where we contradict finite maximal existence time with Theorem
\ref{thmS2gaplemma} to conclude long time existence.  A further argument strengthens this result to
exponential convergence to spheres, and this finishes the proof of Theorem \ref{thmS1maintheorem}.

\begin{thm}[Theorem 4.2, \cite{kuwert2001wfs}]
\label{C7Tblowup}
Let $f_j:M_j\rightarrow\R^3$ be a sequence of proper immersions, where $M_j$ is a surface without
boundary.  Let
\[
M_j(R) = \{p\in M_j: |f_j(p)| < R\}
\]
and assume the bounds
\begin{align*}
\mu_j(M_j(R)) &\le c(R)\ \ \text{for any $R>0$,}
\\
\vn{\nabla_{(k)}A}_{\infty,M_j} &\le c(k)\ \ \text{for any $k\in\N_0$.}
\end{align*}
Then there exists a proper immersion $\tilde{f} : \tilde{M}\rightarrow\R^3$, where $\tilde{M}$ is
again a surface without boundary, such that after passing to a subsequence we have a representation
\[
  f_j\circ \phi_j = \tilde{f}+u_j\ \ \text{on}\ \ \tilde{M}(j) = \{p\in \tilde{M}: |\tilde{f}(p)|<j\}
\]
with the following properties:
\begin{align}
&\phi_j:\tilde{M}(j) \rightarrow U_j\subset M_j\text{ is diffeomorphic,}
\notag\\
&M_j(R) \subset U_j\text{ if }j\ge j(R),
\notag\\
&u_j\in C^\infty(\tilde{M}(j),\R^3)\text{ is normal along $\tilde{f}$,}
\notag\\
&\vn{\tilde{\nabla}_{(k)}u_j}_{\infty,\tilde{M}(j)}\rightarrow 0\text{  as  }j\rightarrow\infty
\text{ for any $k\in\N_0$}.
\label{C7TblowupE1}
\end{align}
\end{thm}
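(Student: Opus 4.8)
The plan is to reproduce the proof of Theorem~4.2 in \cite{kuwert2001wfs}, which is a Cheeger--Gromov-type compactness statement, in the spirit of Langer's compactness theorem for surfaces and of Hamilton's compactness theorem for Ricci flow. The argument proceeds in four stages: uniform local graph representations of the $M_j$; uniform local finiteness of a covering by graph patches; extraction of a limit via Arzel\`a--Ascoli together with a diagonal subsequence; and the assembly of the manifold $\tilde M$, the diffeomorphisms $\phi_j$, and the normal graphs $u_j$.

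First I would establish uniform local control. Fix $j$ and $p\in M_j$. Since $\vn{A}_{\infty,M_j}\le c(0)$, a standard argument --- comparison with a sphere of radius $c(0)^{-1}$, or the implicit function theorem applied to the nearest-point projection onto the affine tangent plane at $f_j(p)$ --- shows that $f_j$ restricted to a geodesic ball of a fixed radius $r_0=r_0(c(0))$ about $p$ is a graph over a disk $D_{r_0}$ in $T_pf_j$ of a function $g_{j,p}$ with $\vn{g_{j,p}}_{C^1}\le C_0$. Bootstrapping, in these coordinates the intrinsic bounds $\vn{\nabla_{(k)}A}_{\infty,M_j}\le c(k)$ become bounds $\vn{g_{j,p}}_{C^{k+2}(D_{r_0/2})}\le C_k$ for every $k$, with $C_k$ depending only on $c(0),\dots,c(k)$ and hence independent of $j$ and $p$. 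Thus every $M_j$ is uniformly smooth and uniformly graphical at scale $r_0$.

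Second, the $C^1$ bound gives a uniform lower area bound $\mu_j(B_{r_0/2}(p))\ge c_1>0$ for all $p$ and $j$; combined with $\mu_j(M_j(R))\le c(R)$ and properness of $f_j$ (which makes $\overline{M_j(R)}$ compact), a Vitali-type covering argument produces a covering of $M_j(R)$ by at most $N(R)$ of the above graph patches, with $N(R)$ independent of $j$. Third, after discretising the base points and using that all derivatives of the $g_{j,p}$ are uniformly bounded, Arzel\`a--Ascoli yields $C^\infty_{\mathrm{loc}}$ convergence of a subsequence on each patch, and a diagonal argument over the countably many patches and over $R\in\N$ gives limit graph functions. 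The transition maps between overlapping patches of $M_j$ are compositions of graph maps, hence uniformly smooth, and therefore converge as well; gluing the limit patches along the limiting transition maps yields an abstract surface $\tilde M$ without boundary with a proper immersion $\tilde f:\tilde M\to\R^3$ (no point of $\tilde M$ can be a boundary point, since each arises as a $C^\infty$ limit of interior points carrying a full patch of fixed radius, and properness together with the area bounds prevents loss of area at finite distance). Finally, for $j\ge j(R)$ the converging coordinates determine a diffeomorphism $\phi_j:\tilde M(j)\to U_j\subset M_j$ with $M_j(R)\subset U_j$; arranging the limit patches to be graphs over $\tilde f$ in its normal direction makes $u_j:=f_j\circ\phi_j-\tilde f$ normal along $\tilde f$, and the $C^\infty_{\mathrm{loc}}$ convergence of the $g_{j,p}$ translates into $\vn{\tilde\nabla_{(k)}u_j}_{\infty,\tilde M(j)}\to0$ for every $k$, which is \eqref{C7TblowupE1}.

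The main obstacle is the last stage: simultaneously assembling the abstract manifold $\tilde M$ and the diffeomorphisms $\phi_j$ in a globally consistent way. One must verify that the gluing of the limit patches is well defined and yields a second countable manifold, that each $\phi_j$ is a genuine diffeomorphism onto an open set exhausting $M_j(R)$ rather than merely a local one, and that $u_j$ can be taken to be genuinely normal; by comparison, the local estimates of the first stage and the Arzel\`a--Ascoli extraction are routine.
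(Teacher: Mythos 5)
The paper does not prove this theorem: it is imported verbatim, with attribution, as Theorem~4.2 of Kuwert \& Sch\"atzle \cite{kuwert2001wfs}, and the text that follows it in the paper is only an informal gloss (``on any ball $B_R(0)$ the immersion $f_j$ can be written as a normal graph with small norm for $j$ large over a limit immersion''). Your sketch is a reasonable reconstruction of the argument in the cited reference (which in turn leans on Langer-type compactness); it correctly isolates the three ingredients --- uniform local graph representations from the $\vn{\nabla_{(k)}A}_\infty$ bounds, a Vitali-type finite covering from the area bound and the lower area bound for graphs, and Arzel\`a--Ascoli plus a diagonal argument --- and rightly flags the assembly of $\tilde M$ and the diffeomorphisms $\phi_j$ as the delicate step. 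No comparison with a paper-internal proof is possible because none exists here; if you want the details of the gluing and the normalisation making $u_j$ genuinely normal along $\tilde f$, they must be taken from \cite{kuwert2001wfs} (and the sources cited there), not from this paper.
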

The theorem says that on any ball $B_R(0)$ the immersion $f_j$ can be written as a normal graph with
small norm for $j$ large over a limit immersion $\tilde{f}$, after suitably reparametrising with
$\phi_j$.

Let $f:M^2\times[0,T)\rightarrow\R^3$ be a surface diffusion flow defined on a closed surface $M^2$,
where $0 < T \le \infty$.  Define
\[
\eta(r,t) = \sup_{x\in\R^3} \int_{f^{-1}(B_r(x))}|A|^2d\mu.
\] 
Let $r_j$ be an arbitrary decreasing sequence with $r_j \searrow 0$ and assume curvature
concentrates in the sense that for each $j$,
\begin{equation}
t_j = \inf\{t\ge0:\eta(r_j,t) > \epsilon_1\} < T,
\label{eqnS3curvconc}
\end{equation}
where $\epsilon_1 = \epsilon_0c_0$ and $\epsilon_0, c_0$ are as in the Lifespan Theorem.

\begin{lem}With the definitions above, we have
\label{C7L1}
\[
\int_{f^{-1}(B_{r_j}(x))}|A|^2d\mu\bigg|_{t=t_j} \le \epsilon_1
\text{  for any }x\in\R^3,
\]
and
\[
\int_{f^{-1}\overline{(B_{r_j}(x_j))}}|A|^2d\mu\bigg|_{t=t_j} \ge \epsilon_1
\text{  for some }x_j\in\R^3.
\]
\end{lem}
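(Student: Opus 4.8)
The plan is to exploit the definition of $t_j$ as an infimum together with the continuity in $t$ of the quantity $\eta(r_j,t)$ and of the more basic map $t\mapsto \int_{f^{-1}(B_{r_j}(x))}|A|^2d\mu$ for each fixed $x$. First I would note that since $f$ is a smooth flow on a \emph{closed} surface $M^2$ on the open interval $[0,T)$, the second fundamental form $A$ and the measure $d\mu$ depend smoothly on $t$; hence for each fixed $x\in\R^3$ and each fixed $r>0$ the function $t\mapsto \int_{f^{-1}(B_r(x))}|A|^2\,d\mu$ is continuous on $[0,T)$ (one must be slightly careful that the ball $B_r(x)$ is fixed in the ambient $\R^3$ and $f^{-1}(B_r(x))$ varies, but since $f_t\to f_{t_0}$ in $C^1$ and $M$ is compact, dominated convergence applies). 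Taking a supremum over $x$ of a family of functions that are equicontinuous on $[0,t_j]$ (again using compactness of $M$ and smoothness of the flow to get a modulus of continuity uniform in $x$), the function $t\mapsto\eta(r_j,t)$ is itself continuous on $[0,T)$.

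For the first inequality: by definition $t_j=\inf\{t\ge 0:\eta(r_j,t)>\epsilon_1\}$, so for every $t<t_j$ we have $\eta(r_j,t)\le\epsilon_1$, i.e. $\int_{f^{-1}(B_{r_j}(x))}|A|^2d\mu\le\epsilon_1$ for all $x$. Letting $t\nearrow t_j$ and invoking continuity of $t\mapsto\int_{f^{-1}(B_{r_j}(x))}|A|^2d\mu$ at $t=t_j$ gives $\int_{f^{-1}(B_{r_j}(x))}|A|^2d\mu|_{t=t_j}\le\epsilon_1$ for every $x\in\R^3$, which is the first claim. (If $t_j=0$ the statement is the smallness hypothesis \eqref{eqnS2propapsmallness}/\eqref{eqS1maintheoremsmallnessrequirement} at the initial time, with $\epsilon_1$ chosen accordingly.)

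For the second inequality: again by the infimum definition, there is a sequence $s_k\searrow t_j$ (or a sequence approaching $t_j$ from above, noting $t_j<T$ so such $s_k$ exist in the domain) with $\eta(r_j,s_k)>\epsilon_1$, and for each $k$ a point $y_k\in\R^3$ with $\int_{f^{-1}(B_{r_j}(y_k))}|A|^2d\mu|_{t=s_k}>\epsilon_1$. The curvature is uniformly bounded near $t_j$, so the integrals in question are supported — up to negligible mass — on a bounded region of $\R^3$ (since $M_{t}$ stays in a fixed compact set for $t$ near $t_j$, as $M$ is closed and the flow is smooth there); hence the $y_k$ may be taken in a fixed compact subset of $\R^3$, and after passing to a subsequence $y_k\to x_j$ for some $x_j\in\R^3$. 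Passing to the limit $k\to\infty$, using $s_k\to t_j$ and the joint continuity, and enlarging the open ball to the \emph{closed} ball $\overline{B_{r_j}(x_j)}$ to absorb the limit of the slightly displaced balls $B_{r_j}(y_k)$ (any point in the $\limsup$ of the $B_{r_j}(y_k)$ lies in $\overline{B_{r_j}(x_j)}$), we obtain $\int_{f^{-1}\overline{(B_{r_j}(x_j))}}|A|^2d\mu|_{t=t_j}\ge\epsilon_1$.

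The main obstacle I anticipate is the careful handling of the supremum over the noncompact parameter space $\R^3$: one needs that the relevant mass is concentrated in a bounded region (so that the sup is effectively over a compact set and one can extract convergent subsequences of the near-optimal centres $y_k$), and one needs a modulus of continuity for $t\mapsto\int_{f^{-1}(B_{r_j}(x))}|A|^2d\mu$ that is uniform in $x$ in order to pass the continuity through the supremum. Both follow from the compactness of the closed surface $M$ and smoothness of the flow on $[0,T)$, but this is the point that requires care; the rest is a routine infimum/continuity argument together with the open-versus-closed-ball bookkeeping needed to make the two estimates consistent.
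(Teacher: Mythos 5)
Your proof is correct and follows essentially the same route as the paper: the first inequality comes from the infimum definition of $t_j$ together with continuity in $t$, and the second from choosing near-optimal centres at times $s_k\searrow t_j$, extracting a convergent subsequence $y_k\to x_j$ (compactness of the evolving closed surface keeps the centres in a bounded set), and passing to the limit to land on the closed ball. The paper's only cosmetic difference is that it simultaneously lets a radius $r_{j+\nu^{-2}}\nearrow r_j$ to handle the open-versus-closed-ball bookkeeping, whereas you enlarge to $\overline{B_{r_j}(x_j)}$ in the limit; both work.
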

\begin{proof}
The first statement is a direct consequence of the definition of $t_j$.  For the second, fix $j$ and
consider a sequence $\nu \rightarrow \infty$, $\nu>1$.  Consider times $t_{j+\nu^{-1}} \searrow t_j$
and radii $r_{j+\nu^{-2}}\nearrow r_j$.  Then for each $\nu$ there exists an $x_{j+\nu^{-1}}$ such
that
\[
\int_{f^{-1}(B_{r_{j+\nu^{-2}} }(x_{j+\nu^{-1}}))}|A|^2d\mu\bigg|_{t=t_{j+\nu^{-1}} } \ge \epsilon_1.
\]
Taking $\nu\rightarrow\infty$ in the above equation gives the second statement.
\qed
\end{proof}
We now rescale $f$.  Define immersions
\[
f_j:M^2\times\big[-r_j^{-4}t_j, r_j^{-4}(T-t_j)\big)\rightarrow\R^3,
\qquad
f_j(p,t) = \frac{1}{r_j}\big(f(p,t_j+r_j^4t)-x_j\big).
\]
The sequence of immersions $f_j$ can be thought of as `zooming in' on the assumed curvature
singularity at time $T$.  Let $\eta_j(r,t)$ be $\eta$ with respect to the immersion $f_j$.  Then
from Lemma \ref{C7L1} we have $\eta_j(1,t) \le \epsilon_1$ for $t\le0$ and
\[
\int_{f_j^{-1}\overline{(B_1(0))}} |A|^2d\mu\bigg|_{t=0} \ge \epsilon_1.
\]
The Lifespan Theorem implies $r_j^{-4}(T-t_j) \ge c_0$ for any $j$ and also that
\begin{equation}
\eta_j(1,t) \le \epsilon_0\quad\text{for}\quad0<t\le c_0.
\label{C7E6}
\end{equation}
We apply the interior estimates on parabolic cylinders $B_1(x)\times(t-1,t]$ to obtain
\begin{equation}
\vn{\nabla_{(k)}A}_{\infty,f_j} \le c(k)\quad\text{for}\quad-r_j^{-4}t_j+1\le t\le c_0.
\label{C7E2}
\end{equation}
Surface area is uniformly bounded, from above and below.  Note that in light of Proposition
\ref{propMonotoneCurvature} we could also follow \cite{kuwert2001wfs} and use the local area
estimate of Simon \cite{simon1993esm}.
Using Theorem \ref{C7Tblowup} with the sequence $f_j = f_j(\cdot,0):M^2\rightarrow\R^3$ we obtain a
limit immersion
$\tilde{f}_0:\tilde{M}\rightarrow\R^3$.  Let $\phi_j:\tilde{M}(j)\rightarrow
U_j\subset M^2$ be as in \eqref{C7TblowupE1}.  Then the reparametrisation
\begin{equation}
  f_j(\phi_j,\cdot):\tilde{M}(j)\times[0,c_0]\rightarrow\R^3
\label{C7E3}
\end{equation}
is a surface diffusion flow with initial data
\begin{equation*}
f_j(\phi_j,0) = \tilde{f}_0+u_j:\tilde{M}(j)\rightarrow\R^3.
\end{equation*}
The flows \eqref{C7E3} satisfy the curvature bounds \eqref{C7E2} and have initial data converging
locally in $C^k$ to the immersion $\tilde{f}_0:M\rightarrow\R^3$.  By converting the curvature
bounds to partial derivative bounds in parabolic cylinders we obtain the locally smooth convergence
\begin{equation}
f_j(\phi_j,\cdot) \rightarrow \tilde{f},
\label{C7E5}
\end{equation}
where $\tilde{f}:\tilde{M}\times[0,c_0]\rightarrow\R^3$ is a surface diffusion flow with initial
data $f_0$.

\begin{thm}
Let $f:M^2\times[0,T)\rightarrow\R^3$ be a surface diffusion flow satisfying
\eqref{eqS1maintheoremsmallnessrequirement}.
Then the blowup $\tilde{f}$ as constructed above is stationary.
\label{thmS3bu}
\end{thm}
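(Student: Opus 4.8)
The plan is to show that along the rescaled flow the $L^2$ norm of the tracefree curvature is asymptotically constant on every compact set, and hence its time derivative vanishes in the limit, forcing $\Delta \tilde H \equiv 0$. First I would exploit Proposition~\ref{propMonotoneCurvature}: under \eqref{eqS1maintheoremsmallnessrequirement} the quantity $\int_M |A^o|^2 d\mu$ is monotone nonincreasing along the original flow $f$, bounded below by $0$, hence converges to some limit $\alpha \ge 0$ as $t \to T$. Consequently
\[
\int_0^T \int_M |\Delta H|^2 d\mu\, dt \le 4\left(\int_M |A^o|^2 d\mu\Big|_{t=0} - \alpha\right) < \infty,
\]
so the "total dissipation" is finite. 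This is the engine of the argument: the tail $\int_{T-\delta}^{T} \int_M |\Delta H|^2 d\mu\, dt \to 0$ as $\delta \to 0$.

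Next I would transfer this to the blowup sequence. Fix $R>0$ and a cutoff $\gamma$ supported in $f_j^{-1}(B_R(0))$, equal to $1$ on $f_j^{-1}(B_{R/2}(0))$, built as in \eqref{eqnS2eqngamma} so that $c_{\gamma 1} \sim 1/R$. Under the scaling $f_j(p,t) = r_j^{-1}(f(p, t_j + r_j^4 t) - x_j)$, the scale-invariant dissipation integral $\int \int |\Delta H|^2 d\mu\, dt$ over a fixed parabolic region in the $f_j$-picture equals the corresponding integral in the $f$-picture over a time interval of length $\sim r_j^4 \to 0$ sitting just before $T$; by the previous paragraph this tends to $0$. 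Hence for every fixed $R$ and every fixed $[a,b] \subset (0,c_0]$ (or $[0,c_0]$, using the lifespan bound),
\[
\int_a^b \int_{f_j^{-1}(B_R(0))} |\Delta H|^2 d\mu\, dt \longrightarrow 0 \quad \text{as } j \to \infty.
\]
By the locally smooth convergence \eqref{C7E5}, the integrand converges smoothly on compact sets to $|\Delta \tilde H|^2$, and the measures $d\mu_j$ converge as well; passing to the limit gives $\int_a^b \int_{\tilde f^{-1}(B_R(0))} |\Delta \tilde H|^2 d\tilde\mu\, dt = 0$. Since $R$, $a$, $b$ are arbitrary, $\Delta \tilde H \equiv 0$ on $\tilde M \times [0,c_0]$; that is, $\partial_t \tilde f = (\Delta \tilde H)\tilde\nu \equiv 0$, so $\tilde f$ is stationary, as claimed.

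The main obstacle is making the limit-passage in the dissipation integral fully rigorous despite the noncompactness: the convergence $f_j(\phi_j,\cdot) \to \tilde f$ is only \emph{local} and is mediated by the reparametrisations $\phi_j$ and the corrections $u_j$ with $\|\tilde\nabla_{(k)} u_j\|_\infty \to 0$. I would handle this by working on a fixed $\tilde M(R)$, noting that for $j \ge j(R)$ the maps $\phi_j$ are diffeomorphisms onto neighbourhoods containing $f_j^{-1}(B_R(0))$, rewriting the dissipation integral as a pulled-back integral over $\tilde M(R) \times [a,b]$, and then using the $C^k_{loc}$-convergence of $f_j \circ \phi_j$ together with the curvature bounds \eqref{C7E2} (which control all derivatives, hence give equicontinuity and a genuine $C^\infty_{loc}$ limit for $\Delta H$ and $d\mu$) to pass to the limit by dominated convergence. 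A minor point to check is that the interval of good existence is uniform — this is exactly the content of \eqref{C7E6}, $r_j^{-4}(T-t_j)\ge c_0$, which guarantees that $[0,c_0]$ lies in the domain of every $f_j$ — so no time-direction noncompactness arises. Once these bookkeeping issues are dispatched, the finiteness of the total dissipation does all the real work.
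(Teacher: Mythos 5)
Your proposal follows essentially the same route as the paper: Proposition~\ref{propMonotoneCurvature} gives finite total dissipation $\int_0^T\int_M|\Delta H|^2\,d\mu\,dt<\infty$, scale-invariance of $\int\!\int|\Delta H|^2\,d\mu\,dt$ transfers the vanishing tail to the rescaled flows over $[0,c_0]$, and the locally smooth convergence \eqref{C7E5} lets you pass to the limit to conclude $\Delta\tilde H\equiv 0$. The paper simply phrases the vanishing tail as $\int_M|A(t_j)|^2d\mu-\int_M|A(t_j+r_j^4c_0)|^2d\mu\to 0$ for the monotone bounded quantity, but this is the same mechanism you describe, so your argument is correct and not materially different.
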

\begin{proof}
Noting the scale invariance of $\vn{A}^2_2$, we use Proposition \ref{propMonotoneCurvature} to
compute
\begin{align*}
\int_0^{c_0}\int_{\tilde{M}(j)}&|\Delta H(f_j(\phi_j,t))|^2d\mu_{f_j(\phi_j,\cdot)}dt
= \int_0^{c_0}\int_{U_j}|\Delta H_j|^2d\mu_jdt
\\
&\le
   \int_M |A_j(0)|^2d\mu_j
 - \int_M |A_j(c_0)|^2d\mu_j
\\
&= 
   \int_M |A(t_j)|^2d\mu
 - \int_M |A(t_j+r_j^4c_0)|^2d\mu,
\end{align*}
and this converges to zero as $j\rightarrow\infty$.  Therefore $\Delta H(\tilde{f}) \equiv 0$ and
the blowup $\tilde{f}$ is stationary under surface diffusion flow.
\qed
\end{proof}

\begin{rmk}  The above is strictly weaker than the corresponding result for Willmore flow from
\cite{kuwert2001wfs}.  In particular, we need the tracefree curvature to be small at initial time to
obtain a stationary blowup at a finite time curvature singularity, whereas for Willmore flow the
corresponding blowup is always a stationary Willmore surface.  Note that this means one may still
obtain self-similar solutions from a blowup of surface diffusion flow, which one might even expect
given initial data with zero enclosed volume, such as a symmetric figure eight.
\end{rmk}

We now need to show that $\tilde{f}$ is nontrivial.  The arguments below follow
\cite{kuwert2001wfs}, although here the area bound does not present any difficulty.

\begin{lem}
The blowup $\tilde{f}$ constructed above is not a union of planes.
\label{corblowup}
\end{lem}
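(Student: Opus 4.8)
The plan is to show that the curvature which concentrated to produce the blowup persists in the limit, so that $\tilde f$ has nonvanishing second fundamental form somewhere and therefore cannot be flat. The only input needed beyond the construction itself is the lower bound recorded there: at the rescaled time $t=0$,
\[
\int_{f_j^{-1}(\overline{B_1(0)})}|A|^2d\mu\Big|_{t=0}\ge\epsilon_1\qquad\text{for all }j.
\]

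First I would transport this inequality into the graphical description of Theorem \ref{C7Tblowup}. Fix $R>1$. Because $\vn{u_j}_{\infty,\tilde M(j)}\to0$, the identity $f_j(\phi_j,0)=\tilde f_0+u_j$ forces $\phi_j^{-1}\big(f_j^{-1}(\overline{B_1(0)})\big)\subset\tilde M(R)$ for $j$ large; moreover $f_j^{-1}(\overline{B_1(0)})\subset M_j(R)\subset U_j$ once $j\ge j(R)$, so that this preimage really lies inside the diffeomorphic image of $\phi_j$. Changing variables by $\phi_j$ and then enlarging the domain of integration to all of $\tilde M(R)$ gives, for such $j$,
\[
\epsilon_1\le\int_{\tilde M(R)}\big|A(f_j(\phi_j,0))\big|^2d\mu.
\]

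Next I would pass to the limit. Since $\tilde f$ is a proper immersion the set $\tilde M(R)$ is precompact, and $f_j(\phi_j,0)=\tilde f_0+u_j\to\tilde f_0$ in $C^\infty$ there, so the induced metrics and second fundamental forms converge uniformly on $\overline{\tilde M(R)}$. Letting $j\to\infty$ yields
\[
\int_{\tilde M(R)}|A(\tilde f_0)|^2d\mu\ge\epsilon_1>0,
\]
hence $A(\tilde f_0)\not\equiv0$. Since a plane, and hence a union of planes, has vanishing second fundamental form, the time slice $\tilde f(\cdot,0)$, and therefore the flow $\tilde f$, is not a union of planes.

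The one step requiring care is the bookkeeping in the second paragraph: one has to verify that the fixed coordinate ball $\overline{B_1(0)}$ in the target of $f_j$ corresponds, for all large $j$, to a region contained in a single fixed precompact piece $\tilde M(R)$ of the blowup, so that the local smooth convergence of Theorem \ref{C7Tblowup} can be applied and the curvature mass over that fixed piece is genuinely bounded below by $\epsilon_1$ in the limit. Everything else is a direct limiting argument; as noted in the construction, the area bounds present no difficulty here.
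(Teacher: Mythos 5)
Your argument is correct and is essentially the paper's proof, just with the bookkeeping made explicit: the paper likewise combines the lower bound from the second conclusion of Lemma \ref{C7L1} (rescaled so that $\int_{f_j^{-1}(\overline{B_1(0)})}|A|^2d\mu|_{t=0}\ge\epsilon_1$) with the locally smooth convergence of \eqref{C7E5} to conclude that the limit carries curvature mass at least $\epsilon_1$ and hence cannot be flat. The only cosmetic difference is that the paper states the lower bound over $\tilde f^{-1}(\overline{B_1(0)})$ while you obtain it over $\tilde M(R)$ for any fixed $R>1$; both suffice to show $A(\tilde f_0)\not\equiv0$.
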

\begin{proof}
Due to the smooth convergence in \eqref{C7E5} and the second conclusion in Lemma \ref{C7L1} we have
\[
\int_{\tilde{f}^{-1}\overline{(B_1(0))}} |A|^2d\mu \ge \epsilon_1 > 0.
\]
\qed
\end{proof}

\begin{lem}
If the blowup $\tilde{f}$ constructed above contains a compact connected component $C$, then
$\tilde{M} = C$ and $M$ is diffeomorphic to $C$.
\label{C7L2}
\end{lem}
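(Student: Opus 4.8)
The plan is to exploit the two fundamental properties of surface diffusion flow — monotone nonincreasing area and conserved enclosed volume — together with the locally smooth convergence $f_j(\phi_j,\cdot)\to\tilde f$ from \eqref{C7E5}, to show that a compact component $C$ of $\tilde M$ cannot be a proper subset of $\tilde M$, and moreover forces $M$ itself to be (diffeomorphic to) $C$. First I would observe that since $C$ is compact, there is an $R>0$ with $C\subset B_R(0)$, and by the properness of $\tilde f$ there is an $\varepsilon>0$ such that $\tilde f^{-1}(B_{R+\varepsilon}(0))$ has $C$ as a union of connected components with a definite collar of empty space around it. Using the diffeomorphisms $\phi_j:\tilde M(j)\to U_j$ and the smallness of $u_j$ in $C^0$, for $j$ large the set $\phi_j(C)\subset M^2$ is a closed subsurface of $M^2$ whose image under $f_j(\cdot,0)$ lies in $B_{R+\varepsilon}(0)$, with the rest of $f_j(M^2)$ staying outside $B_R(0)$ near that region. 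Since $M^2$ is connected and closed, either $\phi_j(C)=M^2$ (whence $M$ is diffeomorphic to $C$ and, chasing the convergence, $\tilde M=C$), or $\phi_j(C)$ is a proper closed subsurface — but $M^2$ connected means a proper nonempty open-and-closed subset is impossible, so the only way out is that $\partial\phi_j(C)\neq\emptyset$, i.e. the component "escapes" the chart. The main work is to rule this out.

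To rule it out I would argue by a volume/area comparison. The rescaled flows $f_j$ have, by construction and the Lifespan Theorem, uniformly bounded area on bounded sets (as recorded just before Theorem \ref{thmS3bu}) and, crucially, the enclosed volume $\mathrm{Vol}\,M_t$ is \emph{exactly conserved} along the original flow by \eqref{eqvolcomp}, hence along each $f_j$. A compact component $C$ in the blowup limit, being a closed immersed surface in $\R^3$, encloses a nonzero volume $V_C>0$ (or, if one worries about immersed-but-not-embedded components, one uses instead that by Theorem \ref{C7Tblowup} it is a genuine closed immersion with a well-defined, nonzero algebraic enclosed volume after noting Proposition \ref{propMonotoneCurvature} gives embeddedness along the flow, which passes to the limit). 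By the smooth convergence \eqref{C7E5} on $B_R(0)$, for large $j$ the piece $f_j(\phi_j(C),0)$ encloses volume at least $V_C/2$. But the total enclosed volume of $M_{t_j}$ (in the original, unscaled picture) is the fixed constant $\mathrm{Vol}\,M_0$, and under the rescaling $f_j(p,t)=r_j^{-1}(f(p,t_j+r_j^4 t)-x_j)$ the enclosed volume scales by $r_j^{-3}$, so the enclosed volume of $f_j(M^2,0)$ equals $r_j^{-3}\mathrm{Vol}\,M_0\to\infty$. This does not by itself give a contradiction; instead I use it the other way: if $\phi_j(C)\subsetneq M^2$ were a proper subsurface with boundary, then $f_j(\phi_j(C),0)$ is a compact surface \emph{with boundary} contained in $B_{R+\varepsilon}(0)$ whose boundary maps near $\partial B_{R+\varepsilon}(0)$ — but the $C^0$-smallness of $u_j$ and the fact that $C$ is a \emph{union of full components} of $\tilde M(j)$ (it is a connected compact piece of the \emph{limit} manifold $\tilde M$, which has no boundary) means $\phi_j(C)$ has empty boundary in $M^2$ as well. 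Being nonempty, open (via the diffeomorphism onto an open piece of $M_j$) and closed in the connected manifold $M^2$, it must be all of $M^2$. This is the clean route, and it is essentially topological once the collar argument is set up.

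Thus the key steps, in order, are: (1) use compactness of $C$ and properness of $\tilde f$ to surround $C$ by an empty annular region $B_{R+\varepsilon}(0)\setminus B_R(0)$ in the limit; (2) transfer this via $\phi_j$ and the $C^0$-smallness of $u_j$ to conclude that for large $j$, $\phi_j(C)$ is a nonempty, closed (boundaryless) subsurface of $M^2$ that is also open, since $\phi_j$ is a diffeomorphism onto the open set $U_j\subset M^2$ and $C$ is open in $\tilde M$; (3) invoke connectedness of the closed surface $M^2$ to deduce $\phi_j(C)=M^2$, so $M^2$ is diffeomorphic to $C$; (4) push the convergence \eqref{C7E5} back to conclude $\tilde M(j)=\phi_j^{-1}(M^2)$ exhausts $\tilde M$ as $j\to\infty$ with $\tilde M=C$. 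I expect step (2) — carefully arguing that $\phi_j(C)$ is genuinely \emph{closed} in $M^2$, i.e. that no part of $M^2$ accumulates onto $f_j$-images near $\partial B_R(0)$ without being picked up by the chart $\phi_j$ — to be the main obstacle, and it is handled precisely by the inclusion $M_j(R)\subset U_j$ for $j\ge j(R)$ from Theorem \ref{C7Tblowup} together with the collar of empty space: any point of $M^2\setminus\phi_j(C)$ has $f_j$-image outside $B_R(0)$, so $\phi_j(C)$ and its complement are separated, forcing $\phi_j(C)$ clopen.
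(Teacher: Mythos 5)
Your core argument is the same as the paper's: for $j$ large, $C\subset\tilde M(j)$ (by compactness of $C$ and properness of $\tilde f$), $\phi_j(C)$ is a nonempty clopen subset of the connected manifold $M$, hence $\phi_j(C)=M$, giving $M\cong C$ and, since then $\tilde M(j)=\phi_j^{-1}(M)=C$ for all large $j$, also $\tilde M=C$. That two-line topological argument is exactly what the paper records. However, the scaffolding you wrap around it is both unnecessary and in places incorrect. The ``collar of empty space'' around $C$ need not exist: compactness of $C$ only bounds $\tilde f(C)$, and nothing prevents another (necessarily noncompact) component of $\tilde M$ from having image meeting $B_{R+\varepsilon}(0)\setminus B_R(0)$, or indeed $B_R(0)$ itself. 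Likewise, the reason $\phi_j(C)$ is closed in $M$ is simply that $C$ is compact and $\phi_j$ continuous, so $\phi_j(C)$ is compact and hence closed in the Hausdorff space $M$; your ``closed (boundaryless)'' phrasing conflates being a closed manifold with being a closed subset, which are not the same (the open disk is boundaryless but not topologically closed in $\R^2$). Finally, the whole volume/area comparison in your first paragraph is a dead end you yourself abandon, and the nonvanishing of enclosed volume belongs to the \emph{next} lemma (nontriviality of the blowup), not to this one. Trim those digressions and your step (2) collapses to the paper's proof.
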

\begin{proof}
For $j$ sufficiently large, $\phi_j(C)$ is open and closed in $M$.  By the connectedness of $M$ we
have $M=\phi_j(C)$ and thus $\tilde{M} = C$.
\qed
\end{proof}

\begin{thm}[Nontriviality of the blowup]
Suppose $f:M^2\times[0,T)\rightarrow\R^3$ is a surface diffusion flow satisfying
\eqref{eqS1maintheoremsmallnessrequirement}
and let $\tilde{f}$ be the blowup constructed above.  Then none of the components of $\tilde{f}$
are compact.  In particular, the blowup has a component which is a noncompact nonumbilic surface
with $\Delta H \equiv 0$.
\label{BUnontrivial}
\end{thm}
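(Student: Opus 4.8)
The plan is to argue by contradiction. Suppose some connected component $C$ of $\tilde f$ were compact. By Lemma \ref{C7L2} this forces $\tilde M=C$, with $M$ diffeomorphic to $C$; in particular, for every $j$ large enough we have $\tilde M(j)=\tilde M$ and the map $\phi_j$ of Theorem \ref{C7Tblowup} is a diffeomorphism of $\tilde M$ onto $U_j$, which is then open, compact, hence closed, hence all of $M$ by connectedness. Consequently the reparametrisation recorded after \eqref{C7E3}, namely $f_j(\phi_j,0)=\tilde f_0+u_j$ with $\vn{\tilde\nabla_{(k)}u_j}_{\infty,\tilde M}\to0$, yields convergence of the areas: since $\phi_j\colon\tilde M\to M$ is a diffeomorphism, $\mathrm{Area}\big(f_j(\cdot,0)\big)=\mathrm{Area}\big(f_j(\phi_j,0)\big)\longrightarrow\mathrm{Area}(\tilde f_0)<\infty$.

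Against this I would set a uniform positive lower bound on the area of the \emph{unrescaled} flow. Because $f$ satisfies \eqref{eqS1maintheoremsmallnessrequirement}, Proposition \ref{propMonotoneCurvature} gives that each $f(\cdot,t)$ is an embedding, so $M_t$ bounds a region whose volume, by the conservation law \eqref{eqvolcomp}, equals the constant $\mathrm{Vol}\,M_0>0$ (an embedded closed surface encloses positive volume). The isoperimetric inequality in $\R^3$ then gives $\mathrm{Area}(M_t)\ge c_{\mathrm{iso}}(\mathrm{Vol}\,M_0)^{2/3}=:a_0>0$, uniformly in $t$. Since $f_j(\cdot,0)=r_j^{-1}\big(f(\cdot,t_j)-x_j\big)$, this scales to $\mathrm{Area}\big(f_j(\cdot,0)\big)=r_j^{-2}\,\mathrm{Area}(M_{t_j})\ge a_0\,r_j^{-2}\to\infty$ as $r_j\searrow0$, contradicting the finite limit of the previous paragraph. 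Hence no component of $\tilde f$ is compact.

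To extract the distinguished component: by Theorem \ref{thmS3bu} the blowup is stationary, so $\Delta H\equiv0$ on $\tilde M$; and by Lemma \ref{corblowup} $\tilde f$ is not a union of planes, so at least one component $\Sigma$ is not a plane. Each component of the proper immersion $\tilde f$ of the boundaryless surface $\tilde M$ is complete, and a complete connected umbilic surface in $\R^3$ is a plane or a round sphere; since no component of $\tilde f$ is compact, $\Sigma$ can be neither, so it is not umbilic. Thus $\Sigma$ is a noncompact, nonumbilic component of $\tilde f$ with $\Delta H\equiv0$, which is the assertion.

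The one step that carries real weight is the lower area bound for $M_t$, which genuinely relies on embeddedness of the flow---precisely the content of Proposition \ref{propMonotoneCurvature} under \eqref{eqS1maintheoremsmallnessrequirement}. Without it the enclosed volume could degenerate (as one expects for an immersed sphere with figure-eight image), the rescaled areas need not blow up, and a compact, possibly self-similar, blowup would not be ruled out. So I regard the main obstacle as already dealt with upstream, and the work here as assembling the area-scaling contradiction together with the elementary umbilic classification used for the last sentence.
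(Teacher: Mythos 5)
Your argument is correct and essentially the same as the paper's: both use Lemma \ref{C7L2} to reduce to $\tilde M = C$ compact, establish the uniform positive lower area bound via embeddedness (Proposition \ref{propMonotoneCurvature}), volume conservation \eqref{eqvolcomp} and the isoperimetric inequality, and then derive a contradiction from the scaling $\mu(M)|_{t_j} = r_j^2\,\mu_j(M)|_{t=0}$. The only cosmetic difference is that the paper phrases the contradiction as $\mu(M)|_{t=T}=0$ while you phrase it as the rescaled areas blowing up, and you spell out the umbilic classification step that the paper leaves implicit when invoking Lemma \ref{corblowup}.
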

\begin{proof}
Observe that the surface area $\mu(M)$ is uniformly bounded away from zero, first by noting that each
$f_t$ is an embedding (by Proposition \ref{propMonotoneCurvature}) and then by combining the
isoperimetric inequality with the computation \eqref{eqvolcomp}.

Assume that there is a compact component of $\tilde{f}$.  Then Lemma \ref{C7L2} implies that
$\tilde{f}:\tilde{M}\rightarrow\R^3$ has no further components. 
Therefore the surface area of the blowup is bounded.  The measure behaves under scaling by
\[
\mu(M)\Big|_{t=t_j} = r_j^2\mu_j(M)\Big|_{t=0}
\]
and so we have
\[
\mu(M)\Big|_{t=T} = \lim_{j\rightarrow\infty}\mu(M)\Big|_{t=t_j}
                  = \lim_{j\rightarrow\infty}r_j^2\mu_j(M)\Big|_{t=0}
                  = 0.
\]
This is in direct contradiction with the fact that area is uniformly bounded away from zero.  Thus
there are no compact components of $\tilde{f}$, and Lemma \ref{corblowup} gives that there must
exist at least one nonumbilic noncompact component of $\tilde{f}$.
\qed
\end{proof}

\section{Asymptotic behaviour}

Combining the analysis of the previous sections we can finally rule out concentration of curvature
in finite time.

\begin{prop}
Suppose $f:M^2\times[0,T)\rightarrow\R^3$ is a \eqref{SD} flow.  Then there exists a
constant $\epsilon_0>0$ such that if
\[
\int_M|A^o|^2d\mu\bigg|_{t=0} < \epsilon_0 
\]
then $T=\infty$.
\label{propS4LTE}
\end{prop}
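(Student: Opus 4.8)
The plan is to argue by contradiction: suppose $T < \infty$. The strategy is to combine the Lifespan Theorem with the blowup analysis of Section~3 and the Gap Lemma, exactly as the introduction outlines. First I would invoke Theorem~\ref{thmS2lifespan}: if $T < \infty$, then it is impossible for the concentration function $\eta(r,t) = \sup_{x}\int_{f^{-1}(B_r(x))}|A|^2\,d\mu$ to stay below $\epsilon_1 = \epsilon_0 c_0$ on a time interval of length $c_0 r^4$ ending at $T$; hence for every sufficiently small $r>0$ the time $t(r) = \inf\{t\ge 0 : \eta(r,t) > \epsilon_1\}$ satisfies $t(r) < T$. So picking any decreasing sequence $r_j \searrow 0$ we obtain the curvature concentration hypothesis \eqref{eqnS3curvconc}, and the blowup construction of Section~3 applies.

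Next I would feed this into Theorem~\ref{thmS3bu} and Theorem~\ref{BUnontrivial}. Since $f$ satisfies \eqref{eqS1maintheoremsmallnessrequirement}, Proposition~\ref{propMonotoneCurvature} gives that $\int_M |A^o|^2\,d\mu$ is nonincreasing, so $\int_M|A^o|^2\,d\mu \le \epsilon_0$ for all $t$; scale invariance of $\|A^o\|_2^2$ then passes this bound to the rescaled flows $f_j$ and, by the locally smooth convergence \eqref{C7E5}, to the blowup $\tilde f$, so that $\int_{\tilde M}|A^o|^2\,d\mu \le \epsilon_0$. Theorem~\ref{thmS3bu} says $\Delta H \equiv 0$ on $\tilde f$, and Theorem~\ref{BUnontrivial} says $\tilde f$ has a noncompact, nonumbilic component. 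Finally I would verify the second hypothesis of the Gap Lemma (Theorem~\ref{thmS2gaplemma}) for this component: the uniform curvature bounds \eqref{C7E2} together with the uniform local area bound give $\int_{f^{-1}(B_\rho(0))}|A|^2\,d\mu \le C\rho^3$ (or at worst $C\rho^n$ with $n=2$), so $\rho^{-4}\int_{f^{-1}(B_\rho(0))}|A|^2\,d\mu \to 0$. Thus Theorem~\ref{thmS2gaplemma} forces $\tilde f$ to be an embedded plane or sphere — contradicting that it has a noncompact \emph{nonumbilic} component. Hence $T = \infty$.

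The main obstacle, and the step requiring the most care, is confirming the growth hypothesis $\liminf_{\rho\to\infty}\rho^{-4}\int_{f^{-1}(B_\rho(0))}|A|^2\,d\mu = 0$ for the blowup limit. This needs the uniform area growth of $\tilde f$, which comes from the local area estimate (bounded below and above, as noted around \eqref{C7E2}, via Simon's estimate \cite{simon1993esm} or directly from the uniform $C^k$ bounds on $A$ plus the bound $\mu_j(M_j(R)) \le c(R)$ transferred through Theorem~\ref{C7Tblowup}); combined with the pointwise bound $|A|\le c(0)$ from \eqref{C7E2} this gives $\int_{f^{-1}(B_\rho(0))}|A|^2\,d\mu \le c(0)^2\,\mu(\tilde M \cap B_\rho) = O(\rho^n)$, which decays after dividing by $\rho^4$ since $n \le 3 < 4$. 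Everything else is a direct citation of results already established above; the argument is essentially the assembly of Theorems~\ref{thmS2lifespan}, \ref{thmS3bu}, \ref{BUnontrivial}, and \ref{thmS2gaplemma}.
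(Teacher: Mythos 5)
Your proposal is correct and follows essentially the same contradiction argument as the paper: Lifespan Theorem forces curvature concentration, the blowup of Section 3 is stationary (Theorem \ref{thmS3bu}) with small tracefree curvature by scale invariance and Proposition \ref{propMonotoneCurvature}, the Gap Lemma then forces a plane or sphere, contradicting Theorem \ref{BUnontrivial}. The only substantive addition is that you explicitly verify the growth hypothesis $\liminf_{\rho\to\infty}\rho^{-4}\int_{\tilde f^{-1}(B_\rho(0))}|A|^2\,d\mu = 0$ of the Gap Lemma via the uniform $|A|$ bound and polynomial area growth; the paper leaves this implicit, and your reasoning there is sound.
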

\begin{proof}
Assume otherwise, and then by the Lifespan Theorem there exists a $T < \infty$ such that curvature
concentrates at time $T$. Performing a blowup as in Section 3 at $T$ we recover a stationary surface
$\tilde{f}$ with small tracefree curvature due to the scale invariance of $\vn{A^o}_2^2$ and
Proposition \ref{propMonotoneCurvature}.  Now Theorem \ref{thmS2gaplemma} implies $\tilde{f}$ must
be a plane or sphere, which contradicts the nontriviality of the blow up, Theorem
\ref{BUnontrivial}.  Thus there does not exist a finite time when curvature concentrates, and so
$T=\infty$.
\qed
\end{proof}

Smooth (sub)convergence to round spheres now follows by using a similar argument, constructively
instead of for the purposes of contradiction.  This is similar to Lemma 5.4 in \cite{kuwert2001wfs},
and so we omit the proof.

\begin{lem}
Suppose $f:M^2\times[0,T)\rightarrow\R^3$ is a surface diffusion flow satisfying
\eqref{eqS1maintheoremsmallnessrequirement}.  Then for any sequence $t_j\nearrow\infty$ there exist
$x_j\in\R^3$ and $\phi_j\in\text{Diff}(M)$ such that, after passing to a subsequence, the immersions
$f(\phi_j,t)-x_j$ converge smoothly to an embedded round sphere.
\label{lemS4smoothconv}
\end{lem}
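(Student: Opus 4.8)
The plan is to run, constructively this time, the blowup/Gap-Lemma argument behind Proposition~\ref{propS4LTE}, but \emph{translating} rather than rescaling. First, the setup. By Proposition~\ref{propS4LTE} we have $T=\infty$, and by Proposition~\ref{propMonotoneCurvature} the function $t\mapsto\int_M|A^o|^2d\mu$ is nonincreasing and nonnegative, hence convergent to some $\ell\ge0$; integrating the differential inequality of that proposition,
\[
\int_0^\infty\!\!\int_M|\Delta H|^2d\mu\,dt\ \le\ 4\int_M|A^o|^2d\mu\Big|_{t=0}\ <\ \infty .
\]
Moreover each $f_t$ is an embedding (Proposition~\ref{propMonotoneCurvature}) enclosing the fixed positive volume $\text{Vol }M_0$ by \eqref{eqvolcomp} and with $\mu(M_t)\le\mu(M_0)$ by \eqref{eqsurfareacomp}, so the isoperimetric inequality bounds $\mu(M_t)$ uniformly below by a positive constant. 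Given $t_j\nearrow\infty$, pick $x_j\in M_{t_j}$ and set $f_j(p,t):=f(p,t_j+t)-x_j$; I will extract a $C^\infty_{\mathrm{loc}}$ limit of the flows $f_j$ on a fixed interval $I\ni0$ and read off the conclusion at $t=0$.

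\emph{Step 1: no concentration of curvature at a fixed scale.} With $\eta(r,t)$ as in the excerpt, I claim there is $\rho_0>0$ with $\sup_{t\ge0}\eta(\rho_0,t)\le\epsilon_0$. If not, choose $\rho_\nu\searrow0$ with $\sup_t\eta(\rho_\nu,t)>\epsilon_1$; since $\eta(\rho_\nu,\cdot)\to0$ uniformly on compact time intervals as $\nu\to\infty$ (the flow being smooth there, with $\mu(f_t^{-1}(B_{\rho_\nu}(x)))\le c\rho_\nu^2$), the first time $s_\nu$ with $\eta(\rho_\nu,s_\nu)=\epsilon_1$ satisfies $s_\nu\to\infty$. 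Running the Section~3 blowup at $(x_\nu,s_\nu)$ with scale $\rho_\nu$ then produces, exactly as in Theorems~\ref{thmS3bu} and \ref{BUnontrivial}, a noncompact, nonumbilic surface with $\Delta H\equiv0$, with $\int|A^o|^2d\mu\le\epsilon_0$ (monotonicity and scale invariance of $\vn{A^o}_2^2$) and $\liminf_R R^{-4}\!\int_{B_R(0)}|A|^2d\mu=0$ (a covering argument from $\eta_\nu(1,\cdot)\le\epsilon_1$); here stationarity comes from $s_\nu\to\infty$ together with $\int_0^\infty\!\int_M|\Delta H|^2<\infty$, and noncompactness from the uniform area lower bound, in place of the ``finite singular time'' used in Section~3. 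This contradicts the Gap Lemma, Theorem~\ref{thmS2gaplemma}.

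\emph{Step 2: compactness and identification of the limit.} By Step~1 and the interior estimates of Theorem~\ref{thmS2ie}, applied on parabolic cylinders $B_{\rho_0}(x)\times(t_j+t-c\rho_0^4,\,t_j+t]$ and covered over $x\in\R^3$, we get $\vn{\nabla_{(k)}A}_{\infty,M_{t_j+t}}\le c(k)$ for every $k$, uniformly in $j$ and in $t\in I$ (for $t$ bounded away from the left endpoint these constants depend only on $k,\rho_0,\epsilon_0$; alternatively one bootstraps via Propositions~\ref{propS2evolutionest2} and \ref{p:prop45}). In particular $\text{diam}\,M_{t_j}$ is uniformly bounded (bounded $|A|$ and bounded area), so $f_j(M)\subset B_D(0)$ for a fixed $D$, and with also $\mu_j(\{|f_j|<R\})\le\mu(M_0)$ the compactness Theorem~\ref{C7Tblowup} applies to $f_j(\cdot,0)$; converting curvature bounds to spatial-derivative bounds on parabolic cylinders, the reparametrised flows $f_j(\phi_j,\cdot)$ converge in $C^\infty_{\mathrm{loc}}$ on $\tilde M\times I$ to a surface diffusion flow $\tilde f$, exactly as between \eqref{C7E3} and \eqref{C7E5}. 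By the smooth convergence $\int_I\!\int_{\tilde M}|\Delta H(\tilde f)|^2d\mu\,dt=\lim_j\int_{t_j+I}\!\int_M|\Delta H|^2d\mu\,dt=0$, so $\tilde f$ is $t$-independent, equal to a stationary surface $\tilde f_0$ with $\int_{\tilde M}|A^o|^2d\mu=\ell\le\epsilon_0$. Finally $\tilde M$ is compact (a complete surface with bounded second fundamental form and finite area $\mu(\tilde M)\le\mu(M_0)$ is compact), so it is not a plane, whence Theorem~\ref{thmS2gaplemma}, whose growth hypothesis is vacuous on a closed surface, forces $\tilde f_0$ to be an embedded round sphere. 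Since $f(\phi_j,t_j)-x_j=f_j(\phi_j,0)\to\tilde f_0$ smoothly, the lemma follows.

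\emph{Expected main obstacle.} The only non-routine point is Step~1: upgrading the Lifespan theorem's bounded-time local control to uniform-in-time control. This requires re-running the stationarity and nontriviality arguments of Section~3 with the spatial scale $\rho_\nu\to0$ as the degenerating parameter and the concentration times tending to $\infty$, and checking carefully that the Lifespan theorem, the finiteness $\int_0^\infty\!\int|\Delta H|^2<\infty$, and the uniform area lower bound still combine to contradict the Gap Lemma. Everything after Step~1 is either a direct citation or a faithful repetition of the blowup construction already set up in Section~3.
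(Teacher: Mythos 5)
Your argument is correct and follows the route the paper has in mind (and which it borrows from Lemma~5.4 of \cite{kuwert2001wfs}): a constructive blowup at the times $t_j$, identification of the limit as a stationary surface via the finiteness of $\int_0^\infty\!\int_M|\Delta H|^2\,d\mu\,dt$ from Proposition~\ref{propMonotoneCurvature}, and the Gap Lemma (Theorem~\ref{thmS2gaplemma}) forcing the limit to be a round sphere. The one point of departure is organisational rather than substantive: you first isolate a uniform ``no concentration at a fixed scale $\rho_0$'' statement (your Step~1, proved by contradiction via the Section~3 blowup with $s_\nu\to\infty$ replacing the finite-time singularity), and only then pass to the $t_j$-limit; the paper's intended proof, following Kuwert--Sch\"atzle, would instead choose a concentration radius $r_j$ at each $t_j$ and show $r_j$ stays bounded away from zero. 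The two are equivalent, and you correctly observe the key feature that distinguishes the present setting from the Willmore case — the volume constraint and preserved embeddedness give a uniform lower area bound, so the radii are automatically controlled and no rescaling is needed in the statement, unlike in \cite{kuwert2001wfs}.
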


The above implies that Proposition \ref{propMonotoneCurvature} may in fact be strengthened to
\[
\int_M |A^o|^2 d\mu \searrow 0\text{ as }t\nearrow\infty.
\]
We finish by proving exponential decay of curvature.

\begin{prop}
Suppose $f:M^2\times[0,T)\rightarrow\R^3$ is a surface diffusion flow satisfying
\eqref{eqS1maintheoremsmallnessrequirement}.  Then there exists a $\lambda > 0$ such that as
$t\nearrow\infty$ the following asymptotic statements hold:
\begin{equation*}
\vn{\nabla_{(k)}A}_\infty \le c^ke^{-\lambda t},
\quad\text{and}\quad
\vn{A^o}_\infty \le c^0e^{-\lambda t},
\end{equation*}
for $k\ge 1$.
\end{prop}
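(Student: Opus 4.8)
The plan is to promote the almost-gradient-flow inequality of Proposition~\ref{propMonotoneCurvature} to genuine exponential decay by exhibiting a spectral gap for the linearised flow at the limiting sphere, and then to bootstrap to all derivatives of $A$ by interpolation against uniform bounds. I would first collect what is already available. By Proposition~\ref{propS4LTE} the flow is global; the enclosed volume $V_0$ is constant by \eqref{eqvolcomp}; and by Lemma~\ref{lemS4smoothconv} and the remark following it, $M_t$ subconverges smoothly, after translation and reparametrisation, to a round sphere, necessarily of the fixed radius $r_0=(3V_0/4\pi)^{1/3}$, while $\vn{A^o}_2^2\searrow0$. Subconvergence upgrades to $\text{dist}_{C^k}\big(M_t,\{\text{round spheres of radius }r_0\}\big)\to0$ for every $k$; applying the interior estimates of Theorem~\ref{thmS2ie} on parabolic cylinders $f^{-1}(B_\rho(x))\times(t-1,t]$ (the $L^2$-smallness of $A$ on sufficiently small balls being available uniformly in $t$), together with smoothness on compact time intervals, then yields uniform bounds $\vn{\nabla_{(k)}A}_{\infty,M_t}\le C_k$ for all $k\in\N_0$ and $t\ge0$, and with the isoperimetric lower area bound the geometry of $M_t$ is uniformly controlled.

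The heart of the argument is a Poincar\'e-type inequality: there exist $t_0<\infty$ and $c_5=c_5(r_0)>0$ with
\[
\int_M|\Delta H|^2d\mu\ \ge\ c_5\int_M|A^o|^2d\mu\qquad\text{for all }t\ge t_0 .
\]
To prove this I would, for $t$ large, write $M_t$ as a normal graph of a function $u=u(\cdot,t)$ --- small in every $C^k$-norm by the previous paragraph --- over a round sphere $S^2_{r_0}(x(t))$, with $x(t)$ chosen so that the degree-$1$ part of $u$ vanishes, and Taylor-expand the two curvature integrals in $u$. The linearisation of $H$ at $S^2_{r_0}$ is $-r_0^{-2}(\bar\Delta+2)u$, and of $\Delta H$ it is $-r_0^{-4}\bar\Delta(\bar\Delta+2)u$, where $\bar\Delta$ denotes the Laplacian of the unit sphere; since $A^o$ and $\Delta H$ vanish on $S^2_{r_0}$, this gives schematically $\int_M|A^o|^2d\mu=a\,r_0^{-2}\vn{(\bar\Delta+2)u}_{L^2(S^2)}^2+R_1$ and $\int_M|\Delta H|^2d\mu=b\,r_0^{-6}\vn{\bar\Delta(\bar\Delta+2)u}_{L^2(S^2)}^2+R_2$, with $a,b>0$ absolute and $R_1,R_2$ cubic and higher in $u$, hence bounded by $o(1)\vn{u}_{W^{4,2}(S^2)}^2$ thanks to the uniform $C^k$-bounds. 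Conservation of enclosed volume forces the degree-$0$ Fourier mode of $u$ to be quadratically small and the choice of $x(t)$ removes the degree-$1$ modes (the kernel of $\bar\Delta+2$), so $u$ is, to leading order, a sum of spherical harmonics of degree $\ell\ge2$; there $\bar\Delta(\bar\Delta+2)$ has eigenvalues $\ell(\ell+1)\big(\ell(\ell+1)-2\big)\ge24$, whence $\vn{\bar\Delta(\bar\Delta+2)u}_{L^2(S^2)}^2\ge36\,\vn{(\bar\Delta+2)u}_{L^2(S^2)}^2$. Feeding this back and absorbing $R_1,R_2$ for $t\ge t_0$ yields the inequality, with $c_5$ comparable to $r_0^{-4}$ as dimensional homogeneity demands. (Closeness to a fixed sphere could alternatively be supplied at each large $t$ from the smallness of $\vn{A^o}_2^2$ via De~Lellis--M\"uller, and the gap itself could instead be obtained by an indirect compactness argument against Lemma~\ref{lemS4smoothconv}.)

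With the gap in hand the proof concludes quickly. Substituting it into Proposition~\ref{propMonotoneCurvature} gives, for $t\ge t_0$,
\[
\rD{}{t}\int_M|A^o|^2d\mu\ \le\ -\tfrac14\int_M|\Delta H|^2d\mu\ \le\ -\tfrac{c_5}{4}\int_M|A^o|^2d\mu ,
\]
so Gr\"onwall yields $\vn{A^o}_2^2\le c\,e^{-\lambda_0 t}$ with $\lambda_0=c_5/4$. Applying Lemma~\ref{lemS2int} with $T=A^o$ and $\gamma\equiv1$ --- the factors $\vn{\nabla_{(2)}A^o}_2$, $\vn{A^o A^2}_2$ and $\vn{A^o}_2$ on its right-hand side all being bounded via the uniform estimates of the first paragraph and the bounded area --- gives $\vn{A^o}_\infty^4\le c\,\vn{A^o}_2^2$, hence $\vn{A^o}_\infty\le c\,e^{-\lambda_0 t/4}$. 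For $k\ge1$ the Codazzi identity $\nabla_jH=2\nabla^iA^o_{ij}$ shows $\nabla_{(k)}H$, and therefore $\nabla_{(k)}A=\nabla_{(k)}A^o+\tfrac12 g\otimes\nabla_{(k)}H$, is controlled by $\nabla_{(k)}A^o$ up to lower-order products of a decaying $A^o$-derivative with bounded curvature; and a multiplicative interpolation inequality on $M_t$, with constant uniform in $t$ by the controlled geometry, gives $\vn{\nabla_{(k)}A^o}_\infty\le c\,\vn{A^o}_\infty^{1/2}\vn{\nabla_{(2k)}A^o}_\infty^{1/2}\le c^k e^{-\lambda_0 t/8}$. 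Taking $\lambda=\lambda_0/8$ and absorbing the remaining constants into $c^k$ completes the proof.

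The step I expect to be the main obstacle is the spectral gap: establishing $\int_M|\Delta H|^2d\mu\ge c_5\int_M|A^o|^2d\mu$ with a constant uniform in $t$, which requires genuine $C^k$-closeness of $M_t$ to a fixed round sphere together with a careful treatment of the nonlinear remainders and of the scaling (degree $0$) and translation (degree $1$) degeneracies of the linearised operator.
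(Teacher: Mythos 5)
Your route is genuinely different from the paper's, and considerably heavier. The paper never needs a spectral gap or a graph representation: it passes Lemma~\ref{lemS2gradests2} with $\gamma\equiv1$ through Propositions~\ref{propS2ks1} and \ref{propS2GLprop5} and keeps the good term, obtaining the unlocalised estimate
\[
\rD{}{t}\int_M|A^o|^2d\mu+\tfrac{1}{100}\int_M\big(|\nabla_{(2)}A|^2+|\nabla A|^2H^2+|A^o|^2H^4\big)d\mu\le0,
\]
i.e.\ the Poincar\'e-type term $\int_M|A^o|^2H^4\,d\mu$ is \emph{already present} on the left before any linearisation. Lemma~\ref{lemS4smoothconv} then supplies a pointwise bound $H^2\ge c_H>0$ for $t\ge t_H$, after which Gr\"onwall gives $\int_M|A^o|^2d\mu\le e^{-2\lambda t}$ with $\lambda=c_H^2/200$, and an induction on Proposition~\ref{propS2evolutionest1} (with $\rho\to\infty$) plus Lemma~\ref{lemS2int} handles all higher derivatives and the $L^\infty$ upgrade. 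Your version replaces that one-line use of $H^2\ge c_H$ by a linearisation of $\int|\Delta H|^2$ and $\int|A^o|^2$ at the round sphere together with a Fourier-mode analysis; this is a perfectly classical alternative (and, as you note, could instead be done indirectly by compactness against Lemma~\ref{lemS4smoothconv}), but it requires a graph representation of $M_t$ over a fixed sphere with the degree-$0$ and degree-$1$ kernel modes eliminated and a careful treatment of the remainders, none of which the paper has to do.

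One technical inaccuracy in the heart of your argument: the linearisation of $A^o$ at the sphere is the tracefree Hessian, $\delta A^o=-(\nabla^2u)^o$, so to leading order
\[
\int_M|A^o|^2d\mu\ \approx\ \tfrac{1}{2r_0^{2}}\int_{S^2}\bar\Delta u\,(\bar\Delta+2)u\,d\bar\mu
\ =\ \tfrac{1}{2r_0^{2}}\sum_{\ell\ge2}a_\ell^2\,\ell(\ell+1)\big(\ell(\ell+1)-2\big),
\]
not $a\,r_0^{-2}\vn{(\bar\Delta+2)u}^2$ as you wrote (you substituted the linearisation of $H$ where you needed that of $A^o$). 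The two quadratic forms are comparable on harmonics of degree $\ell\ge2$, so your final inequality $\int|\Delta H|^2\ge c_5\int|A^o|^2$ with $c_5\sim r_0^{-4}$ survives, but the displayed identity should be corrected before the remainders $R_1,R_2$ are estimated. Also be aware that ``subconvergence upgrades to convergence'' in $C^k$ needs its own argument (e.g.\ monotonicity of $\int|A^o|^2$ plus a continuity/uniqueness-of-limit step); it is true, but not automatic from Lemma~\ref{lemS4smoothconv} alone.
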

\begin{proof}
First note that using an argument similar to that used to prove \eqref{eqnS2apesteq1} one may
establish the estimate
\begin{equation}
\rD{}{t}\int_M |A^o|^2d\mu
 + \frac{1}{100}\int_M \big(|\nabla_{(2)}A|^2 + |\nabla A|^2H^2 + |A^o|^2H^4\big)d\mu \le 0.
\label{eqnS4expconvest}
\end{equation}
Let $\SA = \mu(M_t)\big|_{t=T}$.  Then Lemma \ref{lemS4smoothconv} above implies that the sectional
curvature and mean curvature satisfy
\[
\vn{K}_\infty \longrightarrow \frac{4\pi}{\SA},\quad\text{and}\quad
\vn{H^2}_\infty \longrightarrow  \frac{16\pi}{\SA}
\]
as $t\nearrow\infty$ respectively.  Therefore there exists a $t_H < \infty$ such that
\[
H^2 \ge c_H > 0\text{,  for all }t\ge t_H.
\]
From now on we assume $t\ge t_H$.  Note that we may assume $c_H < 1$.  Using
\eqref{eqnS4expconvest},
\[
\rD{}{t}\int_M|A^o|^2d\mu + \frac{c_H^2}{100}\int_M|\nabla_{(2)}A|^2
 + |\nabla A|^2+|A^o|^2d\mu \le 0.
\]
Integrating gives
\begin{equation}
\int_M |A^o|^2d\mu
 + \frac{c_H^2}{100}\int_t^\infty\int_M|\nabla_{(2)}A|^2+|\nabla A|^2d\mu d\tau
\le e^{-2\lambda t},
\label{eqnEXPconvE1}
\end{equation}
where
$
\lambda = \frac{c_H^2}{200}.
$
Note that we needed Lemma \ref{lemS4smoothconv} for $\int_M|A^o|^2d\mu\big|_{t=T} = 0$ in the
above.  From this estimate and again Lemma \ref{lemS4smoothconv} we can obtain exponential decay.
From Proposition \ref{propS2evolutionest1}, taking $\rho\nearrow\infty$ gives
\[
\rD{}{t}\int_M|\nabla_{(k)}A|^2d\mu + \int_M|\nabla_{(k+2)}A|^2d\mu
\le \int_M \big(P_3^{k+2}(A)+P_5^k(A)\big)*\nabla_{(k)}Ad\mu.
\]
Note that the term $c\vn{A}^2_{2,[\gamma>0]}$ disappeared due to the dependence of the constant on
$\rho$.  From Lemma \ref{lemS4smoothconv} we know that $A$ and all its derivatives remain bounded
as $t\nearrow\infty$, so we estimate
\begin{align*}
\int_M P_2^0(A)*\nabla_{(k+2)}A*\nabla_{(k)}Ad\mu
 &\le \epsilon\int_M|\nabla_{(k+2)}A|^2d\mu + c_\epsilon\int_M|\nabla_{(k)}A|^2d\mu,
\\
\int_M \big(\tilde{P}_3^{k+2}(A)+P_5^m(A)\big)*\nabla_{(k)}Ad\mu
 &\le c\sum_{j=1}^{k+1}\int_M|\nabla_{(j)}A|^2d\mu,
\end{align*}
where the constant is not universal (i.e. depends on derivatives of curvature).
$\tilde{P}_3^{k+2}(A)$ denotes all terms of type $P_3^{k+2}(A)$ that do not contain the $(k+2)$-th
derivative.
We thus obtain
\[
\rD{}{t}\int_M|\nabla_{(k)}A|^2d\mu + \frac{1}{2}\int_M|\nabla_{(k+2)}A|^2d\mu
 \le c\sum_{j=1}^{k+1}\int_M|\nabla_{(j)}A|^2d\mu.
\]
A proof by induction using \eqref{eqnEXPconvE1} then gives
\[
\vn{\nabla_{(k)}A}^2_2+\frac{c_H^2}{100}\int_t^\infty\vn{\nabla_{(k+2)}A}^2_2d\tau
\le
e^{-2\lambda t}.
\]
This gives us the estimates
\[
\vn{A^o}_2 \le ce^{-\lambda t}\text{, and }\vn{\nabla_{(k)}A}_2 \le ce^{-\lambda t}.
\]
Using Lemma \ref{lemS2int} finishes the proof.
\qed
\end{proof}

\bibliographystyle{plain}
\bibliography{mbib}

\begin{thebibliography}{10}

\bibitem{baras1984edi}
P.~Baras, J.~Duchon, and R.~Robert.
\newblock {Evolution D'une Interface par diffusion de surface}.
\newblock {\em Comm. Partial Differential Equations}, 9(4):313--335, 1984.

\bibitem{B10}
S.~Blatt.
\newblock Loss of convexity and embeddedness for geometric evolution equations
  of higher order.
\newblock {\em J. Evol. Equ.}, 10(1):21--27, 2010.

\bibitem{cahn1996che}
J.W. Cahn, C.M. Elliott, and A.~Novick-Cohen.
\newblock {The Cahn-Hilliard equation with a concentration dependent mobility:
  motion by minus the Laplacian of the mean curvature}.
\newblock {\em European J. Appl. Math.}, 7(3):287--301, 1996.

\bibitem{cahn1994sms}
J.W. Cahn and J.E. Taylor.
\newblock {Surface motion by surface diffusion}.
\newblock {\em Acta Metall. Mater.}, 42(4):1045--1063, 1994.

\bibitem{chrusciel1991sge}
P.T. Chru{\'s}ciel.
\newblock {Semi-global existence and convergence of solutions of the
  Robinson-Trautman (2-dimensional Calabi) equation}.
\newblock {\em Comm. Math. Phys.}, 137(2):289--313, 1991.

\bibitem{davi1990mpi}
F.~Dav{\`\i} and M.E. Gurtin.
\newblock {On the motion of a phase interface by surface diffusion}.
\newblock {\em Z. Angew. Math. Phys.}, 41(6):782--811, 1990.

\bibitem{elliott1997erd}
C.M. Elliott and H.~Garcke.
\newblock {Existence results for diffusive surface motion laws}.
\newblock {\em Adv. Math. Sci. Appl}, 7(1):465--488, 1997.

\bibitem{escher98surface}
J.~Escher, U.F. Mayer, and G.~Simonett.
\newblock The surface diffusion flow for immersed hypersurfaces.
\newblock {\em SIAM J. Math. Anal.}, 29(6):1419--1433, 1998.

\bibitem{fife2000mps}
P.C. Fife.
\newblock {Models for phase separation and their mathematics}.
\newblock {\em Electronic Journal of Differential Equations}, 2000(48):1--26,
  2000.

\bibitem{RH}
R.~S. Hamilton.
\newblock Three-manifolds with positive {R}icci curvature.
\newblock {\em J. Differential Geom.}, 17(2):255--306, 1982.

\bibitem{huisken1984fmc}
G.~Huisken.
\newblock {Flow by mean curvature of convex surfaces into spheres}.
\newblock {\em J. Differential Geom.}, 20(1):237--266, 1984.

\bibitem{huisken1999gee}
G.~Huisken and A.~Polden.
\newblock {Geometric evolution equations for hypersurfaces}.
\newblock {\em Calculus of Variations and Geometric Evolution Problems
  (Cetraro, 1996), Springer, Berlin}, pages 45--84, 1999.

\bibitem{Ito1999sdf}
K.~Ito.
\newblock {The surface diffusion flow equation does not preserve the
  convexity}.
\newblock {\em RIMS K\^oky\^uroku Bessatsu}, 1105:10--21, 1999.

\bibitem{kuwert2001wfs}
E.~Kuwert and R.~Schatzle.
\newblock {The Willmore flow with small initial energy}.
\newblock {\em J. Differential Geom.}, 57(3):409--441, 2001.

\bibitem{kuwert2002gfw}
E.~Kuwert and R.~Schatzle.
\newblock {Gradient flow for the Willmore functional}.
\newblock {\em Comm. Anal. Geom.}, 10(2):307--339, 2002.

\bibitem{LY82}
P.~Li and S.T. Yau.
\newblock A new conformal invariant and its applications to the {W}illmore
  conjecture and the first eigenvalue of compact surfaces.
\newblock {\em Invent. Math.}, 69(2):269--291, 1982.

\bibitem{mantegazza2002sge}
C.~Mantegazza.
\newblock {Smooth geometric evolutions of hypersurfaces}.
\newblock {\em Geom. Funct. Anal.}, 12(1):138--182, 2002.

\bibitem{mayer2001nss}
U.F. Mayer.
\newblock {Numerical solutions for the surface diffusion flow in three space
  dimensions}.
\newblock {\em J. Comput. Appl. Math.}, 20(3):361--379, 2001.

\bibitem{MWW11}
J.~McCoy, G.~Wheeler, and G.~Williams.
\newblock Lifespan theorem for constrained surface diffusion flows.
\newblock {\em Math. Z}, 269:147--178, 2011.

\bibitem{michael1973sam}
J.H. Michael and L.M. Simon.
\newblock {Sobolev and mean-value inequalities on generalized submanifolds of
  $R^n$}.
\newblock {\em Comm. Pure Appl. Math}, 26(36):1--379, 1973.

\bibitem{mullins1957ttg}
W.W. Mullins.
\newblock {Theory of thermal grooving}.
\newblock {\em J. Appl. Phys.}, 28:333--9, 1957.

\bibitem{polden:cas}
A.~Polden.
\newblock {\em {Curves and surfaces of total curvature and fourth order
  flows}}.
\newblock PhD thesis, Mathematisches Institut, Uni. Tubingen, 1996.

\bibitem{simon1993esm}
L.~Simon.
\newblock {Existence of surfaces minimizing the Willmore functional}.
\newblock {\em Comm. Anal. Geom}, 1(2):281--326, 1993.

\bibitem{simonett2001wfn}
G.~Simonett.
\newblock {The Willmore Flow for Near Spheres}.
\newblock {\em Differential Integral Equations}, 14(8):1005--1014, 2001.

\bibitem{mythesis}
G.~Wheeler.
\newblock {\em {Fourth order geometric evolution equations}}.
\newblock PhD thesis, University of Wollongong, February, 2010.

\end{thebibliography}

\end{document}